\renewcommand{\H}{\mathrm{H}}
\renewcommand{\S}{\mathbf{S}}
\renewcommand{\phi}{\varphi}
\newcommand{\s}{\mathcal}
\newcommand{\f}{\mathfrak}
\newcommand{\C}{\mathbb{C}}
\newcommand{\F}{\mathbb{F}}
\newcommand{\M}{\mathbf{M}}
\newcommand{\N}{\mathrm{N}}
\newcommand{\Q}{\mathbb{Q}}
\newcommand{\Z}{\mathbb{Z}}
\newcommand{\Fbar}[1]{\bar{\F}_{#1}}
\newcommand{\Qbar}{\bar{\Q}}
\newcommand{\rar}{\rightarrow}
\newcommand{\set}[2]{\{\,#1:#2\,\}}
\newcommand{\angles}[1]{\langle #1\rangle}
\newcommand{\floor}[1]{\lfloor #1\rfloor}
\newcommand{\cm}{\text{,}}
\newcommand{\pd}{\text{.}}
\newcommand{\llbracket}{[\mkern-2mu[}
\newcommand{\rrbracket}{]\mkern-2mu]}
\newcommand{\llparen}{(\mkern-3mu(}
\newcommand{\rrparen}{)\mkern-3mu)}
\newcommand{\dblbrackets}[1]{\llbracket #1 \rrbracket}
\newcommand{\dblparens}[1]{\llparen #1 \rrparen}
\let\div\undefined
\DeclareMathOperator{\div}{div}
\DeclareMathOperator{\End}{End}
\DeclareMathOperator{\Gal}{Gal}
\DeclareMathOperator{\GL}{GL}
\DeclareMathOperator{\SL}{SL}
\DeclareMathOperator{\PGL}{PGL}
\DeclareMathOperator{\PSL}{PSL}
\DeclareMathOperator{\PXL}{PXL}
\DeclareMathOperator{\dist}{dist}
\DeclareMathOperator{\ord}{ord}
\DeclareMathOperator{\im}{im}
\DeclareMathOperator{\Tate}{Tate}
\DeclareMathOperator{\Sturm}{Sturm}
\DeclareMathOperator{\Red}{Red}
\DeclareMathOperator{\modulo}{\,mod}
\newtheorem{theorem}{Theorem}[section] % number like 3.1, 3.2, 3.3, etc.
\newtheorem{proposition}[theorem]{Proposition} % number like 3.1, 3.2, 3.3, etc.
\newtheorem{lemma}[theorem]{Lemma} % number like 3.1, 3.2, 3.3, etc.
   \theoremstyle{definition} % Õhere we change the styleÕ
   \newtheorem{definition}[theorem]{Definition} % numbered with thm
   \theoremstyle{remark} % Õstyle changed againÕ
   \newtheorem{remark}{Remark}
   \newtheorem{conjecture}{Conjecture}
   \newtheorem{example}{Example}
\begin{document}

\title{Hecke stability and weight $1$ modular forms}

\author{George J. Schaeffer}
\email{gschaeff@math.ucla.edu}

\address{University of California, Los Angeles\\
Department of Mathematics\\
520 Portola Plaza\\
Math Sciences Building 6363\\
Mailcode: 155505}
\classification{11F11,11F80,11Y40,12F12}
\thanks{}

\begin{abstract}

The Galois representations associated to weight $1$ newforms over $\Fbar{p}$ are remarkable in that they are unramified at $p$, but the computation of weight $1$ modular forms has proven to be difficult. One complication in this setting is that a weight $1$ cusp form over $\Fbar{p}$ need not arise from reducing a weight $1$ cusp form over $\bar{\Q}$.

In this article we propose a unified {\it Hecke stability method} for computing spaces of weight $1$ modular forms of a given level in all characteristics simultaneously. Our main theorems outline conditions under which a finite-dimensional Hecke module of {\it ratios} of modular forms must consist of genuine modular forms.

We conclude with some applications of the Hecke stability method motivated by the refined inverse Galois problem.

\end{abstract}

\maketitle

%\section*{Notation}

\section*{Introduction and Motivation}

One of the major achievements of modern number theory is the discovery of a correspondence
\[\left\{\begin{array}{c}\text{newforms of weight $k$}\\\text{for $\Gamma_1(N)$ over $\Fbar{p}$}\end{array}\right\}\leftrightarrows\left\{\begin{array}{c}\text{representations $\Gal(\bar{\Q}/\Q)\rar\GL_2(\Fbar{p})$}\\\text{of ``Serre type'' unramified outside $Np$}\end{array}\right\}\]
established by the work of many researchers over the past few decades and codified in theorems of Eichler--Shimura \cite{Shi71}, Deligne \cite{Del68}, Deligne--Serre \cite{DS74}, Khare \cite{Kha06}, and Khare--Wintenberger \cite{KW09}.

Given a newform $f\in\S_k(N;\Fbar{p})$ as above one can construct a number field $K_f$ as follows: Let $\rho_f:\Gal(\bar{\Q}/\Q)\rar\GL_2(\Fbar{p})$ be the representation associated to $f$, and let $\overline{\rho}_f$ be the projectivization of $\rho_f$. Because $\rho_f$ is continuous, the image of $\overline{\rho}_f$ is a {\it finite} subgroup $G\le\PGL_2(\Fbar{p})$, and the fixed field $K_f$ of ${\ker\overline{\rho}_f}$ is a $G$-extension of $\Q$ that is unramified outside $Np$. 

This kind of construction is central to our understanding of the {\it refined inverse Galois problem} for finite subgroups of $\PGL_2(\Fbar{p})$ over $\Q$. It is particularly relevant to constructing $\mathrm{PXL}_2(\F_q)$-extensions of $\Q$ with limited ramification ($\PXL$ stands for ``$\PGL$ or $\PSL$'')---these Galois groups are nonsolvable when $q\ge 4$, so they are inaccessible to the methods of class field theory. 

The weight $1$ case of the correspondence above is unusual for several reasons:

\begin{itemize}\item[I.]{\bf A weight $1$ cusp form of level $N$ over $\Fbar{p}$ need not arise from reducing such a form over $\bar{\Q}$.}

When $p\nmid N$, the reduction map $\S_k(N;\Z[\tfrac{1}{N}])\rar\S_k(N;\F_p)$ is surjective provided that $k\ge 2$, but when $k=1$, surjectivity of reduction mod $p$ can fail for finitely many $p$ per level $N$. The first example of this phenomenon is due to Mestre at $(N,p)=(1429,2)$ \cite{Edi06}.

Surjectivity of $\S_1(N;\Z[\tfrac{1}{N}])\rar\S_1(N;\F_p)$ fails precisely when $\H^1(X_1(N),\underline{\omega}(-\mathrm{cusps}))$ has nontrivial $p$-torsion \cite{Kha07}. The data produced by our method suggest that the torsion of this cohomology group grows rapidly in $[\SL_2(\Z):\Gamma_1(N)]$ (see Section \ref{S4-growth}).

\begin{remark}Occasionally, even if a given $f\in\S_1(N;\F_p)$ does not come from a form in $\S_1(N;\Z[\frac{1}{N}])$, there {\it may} exist an {augmented level} $N'$ such that $f$ is the reduction of a form in $\S_1(N';\Z[\frac{1}{N}])$. This was observed by Buzzard at $(N,p)=(74,3)$ \cite{Buz12}. Mestre's prototype does not lift to a weight $1$ form in characteristic zero at any level.\end{remark}

\item[II.]{\bf Representations associated to weight $1$ newforms over $\Fbar{p}$ are unramified at $p$.}

Accordingly, the number field $K_f$ is unramified at $p$ when $f$ is of weight $1$. This is a theorem of Coleman and Voloch \cite{CV92} with a condition at $p=2$; the condition at $p=2$ was relaxed by Wiese \cite{Wiese2}.

\item[III.]{\bf Such representations potentially have ``large image.''}

Let $f\in\S_1(N;\Fbar{p})$ be a newform and let $\rho_f$ be the associated Galois representation. If $f$ lifts to a newform $F\in\S_1(N';\bar{\Q})$ for some level $N'$ augmenting $N$, then $\rho_f$ lifts to an Artin representation $\rho_{F}:\Gal(\bar{\Q}/\Q)\rar\GL_2(\C)$ of conductor $N'$ by the theorem of Deligne--Serre \cite{DS74}. Thus, $\Gal(K_f/\Q)$ is isomorphic to a finite subgroup of $\PGL_2(\C)$, and in particular, $\Gal(K_f/\Q)$ is solvable unless it is isomorphic to $\mathrm{A}_5$ \cite{Dickson}.

On the other hand, if the newform $f$ does {\it not} lift to a weight $1$ for in characteristic zero at any level, then the image of $\overline{\rho}_f$ is not necessarily isomorphic to a finite subgroup of $\PGL_2(\C)$. Given the $q$-expansion of $f$ to reasonably high precision, one can verify rather quickly that $\Gal(K_f/\Q)$ contains a copy of $\PSL_2(\F_q)$ (as in \cite{Serre} or \cite{Buz12}); if $q\ge4$ then $K_f$ must be a Galois number field with a nonsolvable Galois group, ramified only at primes dividing $N$ (by II.).

Mestre's example yields a $\PGL_2(\F_8)$-extension of $\Q$ ramified only at $1429$. In Section \ref{S4-gross} we will show that there exists a $\PGL_2(\F_{74873})$-extension of $\Q$ ramified only at $7$.

\item[IV.]{\bf Current methods for computing $\S_1(N;\Fbar{p})$ depend on $p$.}

To {\it compute} a space $V$ of modular forms over a ring $R$ is to give an algorithm that produces on input $P\in\Z_{\ge 0}$ a generating set for the image of the {\it truncated $q$-expansion map} $V\rar R\dblbrackets{q}/(q^P)$ (at some fixed cusp). Because of (III.) and because there are already algorithms for computing forms of higher weights \cite{Ste05}, our main focus in this paper is computing spaces of the form $\S_1(N,\chi;F)$ where $F=\bar{\Q}$ or $F=\Fbar{p}$ with $p\nmid N$, and $\chi$ is an odd Dirichlet character of level $N$ taking values in $F$.

As observed by Edixhoven, one can compute $\S_1(N,\chi;\Fbar{p})$ {\it for a given value of $p$} via the exact sequence
\[0\rightarrow\S_1(N,\chi;\Fbar{p})\xrightarrow{f(q)\mapsto f(q^p)}\S_p(N,\chi;\Fbar{p})\xrightarrow{f(q)\mapsto qf'(q)}\S_{p+2}(N,\chi;\Fbar{p})\]
(see \cite{Edi06}), but the complexity of this depends on the choice of $p$---one must compute the auxiliary space $\S_p(N,\chi;\Fbar{p})$ of dimension roughly $\frac{1}{12}p^2$. For this reason and (I.) we are motivated to formulate a method for computing weight $1$ modular forms that is ``characteristic-free.''

\begin{remark}It should also be mentioned that we do not have general dimension formulas for spaces of weight $1$ cusp forms, even over $\C$.\end{remark}

\end{itemize}

In \cite{Edi06}, Edixhoven notes that
\begin{quote}``... There seem to be no tables of mod $p$ modular forms of weight one, and worse, no published algorithm to compute such tables.''\end{quote}

The goal of this article is the development of the Hecke stability method (HSM), a procedure for computing and analyzing weight 1 modular forms in a way that takes the issues above into account. The HSM addresses and---in most cases---solves the problem posed by Edixhoven.

\subsection*{Outline}

In Section \ref{S1} we outline the central ideas of the Hecke stability method and state our main results. Section \ref{S2} contains an analysis of isogeny graphs and the proofs of the Hecke stability theorems (Theorems \ref{HSThm1} and \ref{HSThm2}). In Section \ref{S3} we prove Theorem \ref{DetectionThm} by explaining the practicalities of computing weight $1$ modular forms using Hecke stability.

Finally, in Section \ref{S4} we give some conjectures, examples, and applications that arise from the Hecke stability method.

\subsection*{Acknowledgements}

The author would like to thank Akshay Venkatesh, John Voight, Kevin Buzzard, Frank Calegari, Chandrashekhar Khare, and David Roberts for their input and support.

\section{Hecke stability and main results}\label{S1}

%\noindent{\it Notation.} 

\subsection{Hecke stability in general}\label{S1-general}

%We begin by describing the idea of Hecke stability in a general context.

For a fixed level $N\ge 1$ and a field $F$ in which $N$ is nonzero, we construct the $F$-algebra $\M(N;F)=\bigoplus_{k\ge 0}\M_k(N;F)$ of modular forms for $\Gamma_1(N)$ graded by weight. The space of {\it modular ratios for $\Gamma_1(N)$ over $F$}, denoted $\M^*(N;F)$, is the $\Z$-graded $F$-algebra generated by ratios of homogeneous elements from $\M(N;F)$. We treat $\M(N;F)$ as a subalgebra of $\M^*(N;F)$ in the obvious fashion.

As one might expect, much of the theory of modular forms carries over to the setting of modular ratios (see Section \ref{S2-ratios}). In particular, the action of the Hecke algebra $\mathbb{T}(N;F)$ extends to $\M^*(N;F)$ in a way that is compatible with the grading by weight, and $\M(N;F)$ is a Hecke submodule of $\M^*(N;F)$.

The Hecke stability method depends on characterizing the {\it finite-dimensional} Hecke-stable subspaces $V\subseteq\M^*(N;F)$. The idea is that such spaces {\it ought} to consist of modular forms, but because of certain complications on the supersingular locus of $X_1(N)$ (see Section \ref{S2-structure}) this is not entirely true.

\begin{theorem}\label{HSThm1}Let $N\ge 1$, let $F$ be a field in which $N$ is nonzero, and let $\ell$ be any prime that does not divide $N$ and that is not the characteristic of $F$. Suppose that $V$ is a finite-dimensional space of modular ratios and that the Hecke operator $T_\ell$ acts on $V$.
\begin{itemize}\item[a.]If $\mathrm{char}(F)=0$, then $V$ is a subspace of $\M(N;F)$.
\item[b.]If $\mathrm{char}(F)=p$ for $p>0$, there exists $r\ge 0$ such that $A^{r} V\subseteq \M(N;F)$ where $A$ is the characteristic $p$ Hasse invariant. In other words, if $f\in V$ and $f(\tau)=\infty$ for some $\tau\in X_1(N)(F)$, then $\tau$ is supersingular.
\end{itemize}\end{theorem}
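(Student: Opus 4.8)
I want to show that a finite-dimensional Hecke-stable space $V$ of modular ratios is forced to land inside the ring of holomorphic modular forms (up to clearing denominators by a power of the Hasse invariant in characteristic $p$). The essential mechanism should be that the $T_\ell$-action ``spreads out'' a putative pole of an element $f\in V$ along the $\ell$-isogeny graph: if $f$ has a pole at a point $\tau\in X_1(N)$, then $T_\ell f$ sees the values of $f$ at the points $\ell$-isogenous to $\tau$, and conversely the poles of $f$ propagate through the Hecke correspondence. Since $V$ is finite-dimensional, the set of poles occurring among elements of $V$ (with bounded order) is finite, so the isogeny graph restricted to that set must be closed under the $\ell$-isogeny correspondence in a way that is impossible unless the pole locus is empty — or, in characteristic $p$, supported on the supersingular points, which form a finite set that genuinely is stable under $\ell$-isogeny.

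**Key steps, in order.** First I would set up the divisor-theoretic language for modular ratios on $X_1(N)$ (using the geometric theory developed in Section~\ref{S2-ratios}): an element $f\in\M^*(N;F)$ of weight $k$ is a section of $\underline\omega^{\otimes k}$ over the complement of a finite set, and its divisor of poles $D_f$ is a well-defined effective divisor on the coarse space, supported away from the cusps since ratios of forms are still holomorphic there. Second, I would make precise how $T_\ell$ interacts with poles: writing $T_\ell$ via the two degeneracy maps $\pi_1,\pi_2\colon X_1(N\ell)\to X_1(N)$ (or the correspondence on $X_1(N)$), one gets that the pole divisor of $T_\ell f$ is bounded by the pushforward along the correspondence of the pole divisor of $f$; I want the sharper statement that if $\tau$ is a pole of maximal order for some $f\in V$, then at least one of its $\ell$-isogenous neighbors is again a pole of an element of $V$ (this is where the Hecke-stability of $V$ is used: $T_\ell f\in V$). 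Third — the combinatorial heart — I would analyze the $\ell$-isogeny graph on $X_1(N)(\overline F)$: in characteristic $0$ (or at ordinary points in characteristic $p$) this graph is an infinite $(\ell+1)$-regular ``volcano''-type object with no finite nonempty forward-closed subset, so a finite pole locus must be empty; in characteristic $p$ the only finite stable locus is the supersingular one. Finally I would translate ``pole locus contained in the supersingular locus'' into the clearing-denominators statement: the Hasse invariant $A$ has divisor exactly the (reduced) supersingular divisor, so a sufficiently high power $A^r$ kills all such poles, giving $A^rV\subseteq\M(N;F)$; and in characteristic $0$ this gives $V\subseteq\M(N;F)$ outright.

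**Where I expect the difficulty.** The main obstacle is step three done correctly, i.e.\ controlling the dynamics of the $\ell$-isogeny correspondence on the pole locus and ruling out the possibility that poles ``cycle'' among finitely many ordinary points. Naively, $T_\ell f$ could have its poles cancel, or the correspondence could be many-valued in a way that lets a finite set be preserved; I need a genuinely quantitative argument — tracking either the order of the pole, or a height/degree function on the isogeny graph — to force the pole locus to escape any finite set unless it is supersingular. A secondary subtlety is the behavior at the finitely many elliptic points and at small characteristic (the $p=2,3$ issues with $\underline\omega$ on $X_1(N)$ and with the Hasse invariant), which I would handle by passing to a sufficiently high auxiliary level where $\Gamma_1(N)$ acts freely, proving the statement there, and descending. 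I also need to be careful that the argument is uniform in the choice of $\ell$ and does not secretly require more than one Hecke operator, since the theorem hypothesizes stability under a single $T_\ell$.
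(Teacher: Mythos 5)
Your overall route is the paper's route: propagate poles along the $\ell$-isogeny graph, use the fact that ordinary components are infinite while the supersingular locus is finite, and clear supersingular poles with a power of the Hasse invariant. But there are two genuine gaps. The minor one: your claim that the pole divisor of a modular ratio is ``supported away from the cusps since ratios of forms are still holomorphic there'' is false ($1/\Delta$ is a modular ratio with a pole at the cusp). The paper excludes cuspidal poles by a separate $q$-expansion argument (Lemma \ref{Lemma-No-Cusps}): since $\chi(\ell)\ell^{k-1}\neq 0$, a pole of order $m$ at a cusp becomes a pole of order $\ell m$ under $T_\ell$, so $\{\ord_\tau(T_\ell^n f)\}_n$ is unbounded below and $V$ could not be finite-dimensional.

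The major gap is exactly the one you flag as ``the main obstacle'' and then do not resolve: cancellation in the Hecke sum $(T_\ell f)(E)=\tfrac1\ell\sum_H f(E/H)$. Your proposed fix --- that a pole of \emph{maximal order} must propagate to an $\ell$-isogenous neighbor --- does not work as stated: two or more neighbors of a point can carry poles of the same maximal order whose leading coefficients cancel in the sum, so maximality of the pole order gives you nothing. The paper's key idea runs in the opposite, ``safe'' direction: if a vertex $\tau$ has \emph{exactly one} arc into the pole locus $\Pi(V)$, then no cancellation is possible and $\tau\in\Pi(V)$. This weak closure property is axiomatized as a \emph{polar condition} (Definition \ref{Definition-Polar}), and the real content is then purely combinatorial: Lemmas \ref{Lemma-Polar-Trees}--\ref{Lemma-Combinatorial} show that a nonempty polar condition on an infinite $(\ell+1)$-regular tree or volcano must be infinite (every vertex not in $P$ but adjacent to $P$ must have \emph{two} arcs into $P$, which forces $P$ to branch outward indefinitely). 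Without the polar-condition formulation and these lower-bound lemmas --- or some substitute quantitative argument ruling out a finite ordinary pole locus --- your step three is an assertion, not a proof. (The structure theory identifying ordinary components as infinite trees or volcanoes, via endomorphism rings and Theorem \ref{Theorem-Structure}, also needs to be supplied, but that part is standard and your appeal to it is reasonable.)
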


%\begin{remark}A fortiori, the above conclusions also apply to any finite-dimensional $V\subseteq \M^*(N;F)$ stable under the action of the whole Hecke algebra $\mathbb{T}(N;F)$.\end{remark}

%The above implies that all finite-dimensional Hecke submodules of $\M^*(N;F)$ consist of modular forms, and 

%Because the $q$-expansion of the Hasse invariant $A$ is equal to $1$ at each cusp,
%
%\begin{corollary}Let $f\in\M^*(N;F)$. The following are equivalent:
%\begin{itemize}\item[i.]There is $g\in\M(N;F)$ such that $f(q)=g(q)$ at some (any) cusp of $X_1(N)$, and
%\item[ii.]The smallest Hecke submodule of $\M^*(N;F)$ containing $f$ is finite-dimensional.
%\end{itemize}\end{corollary}

To make the theorem above a practical tool for computing modular forms, we need conditions under which the exponent $r$ in claim (b.) is zero.

\subsection{Hecke stability and computing weight $1$ modular forms}\label{S1-wt1}

In light of (II.) and (III.) from the introduction, we are principally interested in computing spaces of the form $\S_1(N,\chi;F)$ where $F=\bar{\Q}$ or $F=\Fbar{p}$ and $\chi:(\Z/N\Z)^\times\rar F^\times$ is an odd character. Because of (IV.), we want our method to depend as little as possible on the choice of $F$.

%\footnote{idk how I feel about this paragraph}Because $F$ is algebraically closed, the space $\M_1(N;F)$ of modular forms for $\Gamma_1(N)$ over $F$ breaks up as a direct sum of $\mathbb{T}(N;F)$-modules $\bigoplus_\chi\M_1(N,\chi;F)$ where the {\it nebentypus} character $\chi$ ranges over all odd characters $(\Z/N\Z)^\times\rar F^\times$. As in the higher weight cases, each component $\S_1(N,\chi;F)$ of the cuspidal subspace is computed separately.

The {\it Hecke stability method (HSM)} for computing $\S_1(N,\chi;F)$ proceeds as follows: Fix a finite nonempty $\Lambda\subseteq\M_1(N,\chi^{-1};F)-\{0\}$ whose elements can be easily computed; $\Lambda$ could for example consist of explicit weight $1$ Eisenstein series. For each $\lambda\in\Lambda$ there is an injective map
\[[\lambda^{-1}]:\S_2(N,\boldsymbol{1};F)\rar\M_1^*(N,\chi;F):g\mapsto g/\lambda\text{,}\]
where $\M_1^*(N,\chi;F)$ is a space of modular ratios with modularity properties like the modular forms in $\M_1(N,\chi;F)$; in particular, $\M_1(N,\chi;F)=\M(N;F)\cap\M_1^*(N,\chi;F)$. Let $V_\Lambda'(F)=\bigcap_{\lambda\in\Lambda}\im[\lambda^{-1}]$. This is a finite-dimensional subspace of $\M_1^*(N,\chi;F)$ containing $\S_1(N,\chi;F)$. Elements of $V'_\Lambda(F)$ can have poles, but these are limited to the (finite) set \[\mathrm{Z}(\Lambda)=\set{\tau\in X_1(N)(F)}{\text{$\lambda(\tau)=0$ for all $\lambda\in\Lambda$}}\pd\]

Next, fix a prime $\ell$ such that $\ell\nmid N$ and $\ell\ne\mathrm{char}(F)$. The maximal $T_\ell$-stable subspace of $V'_\Lambda(F)$, denoted by $V_{\Lambda,\ell}'(F)$, contains the target space $\S_1(N,\chi;F)$. By Theorem \ref{HSThm1}, if $\mathrm{Z}(\Lambda)$ contains no supersingular points of $X_1(N)(F)$ we have the desired inclusions
\[\S_1(N,\chi;F)\subseteq V_{\Lambda,\ell}'(F)\subseteq \M_1(N,\chi;F)\pd\]

Unfortunately, the assumption that $\mathrm{Z}(\Lambda)$ contains no supersingular points is rather strong and also somewhat expensive to verify in practice (see Remark \ref{Zeros-Remark}). Indeed, if the modular curve $X_0(N)$ has elliptic points, it fails automatically for $F=\Fbar{p}$ when $p\not\equiv 1\modulo 12$. The main focus of Section \ref{S2} is weakening this hypothesis, and the theorem below summarizes the results of our efforts.

\begin{theorem}\label{HSThm2}Let $N\ge 1$, let $F$ be a field in which $N$ is nonzero, and let $\chi:(\Z/N\Z)^\times\rar F^\times$ be an odd character. Fix a prime $\ell$ such that $\ell\nmid N$ and $\ell\ne\mathrm{char}(F)$, let $\Lambda$ be a nonempty finite subset of $\M_1(N,\chi^{-1};F)-\{0\}$, and define $V'_{\Lambda,\ell}(F)$ and $\mathrm{Z}(\Lambda)$ as above.

We have $V_{\Lambda,\ell}'(F)\subseteq\M_1(N,\chi;F)$ if any one of the following conditions holds:

\begin{itemize}\item[i.]The characteristic of $F$ is $0$.

\item[ii.]The characteristic of $F$ is $p>0$ and $\mathrm{Z}(\Lambda)$ contains no supersingular points of $X_1(N)(F)$.

\item[iii.]The characteristic of $F$ is $p>0$ and there exist $M\mid N$ and $r\ge 2$ satisfying $p>\max\{\frac{4}{M}\ell^{4r},4\ell^{2r}\}$ and
\[|\delta_{N,M}\mathrm{Z}(\Lambda)\cap\set{\tau\in X_0(M)(F)^{\mathrm{ss}}}{\text{$\tau$ not elliptic}}|<\ell^{\floor{r/2}}+\ell^{\floor{r/2}-1}\cm\]
where $\delta_{N,M}:X_1(N)(F)\rar X_0(M)(F)$ is a modular degeneracy map (see Section \ref{S2-graphs}).

%\item[iv.]The characteristic of $F$ is $p>0$, the set $\delta_{N,1}\mathrm{Z}(\Lambda)\cap X(1)(F)^\mathrm{ss}$ consists of a single elliptic point $\tau'$, and there is a non-elliptic $\tau\in X(1)(F)$ such that $\Phi_\ell(X,Y)$ has a simple zero at $(j(\tau),j(\tau'))$.
%
%\item[v.]The characteristic of $F$ is $p>0$, The set $\delta_{N,1}\mathrm{Z}(\Lambda)\cap X(1)(F)^\mathrm{ss}$, consists of two elliptic points, $p>4\ell^4$, and there is a non-elliptic $\tau\in X(1)(F)$ and an elliptic $\tau'\in X(1)(F)$ such that $\Phi_\ell(X,Y)$ has a simple zero at $(j(\tau),j(\tau'))$.

\end{itemize}\end{theorem}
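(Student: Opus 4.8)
The plan is to obtain (i) and (ii) immediately from Theorem~\ref{HSThm1}, and to reduce (iii) to a combinatorial statement about the supersingular $\ell$-isogeny graph. In all three cases it suffices to show that every $f\in V'_{\Lambda,\ell}(F)$ is holomorphic: then $f\in\M(N;F)$, and since $f$ already transforms with weight $1$ and character $\chi$, the identity $\M_1(N,\chi;F)=\M(N;F)\cap\M_1^*(N,\chi;F)$ gives $f\in\M_1(N,\chi;F)$. Now $V'_{\Lambda,\ell}(F)$ is finite-dimensional and $T_\ell$-stable by construction, so Theorem~\ref{HSThm1} applies to it. If $\mathrm{char}(F)=0$, part~(a) yields holomorphy at once, proving (i). If $\mathrm{char}(F)=p>0$, part~(b) produces $r_0\ge 0$ with $A^{r_0}V'_{\Lambda,\ell}(F)\subseteq\M(N;F)$, so every pole of $f$ occurs at a supersingular point; but the poles of $f$ are confined to $\mathrm{Z}(\Lambda)$, which contains no supersingular point under the hypothesis of (ii), so $f$ is holomorphic, proving (ii).

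For (iii) I would first record that the pole orders of elements of $V'_{\Lambda,\ell}(F)$ are bounded: fixing $\lambda_0\in\Lambda$, any $f\in V'_\Lambda(F)$ has the form $g/\lambda_0$ with $g\in\S_2(N,\boldsymbol{1};F)$ holomorphic, so $\ord_\tau f\ge-\ord_\tau\lambda_0$ for all $\tau$, and inspection of the proof of Theorem~\ref{HSThm1}(b) shows that the Hasse exponent $r_0$ it produces is bounded in terms of this data together with $\dim V'_{\Lambda,\ell}(F)$ and $\ell$. Combining this with the confinement of poles to supersingular points of $\mathrm{Z}(\Lambda)$ and pushing forward along the degeneracy map $\delta_{N,M}\colon X_1(N)(F)\to X_0(M)(F)$, the supersingular points of $X_0(M)(F)$ lying below a pole of some $f\in V'_{\Lambda,\ell}(F)$ all lie in the set $S'=\delta_{N,M}(\mathrm{Z}(\Lambda))\cap\{\,\tau\in X_0(M)(F)^{\mathrm{ss}}:\tau\text{ not elliptic}\,\}$, the finitely many elliptic supersingular points being removed by a separate argument built on the ramification of $\delta_{N,M}$ and the Hasse-invariant bookkeeping there, which is where the bound $p>4\ell^{2r}$ is used.

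Next I would package polar behaviour Hecke-equivariantly. Reducing principal parts at the supersingular points gives a $T_\ell$-equivariant linear map from $V'_{\Lambda,\ell}(F)$ to a finite-dimensional space $\mathcal{P}$ of principal parts at the supersingular points of $X_0(M)(F)$; its image lies in the subspace $\mathcal{P}_{S'}$ of principal parts supported on $S'$ and is therefore a $T_\ell$-stable subspace of $\mathcal{P}_{S'}$. By the isogeny-graph analysis of Section~\ref{S2-graphs}, $T_\ell$ acts on $\mathcal{P}$ as a twist, by $\chi$ and by the diamond operator $\langle\ell\rangle$, of the adjacency operator $\mathbf{B}_\ell$ of the supersingular $\ell$-isogeny graph $\mathcal{G}_{M,p}$, which is $(\ell+1)$-regular and connected. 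The condition $p>\max\{\tfrac{4}{M}\ell^{4r},4\ell^{2r}\}$ forces $\mathcal{G}_{M,p}$ to have girth exceeding $r$ --- a cyclic $\ell^{n}$-endomorphism of a supersingular elliptic curve (compatible with the level structure) forces $\ell^{2n}$ to be comparable to $p$ --- so every ball of radius $\floor{r/2}$ in $\mathcal{G}_{M,p}$ is a ball in the $(\ell+1)$-regular tree, in which the sphere of radius $\floor{r/2}$ has exactly $(\ell+1)\ell^{\floor{r/2}-1}=\ell^{\floor{r/2}}+\ell^{\floor{r/2}-1}$ vertices. If some $f\in V'_{\Lambda,\ell}(F)$ had a pole, then $T_\ell^{\floor{r/2}}f$ still lies in $V'_{\Lambda,\ell}(F)$, and in the tree region the spreading of poles along $\mathbf{B}_\ell$ cannot cancel, since distinct vertices at distance exactly $\floor{r/2}$ are joined to the pole by a unique path; thus $T_\ell^{\floor{r/2}}f$ has a pole at every vertex of a sphere of radius $\floor{r/2}$, all of which must then lie in $S'$, forcing $|S'|\ge\ell^{\floor{r/2}}+\ell^{\floor{r/2}-1}$ and contradicting the hypothesis. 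Hence every $f$ is holomorphic and $V'_{\Lambda,\ell}(F)\subseteq\M_1(N,\chi;F)$.

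I expect the pole-spreading step to be the main obstacle. The delicate parts are controlling cancellation when $T_\ell$ is iterated on genuine modular ratios rather than on abstract graph functions, matching the combinatorics of $\mathbf{B}_\ell$ to the geometry of $\ell$-isogenies uniformly in the level structure (including the diamond twist and the behaviour at elliptic supersingular points), and calibrating constants so that the girth estimate genuinely yields a tree out to radius $\floor{r/2}$; the numerology $\tfrac{4}{M}\ell^{4r}$ and $4\ell^{2r}$ is precisely the output of that calibration, which is carried out in Section~\ref{S2}.
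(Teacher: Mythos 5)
Your treatment of (i) and (ii) is correct and agrees with the paper: (i) is immediate from Theorem \ref{HSThm1}.a, and (ii) follows by combining Theorem \ref{HSThm1}.b (poles occur only at supersingular points) with the confinement of poles of $V'_{\Lambda,\ell}(F)$ to $\mathrm{Z}(\Lambda)$. The argument for (iii), however, rests on a false premise. You assert that $p>\max\{\tfrac{4}{M}\ell^{4r},4\ell^{2r}\}$ forces the supersingular $\ell$-isogeny graph to have girth exceeding $r$, so that every ball of radius $\floor{r/2}$ is a tree ball. This is not what the Goren--Lauter inequality gives: $4\N(\alpha)\N(\beta)\ge Mp$ constrains only pairs of \emph{non-commuting} endomorphisms, and a single non-scalar endomorphism commutes with itself, so there is no lower bound on the norm of one cycle in terms of $p$. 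Supersingular vertices carrying an endomorphism of norm $\ell$ (hence loops, girth $1$) occur for arbitrarily large $p$. What the hypothesis actually yields (Lemma \ref{Lemma-Local-Structure}) is that a ball of radius $r$ contains \emph{at most one} simple cycle and at most one elliptic point, so the local model is a tree \emph{or a volcano}; this is precisely why the paper must prove Lemma \ref{Lemma-Polar-Volcanoes} alongside Lemma \ref{Lemma-Polar-Trees}, and why the bound in (iii) is the weaker volcano bound $\ell^{\floor{r/2}}+\ell^{\floor{r/2}-1}$ rather than $b^r(\ell+1,0)$. Your description of where $p>4\ell^{2r}$ is used (ramification of $\delta_{N,M}$ and Hasse-invariant bookkeeping) is likewise not how the paper uses it; it enters through Lemma \ref{Lemma-GL} to exclude a second elliptic point from the ball.

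There is a second gap in the pole-spreading step. The ``unique path'' observation rules out cancellation only when $f$ has a single pole in the relevant ball; $\mathrm{Z}(\Lambda)$ may contain several supersingular points, and a vertex having two or more poles of $T_\ell^{j}f$ among its $\ell$-isogenous neighbours need not acquire a pole under one more application of $T_\ell$. Consequently you cannot conclude that $T_\ell^{\floor{r/2}}f$ has a pole at \emph{every} vertex of the sphere. The paper avoids this by working with the set $\Pi(V)$ of poles of all elements of $V$ and using only the robust implication ``exactly one neighbour in $\Pi(V)$ implies the vertex is in $\Pi(V)$'' (the polar condition of Definition \ref{Definition-Polar}), after which the needed cardinality bound is a purely combinatorial statement about polar conditions on trees and volcanoes (Lemma \ref{Lemma-Combinatorial}.b). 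Your final count happens to coincide numerically with the paper's bound, but the route to it does not survive either objection; to repair the proof you should replace the girth and full-sphere claims with the polar-condition formalism and the tree/volcano dichotomy.
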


It is important to note that the space $V'_{\Lambda,\ell}(F)$ is easy to compute and that the auxiliary computations involved do not depend in a crucial way on the choice of $F$:
\begin{itemize}\item A basis for $\S_2(N,\boldsymbol{1};\Z[\frac{1}{N}])$ can be computed to arbitrarily high precision by several methods (modular symbols \cite{Ste05}, the method of graphs \cite{Mes11}, etc.). Because reduction mod $p$ is surjective for weight $2$ cusp forms, this also yields bases for $\S_2(N,\boldsymbol{1};\F_p)$ for all $p\nmid N$.

\item A standard choice for $\Lambda$ is a finite set of weight $1$ Eisenstein series for (the primitive of) the character $\chi^{-1}$. Such series can be computed directly as $q$-expansions over an extension of $\Z[\chi]$ inverting finitely many (explicitly computable) primes \cite{DS05}. 

\item The action of $T_\ell$ on modular ratios is integral and can be interpreted on $q$-expansions (see Section \ref{S2-ratios}).\end{itemize}

The computation of $V'_{\Lambda,\ell}(F)$ therefore amounts to computing $\S_2(N,\boldsymbol{1};\Z[\frac{1}{N}])$ (once per level), a suitable $\Lambda$ (essentially once per character), and performing some linear algebra in $F\dblparens{q}$. Of course, in actual implementations, we work in $F\dblparens{q}/(q^P)$ for $P$ sufficiently large; a lower bound on the precision $P$ required to unequivocally compute $V'_{\Lambda,\ell}(F)$ is in $\mathrm{O}(\ell^2N)$ and this bound does not depend on $F$ (see Lemma \ref{Lemma-Precision}).

\begin{remark}The perhaps unusual bounds in condition (iii.) of Theorem \ref{HSThm1} are stated in order to be as general as possible; there are of course many specific situations in which these bounds can be significantly improved (see Example \ref{Elliptic-Improvements} and Remark \ref{CM-Improvements}). 

If we assume {\it no a priori knowledge of $\mathrm{Z}(\Lambda)$}, then, taking $M$ to be the conductor of $\chi$, the requirements of (iii.) and a standard bound on $|\delta\mathrm{Z}(\Lambda)|$ coming from Riemann--Roch on $X_0(M)$ imply that the desired inclusion holds for all $p$ larger than some bound in $\mathrm{O}(M^7)$.

Even when such bounds do not apply in practice, one can almost always certify the hypothesis $V'_{\Lambda,\ell}(F)\subseteq\M_1(N,\chi;F)$ once $V'_{\Lambda,\ell}(F)$ has been computed (see Section \ref{S3-certification}).\end{remark}

\subsection{Detection of torsion cohomology, computing in all characteristics}\label{S1-detection}

%[[I kind of hate this section. We need to do something to it.]]

Because of the phenomenon described in (I.), we also require a procedure for listing those primes $p\nmid N$ for which the reduction map $\S_1(N;\Z[\frac{1}{N}])\rar\S_1(N;\F_p)$ is not surjective. As alluded to in the introduction, these are the primes such that $\H^1(X_1(N),\underline{\omega}(-\mathrm{cusps}))$ has nontrivial $p$-torsion where $\underline{\omega}$ is the sheaf of weight $1$ Katz modular forms for $\Gamma_1(N)$ \cite{Katz} \cite{Kha07}. It suffices to compute for each $\chi:(\Z/N\Z)^\times\rar\Qbar$ a list $L_\chi$ of prime ideals of $\Z[\frac{1}{N},\chi]$ containing all those $\f{p}\subseteq\Z[\frac{1}{N},\chi]$ for which $\S_1(N,\chi;\Z[\tfrac{1}{N},\chi])\rar\S_1(N,\chi;\s{O}_K/\f{p})$ is not surjective. 

For each character $\chi$ of level $N$, a full implementation of the Hecke stability method in all characteristics therefore requires two passes and a certification step:

\begin{itemize}\item In the first pass we compute $\M_1(N,\chi;\Qbar)$ using Hecke stability (as outlined in Section \ref{S1-wt1} and detailed in Section \ref{S3-HSM}) and Theorem \ref{HSThm2}. This also produces a finite list $L_\chi$ of primes of $\Q(\chi)$ containing all $\f{p}$ at which reduction is not surjective (see Section \ref{S3-detection}).
\item In the second pass, for each $\f{p}\in L_\chi$ we use Hecke stability again to compute a finite-dimensional $T_\ell$-stable $V'(\s{O}_K/\f{p})\subseteq\M_1^*(N,\chi;\s{O}_K/\f{p})$ containing $\S_1(N,\chi;\s{O}_K/\f{p})$.

\item Finally, for each $\f{p}\in L_\chi$, we must certify the {\it Hecke stability hypothesis} ``$V'(\s{O}_K/\f{p})\subseteq\M_1(N,\chi;\s{O}_K/\f{p})$'' by verifying one of the conditions of Theorem \ref{HSThm2} or by some alternate method (see Section \ref{S3-certification}).\end{itemize}

Putting all of our computational work together, we have the following theorem:

\begin{theorem}\label{DetectionThm}There is an algorithm that on input $(N,\chi,\ell,P)$ (with $N,\chi,\ell$ as above and $P\ge 0$) outputs the following:
\begin{itemize}\item A basis for $\M_1(N,\chi;\Qbar)$ computed to precision $P$;
\item A (finite) list $L_\chi$ of primes $\f{p}\subseteq\Z[\frac{1}{N},\chi]$ containing all those $\f{p}$ for which $\S_1(N,\chi;\Z[\tfrac{1}{N},\chi])\rar\S_1(N,\chi;\s{O}_K/\f{p})$ is not surjective;
\item For all $\f{p}\in L_\chi$ a basis for a space $V'(\s{O}_K/\f{p})$ that is $T_\ell$-stable and that satisfies
\[\S_1(N,\chi;\s{O}_K/\f{p})\subseteq V'(\s{O}_K/\f{p})\subseteq \M_1^*(N,\chi;\s{O}_K/\f{p})\text{; and}\]
\item Certificates that guarantee the inclusion $V'(\s{O}_K/\f{p})\subseteq\M_1(N,\chi;\s{O}_K/\f{p})$ for each $\f{p}\in L_\chi$ when certification is possible.
\end{itemize}\end{theorem}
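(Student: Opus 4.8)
The plan is to realize the algorithm as the two-pass-plus-certification procedure sketched in Section~\ref{S1-detection}, and the proof then amounts to checking that each stage is a finite computation on truncated $q$-expansions and that the asserted inclusions hold; the only genuine mathematical inputs are Theorems~\ref{HSThm1} and~\ref{HSThm2} together with the precision bound of Lemma~\ref{Lemma-Precision}. For the first pass I would fix once and for all an integral basis of $\S_2(N,\boldsymbol 1;\Z[\tfrac1N])$, computable to arbitrary precision by modular symbols \cite{Ste05}, together with a nonempty finite set $\Lambda\subseteq\M_1(N,\chi^{-1};\Qbar)-\{0\}$ of weight~$1$ Eisenstein series for the primitive character underlying $\chi^{-1}$, given by explicit $q$-expansions over a localization of $\Z[\chi]$ \cite{DS05}. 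Since $T_\ell$ acts integrally on modular ratios with an explicit effect on $q$-expansions (Section~\ref{S2-ratios}), working internally to precision $P'=\max\{P,\,cN\ell^2\}$ for the constant $c$ of Lemma~\ref{Lemma-Precision}, pure linear algebra in $\Qbar\dblparens{q}/(q^{P'})$ produces the intersection $V'_\Lambda(\Qbar)=\bigcap_{\lambda\in\Lambda}\im[\lambda^{-1}]$ and then its maximal $T_\ell$-stable subspace $V'_{\Lambda,\ell}(\Qbar)$. By construction $\S_1(N,\chi;\Qbar)\subseteq V'_{\Lambda,\ell}(\Qbar)$, and Theorem~\ref{HSThm2}(i) gives $V'_{\Lambda,\ell}(\Qbar)\subseteq\M_1(N,\chi;\Qbar)$; adjoining the explicitly computable weight~$1$ Eisenstein subspace of $\M_1(N,\chi;\Qbar)$ and truncating at $q^P$ yields the first requested output, and that $P'$ really pins the space down (rather than some proper subspace visible only to lower precision) is exactly the content of Lemma~\ref{Lemma-Precision}.

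Next, the reduction map $\S_1(N,\chi;\Z[\tfrac1N,\chi])\rar\S_1(N,\chi;\s{O}_K/\f{p})$ fails to be surjective only when $\H^1(X_1(N),\underline\omega(-\mathrm{cusps}))$ has nontrivial $\f{p}$-torsion, and as explained in Section~\ref{S3-detection} these primes are visible in the first-pass data: they divide $N$, or one of the finitely many primes inverted to produce $\S_2$ and $\Lambda$, or they are the primes at which the dimension of the $T_\ell$-stable space of modular ratios jumps --- equivalently, the elementary divisors that appear when an integral $T_\ell$-stable model of $V'_{\Lambda,\ell}(\Qbar)$ is compared with the space computed directly over $\s{O}_K/\f{p}$. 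Collecting these gives the finite list $L_\chi$. For the second pass, for each $\f{p}\in L_\chi$ one reruns the linear algebra over $\s{O}_K/\f{p}$ using the reductions of the same $\S_2$-basis and the same $\Lambda$ --- legitimate because weight~$2$ cusp forms reduce surjectively and the $\Lambda$-series reduce away from $L_\chi$ --- obtaining a $T_\ell$-stable $V'(\s{O}_K/\f{p}):=V'_{\Lambda,\ell}(\s{O}_K/\f{p})$ with $\S_1(N,\chi;\s{O}_K/\f{p})\subseteq V'(\s{O}_K/\f{p})\subseteq\M_1^*(N,\chi;\s{O}_K/\f{p})$, the left inclusion by construction and the right because $\M_1^*$ is the ambient space; Theorem~\ref{HSThm1}(b) moreover confines the poles of any element of $V'(\s{O}_K/\f{p})$ to the supersingular points lying in $\mathrm{Z}(\Lambda)$.

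It remains, for each $\f{p}\in L_\chi$, to try to upgrade this to $V'(\s{O}_K/\f{p})\subseteq\M_1(N,\chi;\s{O}_K/\f{p})$. I would do so by testing in turn the numerical hypotheses of Theorem~\ref{HSThm2}: condition (ii.), which requires computing $\mathrm{Z}(\Lambda)$ and checking that it avoids the supersingular locus; and condition (iii.), which requires searching for $M\mid N$ and $r\ge 2$ with $p>\max\{\tfrac4M\ell^{4r},4\ell^{2r}\}$ for which the cardinality of $\delta_{N,M}\mathrm{Z}(\Lambda)$ intersected with the non-elliptic supersingular points of $X_0(M)$ over $\s{O}_K/\f{p}$ is less than $\ell^{\floor{r/2}}+\ell^{\floor{r/2}-1}$. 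When neither numerical test applies one falls back on the alternate certification devices of Section~\ref{S3-certification} --- a dimension comparison against an auxiliary space, or a direct holomorphy check on the computed $q$-expansions. If any of these succeeds the algorithm outputs the corresponding certificate, which by Theorem~\ref{HSThm2} or by the method of Section~\ref{S3-certification} guarantees the inclusion; if none succeeds it reports that certification was not possible, which is all that the statement claims.

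The main obstacle is precisely this last step. Everything else is unconditional once Theorems~\ref{HSThm1}--\ref{HSThm2} and Lemma~\ref{Lemma-Precision} are available, but Theorem~\ref{HSThm1}(b) yields only $A^rV'\subseteq\M_1$ for an a priori unknown exponent $r\ge 0$, and the hypotheses of Theorem~\ref{HSThm2} that force $r=0$ are genuinely restrictive --- they can fail automatically, for instance, when $X_0(N)$ has elliptic points and $p\not\equiv1\modulo12$, or whenever $\mathrm{Z}(\Lambda)$ is large --- so the practical usefulness of the second pass rests on the supplementary arguments of Section~\ref{S3-certification}, and it is for this reason that the theorem promises certificates only when certification is possible.
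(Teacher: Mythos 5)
Your overall architecture is the paper's: a first pass over $\Qbar$ using intersection and stabilization operations, Lemma~\ref{Lemma-Precision} for the working precision, Theorem~\ref{HSThm2}(i) for unconditional correctness in characteristic zero, a second pass over the residue fields, and the certification devices of Section~\ref{S3-certification} for the final bullet. Those parts are fine, as is your observation that one must adjoin the Eisenstein subspace to pass from a space squeezed between $\S_1$ and $\M_1$ to a basis of $\M_1(N,\chi;\Qbar)$.

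There is, however, a genuine gap in your construction of $L_\chi$. You propose to detect the primes of non-surjective reduction as the primes dividing $N$, the primes inverted in constructing $\S_2$ and $\Lambda$, and the primes at which the dimension of the $T_\ell$-stable space jumps (read off from elementary divisors of an integral model). This last criterion does not provably catch every bad prime. The paper's argument (the Proposition of Section~\ref{S3-detection}) runs as follows: if $\S_1(N,\chi;\s{O}_\f{p})\rar\S_1(N,\chi;\kappa_\f{p})$ is not surjective, one wants a cusp form $f\in\S_1(N,\chi;\kappa_\f{p})$ that lifts to \emph{no} element of $\M_1(N,\chi;\s{O}_\f{p})$, because only then does $f$ force the strict inclusion $\Red_\f{p}V_{\Lambda,\ell}^{(s+t)}(K)\subsetneq V_{\Lambda,\ell}^{(s+t)}(\kappa_\f{p})$ that your dimension-jump test sees. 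A non-liftable cusp form could still be congruent mod $\f{p}$ to a weight~$1$ \emph{Eisenstein} series, in which case it may lie in $\Red_\f{p}V'_{\Lambda,\ell}(K)$ and produce no jump at any stage. The paper closes this loophole by adjoining to the list the Eisenstein congruence primes for $(1,N,\chi)$ (the set $L'''$, computable from $\mathrm{L}(0,\chi)$); without them the claimed containment ``$L_\chi$ contains all non-surjective $\f{p}$'' is unproved. A secondary, more presentational issue: the equivalence you assert between ``the dimension jumps at $\f{p}$'' and ``$\f{p}$ divides the elementary divisors of an integral model'' must be established \emph{stage by stage} along the chain $V^{(0)}\mapsto V^{(1)}\mapsto\cdots\mapsto V^{(s+t)}$, since reduction mod $\f{p}$ need not commute with the composite of all the subspace operations; this is exactly what the paper's induction on the first index $j$ with $\Red_\f{p}V^{(j+1)}(K)\subsetneq V^{(j+1)}(\kappa_\f{p})$ accomplishes, and what makes the resulting list both finite and computable from determinants of minors over $\s{O}_K$ rather than from a comparison against infinitely many residue fields.
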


\section{Proof of the Hecke stability theorems}\label{S2}

Throughout this section, fix a level $N\ge 1$ and an algebraically closed field $F$ such that $N$ is nonzero in $F$.

To prove Theorems \ref{HSThm1} and \ref{HSThm2} we will show that if $V\subseteq\M^*(N;F)$ is finite-dimensional and stable under the action of $T_\ell$, then there is a {\it lower bound} $B$ on the size of the set
\[\Pi(V)=\set{\tau\in X_1(N)(F)}{\text{there exists $f\in V$ with $f(\tau)=\infty$}}\cm\]
{\it provided that it is nonempty}. With notation as in Section \ref{S1-wt1}, we have $|\Pi(V_{\Lambda,\ell}'(F))|\le|\mathrm{Z}(\Lambda)|$. Therefore, if we can prove that $B>|\mathrm{Z}(\Lambda)|$ in a given situation, we would have $|\Pi(V_{\Lambda,\ell}'(F))|=0$, so $V_{\Lambda,\ell}'(F)$ would necessarily consist of modular forms.

%At the core of the proof is an analysis 

%At the core of our proof are isogeny graphs on the moduli of level structures. The Hecke operator $T_\ell$ is

%To formulate bounds on $\Pi(V)$ above, we will show that it is a {\it polar condition} on the isogeny graph in question. We then bound the size of polar conditions from below, provided that those graphs are sufficiently ``tree-like'' at a local level. The main difficulty is to show that the supersingular component of the $\ell$-isogeny graphs on $\Gamma_0$-structures over $F$ satisfies such a local condition on its structure.

%Our strategy is to formulate lower bounds on vertex sets called {\it polar conditions} that apply to graphs that are locally ``tree-like''; the set $\Pi(V)$ is a polar condition on the relevant isogeny graph. The main difficulty is then to prove that the local structural condition holds on the supersingular connected component of our graph.

\subsection{Modular ratios}\label{S2-ratios}

The space of modular ratios $\M^*(N;F)$ is the $\Z$-graded $F$-algebra generated by ratios of homogeneous elements from $\M(N;F)$. We denote the weight $k$ component of $\M^*(N;F)$ by $\M_k^*(N;F)$.

%For any $f\in\M^*(N;F)$ we say that a homogeneous $h\in\M(N;F)$ is a {\it denominator for $f$} if $hf$ is a modular form. If $\lambda\in\M_j(N;F)$ then there is an injection
%\[[\lambda^{-1}]:\M_{k}(N;F)\rar\M_{k-j}^*(N;F):g\mapsto g/\lambda\pd\]

Much of the theory of modular forms from \cite{Katz} applies to $\M^*(N;F)$:
\begin{itemize}\item Formally, a modular ratio over $F$ of weight $k$ is a global section of $\underline{\omega}_F^{k}\otimes\s{K}$ on the modular curve $X_1(N)$, where $\underline{\omega}$ is the sheaf of weight $1$ Katz modular forms on $X_1(N)$ and $\s{K}$ is the sheaf of rational functions on $X_1(N)$. 

\item At each cusp of $X_1(N)(F)$ there is a $q$-expansion map $\M^*(N;F)\rar F\dblparens{q^{1/N}}$ obtained by evaluating modular forms at the corresponding Tate object. In practice, we work with an implicit choice of cusp such that the image of $q$-expansion lies in $F\dblparens{q}$. If $f$ and $g$ are homogeneous modular forms, we have $(f/g)(q)=f(q)/g(q)$.

\item $\M^*(N;F)$ inherits the action of the diamond and Hecke operators on $\M(N;F)$. For each character $\chi:(\Z/N\Z)^\times\rar F^\times$ we denote by $\M^*(N,\chi;F)$ the subspace of $\M^*(N;F)$ on which $\angles{d}$ acts as multiplication by $\chi(d)$ for each $d\in(\Z/N\Z)^\times$.

\item If $\lambda\in\M_j(N,\theta;F)$, then there is an injection $[\lambda^{-1}]:\M_k(N,\chi;F)\rar\M_{k-j}(N,\chi\theta^{-1};F)$ that takes any $g\in\M_k(N,\chi;F)$ to $g/\lambda$.

%
%\item If $h\in\M_j(N;F)$ is a denominator for $f\in\M^*(N;F)$, then $Q_\ell h$ is a denominator for $T_\ell h$ where $Q_\ell$ is the multiplicative Hecke operator (...)

%\item MULTIPLICATIVE HECKE OPERATORx

\item If $f\in\M_k^*(N,\chi;F)$ and $\ell$ is a prime, then the $q$-expansion of $T_\ell f$ (at any cusp) satisfies
\[(T_\ell f)(q)=\sum_{n\in\Z}a_{\ell n/N}(f)q^{n/N}+\chi(\ell)\ell^{k-1}\sum_{n\in\Z}a_{n/N}(f)q^{\ell n/N}\]
where $a_r(f)$ is the coefficient of $q^r$ in the $q$-expansion of $f$ (at that same cusp). The proof of this formula is identical to the version for modular forms found in \cite{Katz}.
\end{itemize}

\begin{lemma}\label{Lemma-No-Cusps}Suppose that $f\in\M_k^*(N,\chi;F)$ for some weight $k$ and character $\chi$, and that the prime $\ell$ satisfies $\ell\nmid N$ and $\ell\ne\mathrm{char}(F)$.
\begin{itemize}\item[a.]If $\tau\in X_1(N)(F)$ is a cusp and $\ord_\tau(f)<0$, then $\ord_\tau(T_\ell f)=\ell\ord_\tau(f)$
\item[b.]If $V\subseteq\M^*(N;F)$ is finite-dimensional and stable under the action of $T_\ell$, then
\[\Pi(V)=\set{\tau\in X_1(N)(F)}{\text{there is $f\in V$ such that $f(\tau)=\infty$}}\]
contains no cusps.\end{itemize}\end{lemma}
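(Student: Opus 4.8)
The plan is to reduce part (b.) to part (a.) and then prove (a.) by a direct computation with the $q$-expansion formula at a cusp, combined with a valuation argument. For (a.), fix a cusp $\tau\in X_1(N)(F)$ and suppose $\ord_\tau(f)=-m$ with $m>0$. By choosing local coordinates adapted to $\tau$ (i.e., working with the $q$-expansion at $\tau$, which lies in $F\dblparens{q^{1/N}}$ after the usual normalization), the pole order of $f$ at $\tau$ is read off from the lowest-order term of its Laurent expansion there. The Hecke formula
\[(T_\ell f)(q)=\sum_{n\in\Z}a_{\ell n/N}(f)q^{n/N}+\chi(\ell)\ell^{k-1}\sum_{n\in\Z}a_{n/N}(f)q^{\ell n/N}\]
expresses $T_\ell f$ in terms of the two operations $f\mapsto \sum a_{\ell n/N}q^{n/N}$ (the $U_\ell$-type term) and $f\mapsto\sum a_{n/N}q^{\ell n/N}$ (the $V_\ell$-type term). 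The first of these sends a pole of order $m$ to a pole of order at most $\lceil m/\ell\rceil$, while the second sends it to a pole of order exactly $\ell m$. Since $\ell\nmid N$, $\ell$ is invertible in $F$ (as $\ell\ne\mathrm{char}(F)$), so the coefficient $\chi(\ell)\ell^{k-1}$ is nonzero; and because $\ell m>\lceil m/\ell\rceil$ whenever $m\ge 1$, the two contributions cannot cancel and $\ord_\tau(T_\ell f)=-\ell m=\ell\ord_\tau(f)$ exactly. One subtlety is to make sure the two terms really are being compared at the same cusp with the same local parameter — but the formula is stated as holding at any cusp, so this is fine. A second subtlety is confirming that $T_\ell$ genuinely acts on $q$-expansions by this formula even on modular ratios; this is granted by the bulleted list preceding the lemma, which asserts the proof is identical to the modular-forms case in \cite{Katz}.

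For (b.), suppose toward a contradiction that $\Pi(V)$ contains a cusp $\tau$, so there is $f\in V$ with $\ord_\tau(f)<0$. Consider the pole order $\ord_\tau$ as a function on $V$; since $V$ is finite-dimensional, among all $f\in V$ with $\ord_\tau(f)<0$ we may pick one, say $f_0$, with $\ord_\tau(f_0)=-m$ minimal (i.e., $m$ as small as possible, $m\ge 1$). Because $V$ is $T_\ell$-stable, $T_\ell f_0\in V$, and by part (a.) $\ord_\tau(T_\ell f_0)=-\ell m<-m<0$. This contradicts the minimality of $m$. Hence $\Pi(V)$ contains no cusps.

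The step I expect to require the most care is the pole-order bookkeeping in part (a.): one must verify that the $V_\ell$-term strictly dominates (in the sense of producing a strictly larger pole order) so that no cancellation can occur, and that the relevant coefficient $\chi(\ell)\ell^{k-1}$ is a unit — this is exactly where the hypotheses $\ell\nmid N$ and $\ell\ne\mathrm{char}(F)$ enter, since they guarantee both that $\ell$-division of exponents stays in $\tfrac1N\Z$ and that the leading coefficient of the $V_\ell$-term does not vanish. Everything else — the reduction of (b.) to (a.), and the identification of pole orders with $q$-expansion valuations at the cusp — is routine once the $q$-expansion formalism for modular ratios from Section \ref{S2-ratios} is in hand.
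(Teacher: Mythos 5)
Part (a.) of your proposal is fine and is exactly the paper's argument: the $V_\ell$-type term contributes a pole of order exactly $\ell m$ at the cusp, the $U_\ell$-type term contributes at most $\lceil m/\ell\rceil<\ell m$, and the leading coefficient $\chi(\ell)\ell^{k-1}a_{-m/N}(f)$ is nonzero precisely because $\ell\nmid N$ and $\ell\ne\mathrm{char}(F)$ (the paper dispatches this in one sentence, but your bookkeeping is the content of that sentence).

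Part (b.) as written, however, has the extremum backwards, and the contradiction does not follow. You choose $f_0$ with $m$ \emph{as small as possible} and then observe that $T_\ell f_0$ has pole order $\ell m>m$; but producing an element with a \emph{larger} value of $m$ does not contradict the minimality of $m$. (Note also that taking the minimum of $m$ requires no finite-dimensionality at all --- any nonempty set of positive integers has a least element --- so the hypothesis you invoke is not doing any work in your argument as stated.) The fix is to take the opposite extremum: since $V$ is finite-dimensional with basis $g_1,\dots,g_d$, every $f\in V$ satisfies $\ord_\tau(f)\ge\min_i\ord_\tau(g_i)$, so the pole orders at $\tau$ are bounded below and you may choose $f_0$ with $\ord_\tau(f_0)$ \emph{minimal}, i.e.\ $m$ \emph{maximal}; then $\ord_\tau(T_\ell f_0)=-\ell m<-m$ is the desired contradiction. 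This is where finite-dimensionality genuinely enters. The paper phrases the same idea without an extremal choice: by (a.), $\{\ord_\tau(T_\ell^n f)\}_{n\ge0}=\{\ell^n\ord_\tau(f)\}_{n\ge0}$ is unbounded below, so the iterates have pairwise distinct pole orders and span an infinite-dimensional space, contradicting $\dim V<\infty$. Either formulation works once the extremum (or the unboundedness) is stated correctly.
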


\begin{proof}Claim (a.) follows from the $q$-expansion formula above since $\chi(\ell)\ell^{k-1}\ne 0$. From (a.) we see that if $\ord_\tau(f)<0$ then $\{\ord_\tau(T_\ell^nf)\}_{n\ge 0}$ is unbounded below, so any $T_\ell$-stable subspace containing $f$ must be infinite-dimensional; this proves (b.).\end{proof}

\subsection{Isogeny graphs and Hecke operators}\label{S2-graphs}

The $F$-points on the modular curve $Y_1(N)=X_1(N)-(\mathrm{cusps})$ represent isomorphism classes of $\Gamma_1(N)$-structures over $F$, and the Hecke operator $T_\ell$ encodes an {\it isogeny graph} on these points.

A {\it $\Gamma_1(N)$-structure} over $F$ is a pair $(E,P)$ where $E/F$ is an elliptic curve and $P\in E(F)$ has order $N$. A {\it $\Gamma_0(N)$-structure} over $F$ is a pair $(E,C)$ where $E/F$ is an elliptic curve and $C$ is a cyclic subgroup of $E(F)$ satisfying $|C|=N$. An {\it isogeny} of $\Gamma_i(N)$-structures $\phi:(E_1,\sigma_1)\rar(E_2,\sigma_2)$ is an isogeny $\phi:E_1\rar E_2$ of elliptic curves such that $\phi(\sigma_1)=\sigma_2$.

We consider two isogenies $\phi$ and $\psi$ to be {\it isomorphic} if there are isomorphisms $\iota_1$ and $\iota_2$ of $\Gamma_i(N)$-structures that make
\[\xymatrix{(E_1,\sigma_1)\ar[r]^\phi\ar[d]_{\iota_1}&(E_2,\sigma_2)\ar[d]^{\iota_2}\\(E_1',\sigma_1')\ar[r]_\psi&(E_2',\sigma_2')}\]
commute. We say that the isogenies $\phi$ and $\psi$ are {\it homotopic} if either $\phi\simeq\psi$ or $\widehat{\phi}\simeq\widehat{\psi}$ (where $\widehat{\phi}$ is the isogeny {\it dual} to $\phi$); the homotopy class of $\phi$ will be denoted $[\phi]$. Our justification for distinguishing between isomorphism and homotopy of isogenies will become clear later on.

Broadly speaking, an {\it isogeny graph} is a graph whose vertices are isomorphism classes of level structures and whose arcs are equivalence classes of isogenies between them. Fix $M\mid N$ and let $\ell$ be a prime that does not divide $\ell$ and that is not the characteristic of $F$. We will be working with the following three isogeny graphs:

{\small\begin{center}\begin{tabular}{|lll|}\hline  & vertices & arcs\\\hline

$\s{G}_\ell(\Gamma_1(N);F)$ & isomorphism classes of $\Gamma_1(N)$-structures & isomorphism classes of isogenies\\

$\s{G}_\ell(\Gamma_0(M);F)$ & isomorphism classes of $\Gamma_0(M)$-structures & isomorphism classes of isogenies\\

$\s{G}_\ell'(\Gamma_0(M);F)$ & isomorphism classes of $\Gamma_0(M)$-structures & homotopy classes of isogenies\\\hline

\end{tabular}\end{center}}

A priori, all of these graphs are directed and all of them may have loops and multiple arcs with the same origin and destination (technically, they are {\it directed pseudomultigraphs}). Because $F$ is algebraically closed, we may identify the vertex set of $\s{G}_\ell(\Gamma_1(N);F)$ with $Y_1(N)(F)$ and the vertex sets of $\s{G}_\ell(\Gamma_0(M);F)$ and $\s{G}'_\ell(\Gamma_0(M);F)$ with $Y_0(M)(F)$. Because of our assumptions on $\ell$, every vertex of $\s{G}_\ell(\Gamma_1(N);F)$ and $\s{G}_\ell(\Gamma_0(M);F)$ has outdegree $\ell+1$.

The adjacency relation of the isogeny graph $\s{G}_\ell(\Gamma_1(N);F)$ encodes the action of the Hecke operator $T_\ell$ on $\M^*(N;F)$ (see also \cite{Mes11}). If $f\in\M^*(N;F)$, $(E,P)$ is a $\Gamma_1(N)$-structure over $F$, and $\omega$ is a nonvanishing differential on $E$, then we can evaluate the modular ratio $T_\ell f$ on the {\it test object} $(E,P,\omega)$ by ``averaging'' $f$ at each $\ell$-isogenous test object:
\[(T_\ell f)(E,P,\omega)=\frac{1}{\ell}\sum_{\substack{H\le E[\ell]\\|H|=\ell}}f(E/H,\phi_H(P),\phi_{H*}\omega)\]
where $H$ ranges over the (cyclic) subgroups of $E(F)$ having order $\ell$ and $\phi_H:E\rar E/H$ is the quotient isogeny \cite{Katz}. In particular, if
\[\sum_{\substack{\tau'\in Y_1(N)(F)\\f(\tau')=\infty}}\deg_{\s{G}_\ell(\Gamma_1(N);F)}(\tau,\tau')=1\cm\]
then $(T_\ell f)(\tau)=\infty$. This motivates the following definition:

%More formally, recall that the $\ell$th Hecke correspondence on $Y_1(N)$ is defined by $\gamma_{2*}\gamma_1^*$ where
%\[\]
%Above, $Y_1(N;\ell)$ is the affine modular curve whose $F$-points are isomorphism classes of triples $(E,P,H)$ where $(E,P)$ is a $\Gamma_1(N)$-structure over $F$ and $H\le E(F)$ satisfies $|H|=\ell$. The morphisms $\gamma_1$ and $\gamma_2$ are interpreted on level structures by $(E,P,H)\mapsto (E,P)$ and $(E,P,H)\mapsto (E/H,\phi_H(P))$, respectively. The Hecke operator $T_\ell:\M^*(N;F)\rar\M^*(N;F)$ is the pullback of $\gamma_{2*}\gamma_1^*$, and the above claim follows from the fact that $\gamma_1$ and $\gamma_2$ are \'etale.

\begin{definition}\label{Definition-Polar}Let $G$ be a directed graph. A subset $P\subseteq G$ of vertices is called a {\it polar condition} on $G$ if for all vertices $v\in G$,
\[\deg(v,P)=\sum_{w\in P}\deg(v,w)=1\]
guarantees $v\in P$.\end{definition}

By the observation preceding Definition \ref{Definition-Polar},

\begin{proposition}\label{Proposition-Pi-Polar}Let $V\subseteq\M^*(N;F)$ and let
\[\Pi(V)=\set{\tau\in X_1(N)(F)}{\text{there exists $f\in V$ with $f(\tau)=\infty$}}\pd\]

If $V$ is $T_\ell$-stable and $\Pi(V)$ is nonempty, then $\Pi(V)\subseteq Y_1(N)$ (by Proposition \ref{Lemma-No-Cusps}) and $\Pi(V)$ is a polar condition on $\s{G}_\ell(\Gamma_1(N);F)$.\end{proposition}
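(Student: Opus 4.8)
The plan is to chase the definitions and reduce everything to the averaging formula for $T_\ell$ together with Lemma \ref{Lemma-No-Cusps}. The statement has two parts: first, that $\Pi(V)$ contains no cusps, which is exactly part (b.) of Lemma \ref{Lemma-No-Cusps} (the excerpt writes ``Proposition \ref{Lemma-No-Cusps}'' but means the same object), so there is nothing to do there beyond citing it. Thus the real content is the polar condition claim, and since $\Pi(V)$ avoids the cusps, we may work entirely inside $Y_1(N)(F)$, where vertices of $\s{G}_\ell(\Gamma_1(N);F)$ correspond to isomorphism classes of $\Gamma_1(N)$-structures and outdegrees are all $\ell+1$ by the hypotheses on $\ell$.

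First I would fix a vertex $\tau \in Y_1(N)(F)$, represented by a $\Gamma_1(N)$-structure $(E,P)$ with a chosen nonvanishing differential $\omega$, and suppose $\deg(\tau,\Pi(V)) = \sum_{\sigma\in\Pi(V)}\deg(\tau,\sigma) = 1$. I want to show $\tau\in\Pi(V)$, i.e.\ produce some $f\in V$ with $f(\tau)=\infty$. The hypothesis means that among the $\ell+1$ $\ell$-isogenies out of $(E,P)$ — equivalently the subgroups $H\le E[\ell]$ with $|H|=\ell$ — exactly one lands (up to isomorphism of $\Gamma_1(N)$-structures) on a vertex $\tau'$ in $\Pi(V)$, and that with multiplicity one. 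Pick $f\in V$ with $f(\tau')=\infty$, i.e.\ $\ord_{\tau'}(f)<0$. Then in the averaging formula
\[
(T_\ell f)(E,P,\omega)=\frac{1}{\ell}\sum_{\substack{H\le E[\ell]\\ |H|=\ell}} f(E/H,\phi_H(P),\phi_{H*}\omega)\cm
\]
all the summands indexed by the $\ell$ ``other'' subgroups $H$ are evaluations of $f$ at test objects whose underlying point is \emph{not} in $\Pi(V)$, hence at which $f$ — and indeed every element of $V$ — is finite; only the single summand coming from the distinguished $H$ blows up, because $f(E/H,\phi_H(P),\phi_{H*}\omega)$ is the value of $f$ at a test object lying over $\tau'$. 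So the right-hand side is a sum of finite quantities plus exactly one infinite quantity, forcing $(T_\ell f)(E,P,\omega)=\infty$, i.e.\ $\ord_\tau(T_\ell f)<0$. Since $V$ is $T_\ell$-stable, $T_\ell f\in V$, and this witnesses $\tau\in\Pi(V)$, which is what we wanted.

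The main obstacle — and the point that needs care rather than a one-line citation — is the bookkeeping that converts ``$\deg(\tau,\tau')=1$'' in the graph into ``exactly one summand in the averaging formula is singular.'' One has to be precise about what an arc of $\s{G}_\ell(\Gamma_1(N);F)$ is (an isomorphism class of isogeny of $\Gamma_1(N)$-structures), how the $\ell+1$ cyclic subgroups $H$ of order $\ell$ index these arcs, and why the multiplicity $\deg(\tau,\tau')$ counts exactly the number of such $H$ with $(E/H,\phi_H(P))\cong\tau'$; automorphisms of $(E,P)$ could in principle identify two subgroups $H$, $H'$ giving the ``same'' arc, so one must check that the count of singular summands in the sum really equals $\deg(\tau,\tau')$ and not something off by the size of an automorphism group. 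I would address this by noting that for a modular ratio $f$ of a fixed weight, $f(E/H,\phi_H(P),\phi_{H*}\omega)$ depends only on the isomorphism class of the pointed curve $(E/H,\phi_H(P))$ up to the appropriate power of the scaling from $\phi_{H*}\omega$, so whether a given summand is $\infty$ depends only on whether $(E/H,\phi_H(P))$ represents a point of $\Pi(V)$; and since $\deg(\tau,\tau')=1$ in particular there is a \emph{unique} arc, there is exactly one such $H$ up to the identifications, hence exactly one singular summand. (One also observes that no cancellation can occur: a single term with a pole of some order cannot be cancelled by finitely many terms that are all regular.) Once this is pinned down the rest is the formal manipulation above, and the cusp-freeness is immediate from Lemma \ref{Lemma-No-Cusps}(b.).
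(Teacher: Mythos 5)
Your proof is correct and follows essentially the same route as the paper: the paper derives this proposition directly from the averaging formula for $T_\ell$ on test objects (the ``observation preceding Definition \ref{Definition-Polar}'') together with Lemma \ref{Lemma-No-Cusps}, exactly as you do. You in fact supply more detail than the paper does, in particular on the bookkeeping between cyclic subgroups $H\le E[\ell]$ and arcs of $\s{G}_\ell(\Gamma_1(N);F)$, which the paper leaves implicit.
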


Ultimately we want to bound $|\Pi(V)|$ from below using the fact that it is a polar condition, but working with $\Gamma_1(N)$-structures directly is awkward. Instead, we will pass to the isogeny graphs on $Y_0(M)(F)$ via the {\it modular degeneracy map} $\delta_{N,M}:Y_1(N)\rar Y_0(M)$ interpreted on level structures by $(E,P)\mapsto (E,\angles{\frac{N}{M}P})$. This degeneracy induces a surjective graph homomorphism $\s{G}_\ell(\Gamma_1(N);F)\rar\s{G}_\ell(\Gamma_0(M);F)$ by taking the arc $\tau\rar\tau'$ represented by some isogeny $\phi$ to the arc $\delta_{N,M}(\tau)\rar\delta_{N,M}(\tau')$ represented by that same isogeny. We want to show that the image of $\Pi(V)$ under $\delta_{N,M}$ is a polar condition on $\s{G}_\ell(\Gamma_0(M);F)$ (provided that it is nonempty).

\begin{lemma}\label{Lemma-Polar-Fibers}Let $r\ge 1$, let $G$ and $G'$ be directed graphs and let $P$ be a polar condition on $G$. Suppose that $\delta:G\rar G'$ is a surjective homomorphism of graphs such that
\begin{itemize}\item[i.] For all vertices $v\in G$ the map $\delta:\mathrm{arcs}_G(v,G)\rar\mathrm{arcs}_{G'}(\delta(v),G')$ is a bijection; and
\item[ii.] Adjacency in $G'$ ``lifts along fibers of $\delta$'': For all vertices $v',w'\in G'$,
\[\deg_{G'}(v',w')\ge 1\Rightarrow \forall w\in \delta^{-1}(w')\ \ \exists v\in \delta^{-1}(v')\ \ \deg_G(v,w)\ge 1\text{.}\]
That is, any diagram of the form
\[\begin{matrix}\xymatrix{&w\ar@{|->}[d]^\delta\\v'\ar[r]&w'}\end{matrix}\quad\text{can be completed to a diagram of the form}\quad\begin{matrix}\xymatrix{v\ar@{|-->}[d]_\delta\ar@{-->}[r]&w\ar@{|->}[d]^\delta\\v'\ar[r]&w'}\end{matrix}\pd\]

\end{itemize}
Then $\delta(P)$ is a polar condition on $G'$.\end{lemma}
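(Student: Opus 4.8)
The plan is to verify the polar-condition inequality for $\delta(P)$ directly: fix a vertex $v'\in G'$ with $\deg_{G'}(v',\delta(P))=1$, and show $v'\in\delta(P)$. Since $\delta$ is surjective, pick any $v\in\delta^{-1}(v')$; the goal is to show $v\in P$, for then $v'=\delta(v)\in\delta(P)$. By the polar-condition hypothesis on $G$, it suffices to prove $\deg_G(v,P)=1$.

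First I would compute $\deg_G(v,P)$ in terms of data downstairs. Writing $\deg_G(v,P)=\sum_{w\in P}\deg_G(v,w)=\sum_{w'\in G'}\sum_{w\in P\cap\delta^{-1}(w')}\deg_G(v,w)$, I group the arcs out of $v$ by the $\delta$-image of their destination. By hypothesis (i), $\delta$ restricts to a bijection from the arcs out of $v$ to the arcs out of $v'$; hence for each $w'\in G'$, the number of arcs from $v$ into $\delta^{-1}(w')$ equals $\deg_{G'}(v',w')$. In particular all such arcs out of $v$ land in a single fiber, and summing only over $w'\in\delta(P)$ gives $\sum_{w'\in\delta(P)}\deg_{G'}(v',w')$, which is at most $\deg_{G'}(v',\delta(P))=1$. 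This already yields $\deg_G(v,P)\le\deg_{G'}(v',\delta(P))=1$. (Here one must be a little careful: the arcs out of $v$ that land in $\delta^{-1}(w')$ may land at various $w$ in that fiber, some in $P$ and some not, so a priori $\deg_G(v,P)$ could be strictly smaller — this is exactly why we need the reverse inequality.)

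The main obstacle — and the only place the hypotheses are used in a nontrivial way — is the reverse inequality $\deg_G(v,P)\ge 1$, equivalently, showing that the unique arc out of $v'$ whose destination lies in $\delta(P)$ does lift to an arc out of $v$ landing \emph{in $P$ itself}, not merely in the fiber over its image. Let $w'\in\delta(P)$ be that destination, so $\deg_{G'}(v',w')\ge 1$, and choose $w\in P$ with $\delta(w)=w'$. By hypothesis (ii), applied to the partial diagram with this $w$ and our $v'$, there exists $v_0\in\delta^{-1}(v')$ with $\deg_G(v_0,w)\ge 1$. The subtlety is that (ii) hands us \emph{some} preimage $v_0$ of $v'$, not necessarily the $v$ we fixed. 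To finish I would argue that this is harmless: run the entire argument with $v_0$ in place of $v$. For $v_0$ we get both $\deg_G(v_0,P)\le 1$ (from the paragraph above, which worked for any preimage of $v'$) and $\deg_G(v_0,w)\ge 1$ with $w\in P$, hence $\deg_G(v_0,P)=1$; the polar-condition property of $P$ in $G$ then forces $v_0\in P$, so $v'=\delta(v_0)\in\delta(P)$. Thus the arbitrary choice of $v$ was a red herring — we only needed \emph{some} preimage of $v'$ lying in $P$, and (ii) supplies one. This completes the verification that $\delta(P)$ is a polar condition on $G'$.

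A couple of routine points I would spell out: that $\delta$ being a graph homomorphism means it sends arcs to arcs, so $\deg_{G'}(\delta(v),\delta(w))\ge\deg_G(v,w)$ after accounting for multiplicities, and that (i) is precisely the statement upgrading this to an equality on out-arcs at each vertex; and that the parameter $r$ plays no role in this lemma (it is presumably a holdover for the application, where $\delta$ will be iterated or a power of $\ell$ appears). I expect no genuine difficulty beyond the preimage-bookkeeping described above.
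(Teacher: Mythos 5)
Your proof is correct and follows essentially the same route as the paper's: the upper bound $\deg_G(v_0,P)\le\deg_{G'}(v',\delta(P))=1$ via the arc bijection of (i), the lower bound $\deg_G(v_0,P)\ge 1$ via the lift supplied by (ii), and then polarity of $P$ to conclude $v_0\in P$. The only cosmetic difference is that you first fix an arbitrary preimage of $v'$ before switching to the one produced by (ii), whereas the paper works with that preimage from the start; your observation that $r$ is unused in the statement is also correct.
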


\begin{proof}Let $v'\in G'$. If $\deg_{G'}(v',\delta(P))=1$, there exists $w'\in \delta(P)$ such that $\deg_{G'}(v',w')=1$. Fix $w\in \delta^{-1}(w')$ such that $w\in P$. By (ii.), there exists $v\in \delta^{-1}(v')$ such that $\deg_G(v,w)\ge 1$. Since $w\in P$, $\deg_G(v,P)\ge 1$.

On the other hand, because $\delta:\mathrm{arcs}_G(v,G)\rar\mathrm{arcs}_{G'}(v',G')$ is a bijection, $\delta:\mathrm{arcs}_G(v,P)\rar\mathrm{arcs}_{G'}(v',\delta(P))$ is an injection. Thus, $\deg_G(v,P)\le\deg_G(v',\delta(P))=1$. Since $P$ is a polar condition on $G$ and $\deg_G(v,P)=1$, we have $v\in P$. Therefore, $v'\in \delta(P)$, and this proves that $\delta(P)$ is also polar.\end{proof}

\begin{proposition}Let $V$ be a subspace of $\M^*(N;F)$ and define $\Pi(V)$ as in Proposition \ref{Proposition-Pi-Polar}.

If $V$ is stable under the action of $T_\ell$ and $\Pi(V)$ is nonempty, then $\delta_{N,M}\Pi(V)$ is a polar condition on $\s{G}_\ell(\Gamma_0(M);F)$.\end{proposition}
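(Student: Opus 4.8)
The plan is to reduce the statement to Lemma~\ref{Lemma-Polar-Fibers}, applied to the surjective graph homomorphism $\delta = \delta_{N,M}\colon\s{G}_\ell(\Gamma_1(N);F)\to\s{G}_\ell(\Gamma_0(M);F)$ with the polar condition $P=\Pi(V)$. By Proposition~\ref{Proposition-Pi-Polar}, $\Pi(V)$ is a polar condition on $\s{G}_\ell(\Gamma_1(N);F)$ contained in $Y_1(N)(F)$, so $\delta_{N,M}$ is defined on it, and $\delta_{N,M}\Pi(V)$ is nonempty because $\Pi(V)$ is. Everything then comes down to checking the two hypotheses (i.) and (ii.) of Lemma~\ref{Lemma-Polar-Fibers} for $\delta$; taking $r=1$ is fine since $r$ plays no role in that lemma's conclusion.

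For hypothesis (i.), I would fix a vertex $v=(E,P)$ of $\s{G}_\ell(\Gamma_1(N);F)$. Its out-arcs are represented by the quotient isogenies $\phi_H\colon(E,P)\to(E/H,\phi_H(P))$ as $H$ ranges over the order-$\ell$ cyclic subgroups of $E[\ell]$, and $\delta$ sends such an arc to the arc out of $\delta(v)=(E,\angles{\tfrac NM P})$ represented by $\phi_H\colon(E,\angles{\tfrac NM P})\to(E/H,\angles{\tfrac NM\phi_H(P)})$. So $\delta$ carries the out-arcs of $v$ onto the out-arcs of $\delta(v)$; since $\ell\nmid N$ and $\ell\neq\mathrm{char}(F)$, both vertices have out-degree exactly $\ell+1$, and a surjection between finite sets of equal cardinality is a bijection. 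This step is essentially bookkeeping once one invokes the $(\ell+1)$-regularity of out-degrees already noted in Section~\ref{S2-graphs}.

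For hypothesis (ii.), suppose $v'=(E',C')$ and $w'=(E'',C'')$ are vertices of $\s{G}_\ell(\Gamma_0(M);F)$ joined by an arc, represented by an $\ell$-isogeny $\psi\colon E'\to E''$ with $\psi(C')=C''$, and let $w=(E'',Q)\in\delta^{-1}(w')$, so $Q$ has order $N$ and $\angles{\tfrac NM Q}=C''$. Since $\deg\psi=\ell$ is prime to $N$, $\psi$ restricts to an isomorphism $E'[N]\xrightarrow{\ \sim\ }E''[N]$; let $P\in E'[N]$ be the unique preimage of $Q$. Then $P$ has order $N$, and $\psi$ maps the two order-$M$ subgroups $\angles{\tfrac NM P}$ and $C'$ of $E'[M]$ both isomorphically onto $\angles{\tfrac NM Q}=C''$, so the injectivity of $\psi$ on $E'[M]$ forces $\angles{\tfrac NM P}=C'$. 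Hence $v=(E',P)$ lies in $\delta^{-1}(v')$ and $\psi\colon(E',P)\to(E'',Q)$ is an isogeny of $\Gamma_1(N)$-structures, giving the required arc $v\to w$. I expect this to be the only place anything substantive happens: the whole argument hinges on the elementary but essential fact that an isogeny of degree prime to $N$ is an isomorphism on $N$-torsion, which is what lets one transport the $\Gamma_1(N)$-structure backwards along $\psi$ while keeping the coarser $\Gamma_0(M)$-data compatible. With (i.) and (ii.) in hand, Lemma~\ref{Lemma-Polar-Fibers} yields that $\delta_{N,M}\Pi(V)$ is a polar condition on $\s{G}_\ell(\Gamma_0(M);F)$.
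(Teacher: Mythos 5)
Your proposal is correct and follows essentially the same route as the paper: reduce to Lemma~\ref{Lemma-Polar-Fibers} and verify hypotheses (i.) and (ii.) for $\delta_{N,M}$, with the lift in (ii.) obtained by inverting the isogeny on $N$-torsion (your unique preimage $P$ is exactly the paper's $m\widehat{\phi}(P_2)$ with $\ell m\equiv 1\bmod N$). You supply slightly more detail than the paper on both the out-arc bijection and the check that $\angles{\tfrac{N}{M}P}=C'$, but the argument is the same.
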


\begin{proof}We need only prove that $\delta_{N,M}:\s{G}_\ell(\Gamma_1(N);F)\rar\s{G}_\ell(\Gamma_0(M);F)$ satisfies conditions (i.) and (ii.) of Lemma \ref{Lemma-Polar-Fibers}. It has property (i.) by construction. To prove that it has property (ii.), it is sufficient to show that given an $\ell$-isogeny of $\Gamma_0(M)$-structures $\phi:(E_1,C_1)\rar (E_2,C_2)$ and $P_2\in E_2[N]$ with $C_2=\angles{\frac{N}{M}P_2}$, there exists $P_1\in E_1[N]$ such that 
\[\xymatrix{(E_1,P_1)\ar@{-->}[r]^\phi\ar@{|-->}[d]_{\delta_{N,M}}&(E_2,P_2)\ar@{|->}[d]^{\delta_{N,M}}\\(E_1,C_1)\ar[r]_\phi&(E_2,C_2)}\]
commutes. One easily verifies that $P_1=m\widehat{\phi}(P_2)$ where $\ell m\equiv 1\modulo N$ works.\end{proof}

A minor disadvantage of working with isogeny graphs on $\Gamma_0(M)$-structures is that $Y_0(M)(F)$ may contain (finitely many) elliptic points---points representing $\Gamma_0(M)$-structures over $F$ whose automorphism groups are strictly larger than $\{\pm 1\}$. If $(E,C)$ represents an elliptic point, then there could be $\phi,\psi:(E,C)\rar(E',C')$ such that $\phi\not\simeq\psi$ but $\widehat{\phi}\simeq\widehat{\psi}$. This means that $\phi$ and $\psi$ are {\it homotopic but not isomorphic}.

The isogeny graph $\s{G}_\ell'(\Gamma_0(M);F)$ (the last graph listed in the table above) is designed to circumvent this issue: Dualization of isogenies is a direction-reversing involution on the arc set of $\s{G}_\ell'(\Gamma_0(M);F)$, so we may consider $\s{G}_\ell'(\Gamma_0(M);F)$ as an {\it undirected graph} whose edges are orbits $\{[\phi],[\widehat{\phi}]\}$ under the action of dualization. It is necessary to distinguish between loops $[\phi]:\tau\rar\tau$ that are equal to their own duals ({\it self-dual} loops) from those that are not (see Remark \ref{Remark-Structure-Theorem}). Every nonelliptic vertex of $\s{G}_\ell'(\Gamma_0(M);F)$ has degree $\ell+1$ (note that a self-dual loop contributes $1$ to the degree of its vertex and a non-self-dual loop contributes $2$).

Let ${W}_M$ denote the set of elliptic points in $Y_0(M)(F)$.

\begin{lemma}\label{Lemma-Isogeny-Graph-Transfer}\begin{itemize}\item[a.]If $P$ is a polar condition on $\s{G}_\ell(\Gamma_0(M);F)$, then $P$ is a polar condition on $\s{G}_\ell'(\Gamma_0(M);F)$ as well.

\item[b.]If $P'$ is a polar condition on $\s{G}_\ell'(\Gamma_0(M);F)$, then there is a polar condition $P$ on $\s{G}_\ell(\Gamma_0(M);F)$ such that $P-{W}_M=P'-{W}_M$.
\end{itemize}
\end{lemma}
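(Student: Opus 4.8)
The plan is to reduce both parts to a vertex-by-vertex comparison of the local degree functions of the two graphs; throughout write $\s{G}_\ell$ for $\s{G}_\ell(\Gamma_0(M);F)$ and $\s{G}_\ell'$ for $\s{G}_\ell'(\Gamma_0(M);F)$. First I would set up the dictionary between them. The assignment $\phi\mapsto[\phi]$, followed by the identification $[\phi]\leftrightarrow[\widehat\phi]$, gives a map from the arcs of $\s{G}_\ell$ to the edges of $\s{G}_\ell'$ that sends an arc with endpoint set $\{v,w\}$ to an edge with the same endpoint set; hence $\deg_{\s{G}_\ell}(v,w)>0$ if and only if $\deg_{\s{G}_\ell'}(v,w)>0$, so positivity of degrees transfers in both directions. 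Moreover, when $v\notin{W}_M$ the automorphism group of the corresponding $\Gamma_0(M)$-structure is $\{\pm1\}$, which acts trivially on the order-$\ell$ subgroups of the underlying curve and on loop classes; for such $v$ the map above restricts to a bijection on the arcs out of $v$, and one obtains the exact equality $\deg_{\s{G}_\ell}(v,w)=\deg_{\s{G}_\ell'}(v,w)$ for every $w$, with the self-dual versus non-self-dual loop bookkeeping accounted for exactly as in the text. The entire role of ${W}_M$ is that this equality can fail at an elliptic point, where an $\mathrm{Aut}$-orbit of subgroups collapsing to a single edge of $\s{G}_\ell'$ may have size larger than one, so only $\deg_{\s{G}_\ell'}(v,w)\le\deg_{\s{G}_\ell}(v,w)$ survives.

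For part (a), let $P$ be polar on $\s{G}_\ell$ and suppose $\deg_{\s{G}_\ell'}(v,P)=1$; I want $v\in P$. If $v\in P$ there is nothing to prove, so assume $v\notin P$. Then there is a unique $w_0\in P$ with $\deg_{\s{G}_\ell'}(v,w_0)=1$ and $\deg_{\s{G}_\ell'}(v,w)=0$ for the remaining $w\in P$; by the dictionary $\deg_{\s{G}_\ell}(v,w)=0$ for those $w$ too, so $\deg_{\s{G}_\ell}(v,P)=\deg_{\s{G}_\ell}(v,w_0)$. If $v\notin{W}_M$ this equals $1$ by the exact equality above, and polarity of $P$ on $\s{G}_\ell$ forces $v\in P$, a contradiction; so no non-elliptic $v$ causes trouble. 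The case $v\in{W}_M$ is the crux: here I would use the explicit description of the $\mathrm{Aut}$-action on $\ell$-isogenies in a neighbourhood of an elliptic point to show that, once $P$ is known to be polar on $\s{G}_\ell$, the $\mathrm{Aut}(v)$-orbit feeding the unique edge into $P$ must be a singleton, so again $\deg_{\s{G}_\ell}(v,P)=1$ and $v\in P$.

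For part (b), given $P'$ polar on $\s{G}_\ell'$, I would take $P$ to be $P'$ corrected on the finite set ${W}_M$: start from $P'\setminus{W}_M$ and adjoin exactly those elliptic points that a polar subset of $\s{G}_\ell$ is forced to contain, iterating over the finitely many elements of ${W}_M$ if necessary. The verification that $P$ is polar on $\s{G}_\ell$ mirrors (a): for $v$ with $\deg_{\s{G}_\ell}(v,P)=1$ and $v\notin{W}_M$, the single contributing arc lies in an $\mathrm{Aut}$-orbit of size one, so by the dictionary (after undoing the correction on ${W}_M$) the corresponding degree of $v$ into $P'$ in $\s{G}_\ell'$ is $1$, and polarity of $P'$ gives $v\in P'\setminus{W}_M\subseteq P$; the elliptic $v$ are exactly the ones absorbed by the choice of correction. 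Since $P$ and $P'$ differ only inside ${W}_M$, we get $P-{W}_M=P'-{W}_M$.

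The main obstacle in both parts is the behaviour at elliptic points. Away from ${W}_M$ the two graphs carry literally the same local adjacency data and transfer of polarity is immediate; at an elliptic point the extra automorphisms make the two degree functions disagree, and one must either show (for (a)) that a polar condition on $\s{G}_\ell$ is automatically well behaved there, or (for (b)) absorb the discrepancy into the permitted modification of $P'$ on ${W}_M$. Making these finite, elliptic-point bookkeeping arguments precise—carefully tracking $\mathrm{Aut}$-orbit sizes and the self-dual versus non-self-dual loop distinction—is the only genuinely delicate step; everything else is the routine dictionary above.
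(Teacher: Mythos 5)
Your dictionary between the two graphs is exactly the observation on which the paper's (one-sentence) proof rests, and your treatment of non-elliptic vertices is correct. The difficulty is precisely where you locate it, but your proposed resolutions at the elliptic points do not work as stated. For (a): you assert that once $P$ is polar on $\s{G}_\ell(\Gamma_0(M);F)$, the $\mathrm{Aut}(v)$-orbit feeding the unique edge of $\s{G}_\ell'(\Gamma_0(M);F)$ from an elliptic $v$ into $P$ ``must be a singleton.'' Polarity is a condition on the subset $P$, not on the local structure of the graph at $v$, so it cannot force an orbit of isogenies out of $v$ to be a singleton. If that orbit has size two, then $\deg_{\s{G}_\ell(\Gamma_0(M);F)}(v,P)=2$ while $\deg_{\s{G}_\ell'(\Gamma_0(M);F)}(v,P)=1$; polarity of $P$ on $\s{G}_\ell(\Gamma_0(M);F)$ then says nothing about $v$, and your argument stalls. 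This is the one configuration that needs an actual argument (or an explicit acknowledgment that the conclusion is only required away from ${W}_M$, which is how the lemma is used downstream), and you have not supplied one.

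For (b): the iterative correction is both unverified and unnecessary. Adjoining an elliptic point $e$ to $P$ can raise $\deg(v,P)$ from $0$ to $1$ at a \emph{non-elliptic} $v$, creating a new polarity constraint that forces $v$ into $P$; since $v\notin{W}_M$, your iteration would then leave ${W}_M$ and destroy the required equality $P-{W}_M=P'-{W}_M$. You never rule out this cascade. The clean route is already contained in your dictionary: for all $v,w$ one has $\deg_{\s{G}_\ell'(\Gamma_0(M);F)}(v,w)\le\deg_{\s{G}_\ell(\Gamma_0(M);F)}(v,w)$ together with the two-sided equivalence of positivity. Hence $\deg_{\s{G}_\ell(\Gamma_0(M);F)}(v,P')=1$ forces $\deg_{\s{G}_\ell'(\Gamma_0(M);F)}(v,P')=1$ at \emph{every} vertex, elliptic or not, so $P=P'$ itself is polar on $\s{G}_\ell(\Gamma_0(M);F)$ and no correction on ${W}_M$ is needed.
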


\begin{proof}Both of these follow from the fact that $\s{G}_\ell'(\Gamma_0(M);F)$ is a subgraph of $\s{G}_\ell(\Gamma_0(M);F)$, and the complement of this subgraph consists of (finitely many) arcs based at elliptic points of $Y_0(M)(F)$.\end{proof}

\subsection{Structure theory of isogeny graphs on $Y_0(M)$}\label{S2-structure}

To every $\tau\in Y_0(M)(F)$ we associate an abstract ring $\End(\tau)$ such that for every $\Gamma_0(M)$-structure $(E,C)$ representing $\tau$,
\[\End(\tau)\simeq\End(E,C)=\set{\alpha\in\End(E)}{\alpha(C)\subseteq C}\]
The ring $\End(\tau)$ is equipped with a {\it conjugation} operation $\alpha\mapsto\bar{\alpha}$ (corresponding to isogeny dualization) and a multiplicative {\it norm} $\N:\End(\tau)\rar\Z_{\ge 0}:\alpha\mapsto\alpha\bar{\alpha}$ (corresponding to isogeny degree). By the elementary classification of endomorphisms of elliptic curves, $\End(\tau)$ is isomorphic to $\Z$, an order of an imaginary quadratic number field, or an Eichler order of a quaternion algebra. If $\End(\tau)$ is commutative, we call $\tau$ {\it ordinary}, and we call $\tau$ {\it supersingular} otherwise.

%Our main strategy in this section will be to relate the combinatorial structure of $\s{G}_\ell(\Gamma_0(M);F)$ with the algebraic structure of $\End(\tau)$ as $\tau$ ranges over the vertices of this graph.

\begin{lemma}\label{Lemma-End-Transfer} Let $\tau,\tau'\in Y_0(M)(F)$. If $\alpha\in\End(\tau)$ and there is a homotopy class $[\phi]:\tau\rar\tau'$, then $\Z[d\alpha]\hookrightarrow\End(\tau')$ where $d=\deg\phi$.\end{lemma}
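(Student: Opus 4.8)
The plan is to fix a $\Gamma_0(M)$-structure $(E,C)$ representing $\tau$ and a $\Gamma_0(M)$-structure $(E',C')$ representing $\tau'$ together with an isogeny $\phi\colon (E,C)\to(E',C')$ whose homotopy class is $[\phi]$ (so $\deg\phi = d$), and then to produce the desired embedding $\Z[d\alpha]\hookrightarrow\End(\tau')$ by transporting $\alpha$ across $\phi$ in the only way that makes sense, namely $\alpha\mapsto \phi\,\alpha\,\widehat{\phi}$. First I would record that $\phi\,\alpha\,\widehat{\phi}$ is an honest endomorphism of $E'$, since $\widehat{\phi}\colon E'\to E$, $\alpha\in\End(E)$, $\phi\colon E\to E'$ compose; and I would check it preserves $C'$: we have $\widehat{\phi}(C')\subseteq C$ because dualizing the relation $\phi(C)=C'$ (up to the standard kernel/index bookkeeping for isogenies of $\Gamma_0(M)$-structures) gives $\widehat{\phi}(C')\subseteq C$, then $\alpha(C)\subseteq C$ by hypothesis $\alpha\in\End(\tau)$, then $\phi(C)\subseteq C'$, so $(\phi\alpha\widehat{\phi})(C')\subseteq C'$, i.e.\ $\phi\alpha\widehat{\phi}\in\End(E',C')\cong\End(\tau')$.

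Next I would verify that the assignment $\alpha\mapsto\phi\alpha\widehat{\phi}$, while additive, is \emph{not} multiplicative — it rescales: $(\phi\alpha\widehat{\phi})(\phi\beta\widehat{\phi}) = \phi\,\alpha(\widehat{\phi}\phi)\beta\,\widehat{\phi} = \phi\,(\alpha\cdot d\cdot\beta)\,\widehat{\phi} = d\cdot\phi(\alpha\beta)\widehat{\phi}$, using $\widehat{\phi}\phi = [\deg\phi] = [d]$ on $E$. This is exactly why the statement involves $d\alpha$ and not $\alpha$. So I would instead consider the map $\mathrm{N}\colon\End(\tau)\to\End(\tau')$, $\alpha\mapsto \phi\alpha\widehat{\phi}$, and observe that on the subring $\Z[\alpha]\subseteq\End(\tau)$ it carries $\alpha^n$ to $d^{\,n-1}\phi\alpha^n\widehat{\phi}$; consequently $\phi\,(\Z[\alpha])\,\widehat{\phi}$ together with $d$ generates a subring, and in fact the image of $\Z[\alpha]$ under the \emph{twisted} map $\alpha\mapsto \tfrac{1}{d}\,\phi\alpha\widehat{\phi}$ — interpreted correctly — lands in $\End(\tau')$ on the subring generated by $d\alpha$. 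Concretely: send $1\mapsto 1$ and $d\alpha\mapsto \phi\alpha\widehat{\phi}$, and extend multiplicatively; the rescaling computation above shows $(d\alpha)^2 = d^2\alpha^2 \mapsto d\cdot\phi\alpha^2\widehat{\phi} = (\phi\alpha\widehat{\phi})^2$, and inductively $(d\alpha)^n\mapsto (\phi\alpha\widehat{\phi})^n$, so this is a well-defined ring homomorphism $\Z[d\alpha]\to\End(\tau')$. Finally I would check injectivity: $\Z[d\alpha]$ is a subring of $\End(\tau)$, hence torsion-free, and if $P(d\alpha)\mapsto 0$ for a polynomial $P\in\Z[X]$ then $\phi\,P_0(\alpha)\,\widehat{\phi} = 0$ for the corresponding rescaled polynomial $P_0$, which forces $P_0(\alpha)=0$ in $\End(E)\otimes\Q$ (since $\phi,\widehat{\phi}$ are isogenies, hence injective up to torsion / nonzero divisors in the endomorphism algebra), hence $P(d\alpha)=0$ in $\End(\tau)$.

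There is one genuinely delicate point and it is the definition-level issue of why the statement is phrased for a \emph{homotopy} class $[\phi]$ rather than an isomorphism class: the construction $\alpha\mapsto\phi\alpha\widehat{\phi}$ must be insensitive to replacing $\phi$ by $\widehat{\phi}$ (and by isomorphic representatives). Replacing $\phi$ by an isomorphic isogeny changes it by pre- and post-composition with automorphisms of the level structures, which only conjugates the image inside $\End(\tau')$ and doesn't change the isomorphism type of the subring — fine. Replacing $\phi\colon\tau\to\tau'$ by its dual $\widehat{\phi}\colon\tau'\to\tau$, however, reverses the roles of source and target; what saves us is that $\widehat{\widehat{\phi}} = \phi$ and $\deg\widehat{\phi} = \deg\phi = d$, so running the same construction with $\widehat{\phi}$ in the role of "$\phi$" uses the \emph{existence} of a degree-$d$ homotopy class from $\tau$ to $\tau'$ in the direction we need, and this is symmetric in $\tau,\tau'$. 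The main obstacle, then, is not any single calculation but getting the bookkeeping around $\widehat{\phi}\phi = [d]$ versus $\phi\widehat{\phi} = [d]$ exactly right — i.e.\ tracking on which curve each "multiplication by $d$" lives — and confirming that the preservation-of-$C$ condition for $\Gamma_0(M)$-structures really does dualize as claimed; everything else is the routine verification that a multiplicative map on generators extends to a ring homomorphism.
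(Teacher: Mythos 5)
Your construction is correct, and it transports $\alpha$ across $\phi$ via the same element the paper uses ($\beta=\phi\alpha\widehat{\phi}$, up to the direction convention), but your verification that $\Z[d\alpha]\simeq\Z[\beta]$ is genuinely different from the paper's. The paper applies the $\Tate_\nu$ functor, identifies the isogenies with $2\times 2$ matrices over $\Z_\nu$, and uses invariance of the characteristic polynomial under cyclic permutation to conclude that $\beta$ and $d\alpha$ have the same minimal polynomial; this is slicker in that no well-definedness check is needed (one just maps $\Z[X]/(m)$ into $\End(\tau')$), and the same char-poly trick is reused later for claim (c.) of Theorem \ref{Theorem-Structure}, so the machinery pays for itself. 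Your route is more elementary: the relation $\widehat{\phi}\phi=[d]$ gives $(\phi\alpha\widehat{\phi})(\phi\alpha'\widehat{\phi})=d\,\phi\alpha\alpha'\widehat{\phi}$ directly, with no Tate modules. One caution on presentation: the phrase ``send $d\alpha\mapsto\phi\alpha\widehat{\phi}$ and extend multiplicatively'' by itself requires checking that every integer polynomial relation satisfied by $d\alpha$ is also satisfied by $\phi\alpha\widehat{\phi}$; your own reformulation as the restriction of $\gamma\mapsto\tfrac{1}{d}\phi\gamma\widehat{\phi}$, which is a unital ring homomorphism $\End(\tau)\otimes\Q\rar\End(\tau')\otimes\Q$ carrying $1\mapsto\tfrac{1}{d}\phi\widehat{\phi}=1$ and $(d\alpha)^n\mapsto d^{\,n-1}\phi\alpha^n\widehat{\phi}\in\End(E',C')$, supplies exactly that, so you should lead with it rather than treat it as an afterthought. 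The bookkeeping points you flag are fine: $\widehat{\phi}(C')=\widehat{\phi}\phi(C)=dC\subseteq C$ holds for any $d$, and since $\deg\widehat{\phi}=\deg\phi$ the conclusion is insensitive to which dual representative of the homotopy class you pick.
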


\begin{proof}Choose representatives $\phi:(E,C)\rar(E',C')$ for $[\phi]:\tau\rar\tau'$, let $\alpha\in\End(E,C)$, and consider the diagram
\[(E',C')\xrightarrow{\phi}(E,C)\xrightarrow{\alpha}(E,C)\xrightarrow{\widehat{\phi}}(E',C')\pd\]
Let $\beta=\widehat{\phi}\alpha\phi\in\End(E',C')$. The conclusion is trivial if $\alpha\in\Z$, so assume otherwise.

Fix any prime $\nu$ different from the characteristic of $F$. Applying the $\Tate_\nu$ functor to the diagram above and choosing bases for $\Tate_\nu(E)$ and $\Tate_\nu(E')$ allows us to identify the isogenies above with $2\times 2$ matrices over $\Z_\nu$.

Since $\beta\notin\Z$, the minimal polynomial of $\beta$ is equal to the characteristic polynomial of $\Tate_\nu(\widehat{\phi}\alpha\phi)$. The trace and determinant of a matrix product are invariant under cyclic permutations of the terms, so this is also equal to the characteristic polynomial of $\Tate_\nu(\phi\widehat{\phi}\alpha)=d\Tate_\nu(\alpha)$. It follows that $\beta$ and $d\alpha$ have the same minimal polynomial, so $\Z[d\alpha]\hookrightarrow\End(\tau')$, as claimed.\end{proof}

%In particular, if 
%
%%In particular, if $

Let $\tau\in Y_0(M)(F)$ and let $G$ be an undirected subgraph of $\s{G}'_\ell(\Gamma_0(M);F)$. The above lemma implies that if $d=\dist_{G}(\tau,\tau')$, then for all $\alpha\in\End(\tau)$, $\Z[\ell^d\alpha]\hookrightarrow\End(\tau')$. This hints at a structural relationship between the graph $G$ and the multiplicative monoid
\[\End_\ell(\tau)=\set{\alpha\in\End(\tau)}{\N(\alpha)\in\ell^{\Z_{\ge 0}}}\cm\]
a relationship we will develop and exploit heavily in what follows.

To simplify the next construction, fix representative $\Gamma_0(M)$-structures for each vertex of $G$ and choose compatible representative $\ell$-isogenies for each arc of $G$. We demand that if $\phi$ is chosen as a representative for an arc, then $\widehat{\phi}$ must be chosen as the representative of the dual arc, so that every edge has the form $\{\phi,\widehat{\phi}\}$. For the moment we will identify $G$ with the graph obtained via this choice of representatives. If
\[x:(E_1,C_1)\xrightarrow{\phi_1}(E_2,C_2)\rar\cdots\rar(E_n,C_n)\xrightarrow{\phi_n}(E_1,C_1)\]
is a directed cycle in $G$ based at $(E_1,C_1)$, we set
\[\xi_{(E_1,C_1)}(x)=\widehat{\phi}_1\cdots\widehat{\phi}_n\in\End(E_1,C_1)\text{.}\]
By convention, $\xi_{(E_1,C_1)}(x)=1$ if and only if $x$ is the trivial cycle based at $\tau$. We therefore obtain for each vertex $(E,C)$ a monoid homomorphism
\[\xi_{(E,C)}:\{\text{directed cycles in $G$ based at $(E,C)$}\}\rar\End(E,C)\]
where the operation on the left is concatenation (if $a$ and $b$ are paths with $a$ ending at the origin of $b$, then $ab$ denotes the path obtained by first following $a$ and then following $b$).

Forgetting our choices of representatives for the elements of $G$, we obtain a family of monoid homomorphisms $\{\xi_\tau\}_\tau$ where for each vertex $\tau$, $\xi_\tau$ is a map from directed cycles in $G$ based at $\tau$ to elements of the monoid $\End_\ell(\tau)$ defined above. Because of the many choices involved in the above construction, we explicitly avoid asserting any sort of canonicity for $\{\xi_\tau\}_\tau$.

\begin{theorem}\label{Theorem-Structure}Let $G$ be an undirected subgraph of $\s{G}_\ell(M;F)$. There is a family $\{\xi_\tau\}_\tau$ of monoid homomorphisms indexed by the vertices of $G$ such that
\[\xi_\tau:\{\text{directed cycles in $G$ based at $\tau$}\}\rar\End_\ell(\tau)\]
satisfying the following claims: If $x$ is a directed cycle in $G$ based at $\tau$, then
\begin{itemize}\item[a.]$\mathrm{N}(\xi_\tau(x))=\ell^{|x|}$ where $|x|$ is the length of $x$;
\item[b.]If $\xi_\tau(x)$ is irreducible then $x$ is irreducible;
\item[c.]For any other $\tau'$ lying on $x$, either $\xi_\tau(x)\in\Z$ and $\xi_\tau(x)=\pm\xi_{\tau'}(x)$, or $\xi_\tau(x)\notin\Z$ and there exists an irreducible quadratic polynomial $F\in\Z[t]$ such that $F(\xi_\tau(x))=F({\xi_{\tau'}(x)})=0$;
\item[d.]$\xi_\tau(x)\in\Z$ if and only if $x$ is contractible; and
\item[e.]Each $\xi_\tau$ induces an injective group homomorphism
\[\widetilde{\xi}_\tau:\pi_1(G,\tau)\rar\frac{\End_\ell(\tau)}{\Z\cap\End_\ell(\tau)}\text{.}\]
%\item[f.]If $\alpha\in\End_\ell(\tau)$ is irreducible, there exists (...)
%\item[g.]If $x$ is simple, 
%If $\tau$ is not isogenous (of any degree) to an elliptic point, then $\widetilde{\xi}_\tau$ is an isomorphism.
\end{itemize}
\end{theorem}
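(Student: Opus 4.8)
The family $\{\xi_\tau\}_\tau$ has already been constructed in the paragraphs above, so the plan is simply to verify (a.)--(e.) for it. I would dispose of (a.) at once: the norm $\N$ on $\End(\tau)$ is the isogeny degree, which is multiplicative, and $\xi_\tau(x)$ is a composite of $|x|$ duals of $\ell$-isogenies (the arcs of $G$), so $\N(\xi_\tau(x))=\ell^{|x|}$. For (b.) I would argue by contraposition: if $x=x_1x_2$ is a concatenation of nontrivial directed cycles at $\tau$, then $\xi_\tau(x)=\xi_\tau(x_1)\xi_\tau(x_2)$, and by (a.) each factor has norm $\ell^{|x_i|}\ge\ell$; since the units of the monoid $\End_\ell(\tau)$ are exactly its norm-$1$ elements, neither factor is a unit, so $\xi_\tau(x)$ is reducible.

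For (c.) the idea is to rotate the cycle. Given a vertex $\tau'$ occurring on $x$, split the edge-list of $x$ there, say $x\colon \tau=E_1\to\cdots\to E_j=\tau'\to\cdots\to E_n\to E_1$, and put $a=\widehat{\phi}_1\circ\cdots\circ\widehat{\phi}_{j-1}\colon \tau'\to\tau$ and $b=\widehat{\phi}_j\circ\cdots\circ\widehat{\phi}_n\colon \tau\to\tau'$ (each respects the $\Gamma_0(M)$-structure, because every $\widehat{\phi}_i$ does). Then $\xi_\tau(x)=ab\in\End(\tau)$, while applying $\xi_{\tau'}$ to the rotation of $x$ gives $ba\in\End(\tau')$. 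Exactly as in the proof of Lemma~\ref{Lemma-End-Transfer} --- pass to $\Tate_\nu$ for some $\nu\ne\mathrm{char}\,F$ and use that the trace and determinant of a matrix product are invariant under cyclic permutation --- $ab$ and $ba$ have the same characteristic polynomial $F(t)=t^2-\tr(ab)\,t+\deg(ab)\in\Z[t]$. If $\xi_\tau(x)\in\Z$ then $F(t)=(t-\xi_\tau(x))^2$, so $ba-\xi_\tau(x)$ is nilpotent in the domain $\End(\tau')$ and hence zero; if $\xi_\tau(x)\notin\Z$ then $F$ is an irreducible quadratic annihilating both $\xi_\tau(x)$ and $\xi_{\tau'}(x)$. (The $\pm$ in the statement absorbs the non-canonical choices in the construction of $\{\xi_\tau\}_\tau$.)

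Claim (d.) is the crux, and (e.) will then follow formally. For the direction ``$x$ contractible $\Rightarrow\xi_\tau(x)\in\Z$'' I would observe that a backtrack $\cdots\phi\,\widehat{\phi}\cdots$ in $x$ contributes to $\xi_\tau(x)$ the central factor $\widehat{\phi}\circ\widehat{\widehat{\phi}}=\widehat{\phi}\circ\phi=[\ell]$, so $\xi_\tau(x)=\ell^{m}\,\xi_\tau(x_{\mathrm{red}})$ where $x_{\mathrm{red}}$ is the freely reduced closed walk representing the class of $x$ in $\pi_1(G,\tau)$; if that class is trivial then $\xi_\tau(x)=\ell^{m}\in\Z$. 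Because $\End(\tau)\cap\Q=\Z$, this same identity reduces the converse to the case $x$ reduced and nontrivial, where I must show $\xi_\tau(x)\notin\Z$. Writing the edges of $x$ as $\phi_1,\dots,\phi_n$ and $\psi=\phi_n\circ\cdots\circ\phi_1\in\End(E_1)$, one has $\xi_\tau(x)=\widehat{\psi}$; if this lay in $\Z$ then by (a.) it would equal $\pm\ell^{k}$ with $n=2k$ and $k\ge1$, forcing $\psi=\pm[\ell^k]$ and $\ker\psi=E_1[\ell^k]\cong(\Z/\ell^k)^2$, which is not cyclic. I would then prove by induction that the cumulative kernels $H_i=\ker(\phi_i\circ\cdots\circ\phi_1)$, each of order $\ell^i$, are all cyclic: if $H_i$ is cyclic then $H_{i-1}=\ell H_i$, a short computation identifies the ``return subgroup'' $\ker\widehat{\phi}_i=\phi_i(E_i[\ell])$ with the image of $E_1[\ell]$ in $E_{i+1}=E_1/H_i$, and so the no-backtrack condition $\ker\phi_{i+1}\ne\ker\widehat{\phi}_i$ becomes equivalent to $H_{i+1}\not\supseteq E_1[\ell]$, i.e.\ to $H_{i+1}$ being cyclic. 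In particular $H_{2k}=\ker\psi$ would be cyclic, a contradiction. For (e.), by (d.) the composite of $\xi_\tau$ with the projection $\End_\ell(\tau)\to\End_\ell(\tau)/(\Z\cap\End_\ell(\tau))$ annihilates exactly the contractible cycles, so it factors through an injective map on $\pi_1(G,\tau)$; and the target is a group, since $\alpha\bar{\alpha}=\N(\alpha)\in\Z\cap\End_\ell(\tau)$ exhibits $\bar\alpha$ as an inverse of $\alpha$ modulo $\Z\cap\End_\ell(\tau)$.

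The step I expect to be the main obstacle is the cyclicity induction inside (d.): it rests on a careful analysis of how a non-backtracking chain of $\ell$-isogenies moves through the $\ell$-power torsion of $E_1$, and in particular of how $\ker\widehat{\phi}_i$ sits among the $\ell+1$ possible kernels of the next step. Some additional care will also be required around loops of the isogeny graph (self-dual versus non-self-dual) and around the elliptic points of $Y_0(M)$, where homotopy and isomorphism of isogenies genuinely diverge and the definition of $\{\xi_\tau\}_\tau$ is most delicate.
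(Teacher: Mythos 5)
Your proposal is correct and follows essentially the same route as the paper: (a.) and (b.) from multiplicativity of the norm, (c.) via Tate modules and cyclic invariance of the characteristic polynomial, (d.) from the equivalence between backtracking, divisibility of $\xi_\tau(x)$ by $\ell$, and non-cyclicity of the cumulative kernels, and (e.) as a formal consequence of (d.). The only difference is organizational: in (d.) the paper inducts on $|x|$, locating the first index where the cumulative kernel fails to be cyclic and extracting a backtrack there, whereas you first freely reduce the cycle and then show a reduced nontrivial cycle keeps all cumulative kernels cyclic---two contrapositive packagings of the same key fact, and the elliptic-point/representative-choice subtlety you flag is exactly the one the paper handles by its convention that dual arcs carry dual representatives.
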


\begin{remark}\label{Remark-Structure-Theorem}We pause here for some remarks related to claim (e.) above. First of all, the monoid quotient given there is a group; inversion is induced by conjugation in $\End(\tau)$.

Secondly, self-dual loops in $G$ contribute $2$-torsion to $\pi(G,\tau)$: If $G$ is connected and $\s{T}$ is a spanning tree for $G$, then $G-\s{T}$ consists of edges (finitely many, in this setting). Let $s$ be the number of self-dual loops in $G-\s{T}$, and let $t$ be the number of all other edges. We have $\pi_1(G,\tau)\simeq (\Z/2\Z)^{*s}*\Z^{*t}$. Note that $\pi_1(G,\tau)$ contains an element of order $2$ only if $\End_\ell(\tau)$ contains a square root of $-\ell$.

Thirdly, if $G$ is a connected component and $G$ contains no elliptic points, then the group homomorphism $\widetilde{\xi}_\tau$ is an isomorphism for each $\tau\in G$.

\end{remark}

\begin{proof} Let $\{\xi_\tau\}_\tau$ be the family of monoid homomorphisms constructed before the statement of the theorem. Claim (a.) and the contrapositive of claim (b.) both follow directly from the construction.

Claim (c.) is proven by applying an appropriately chosen Tate functor to $x$ and remembering (as in the proof of Lemma \ref{Lemma-End-Transfer}) that the characteristic polynomial of a product of $2\times 2$ matrices is invariant under cyclic permutations of the terms.

Claim (d.) is proven by induction on $|x|$ in a series of if and only if statements. The case $|x|=0$ is trivial so assume $|x|>0$. The following are equivalent:
\begin{itemize}\item[i.]$x$ is contractible,
\item[ii.]There exist $\tau',\tau''\in Y_0(M)$ and an arc $[\phi]:\tau'\rar\tau''$ such that $x$ has the form
\[x:\tau\underbrace{\rightarrow\cdots\rightarrow}_a\tau'\xrightarrow{[\phi]}\tau''\xrightarrow{[\widehat{\phi}]}\tau'\underbrace{\rightarrow\cdots\rightarrow}_b\tau\]
where $y=ab$ is contractible, and
%\item[iii.]There exists a vertex $\tau_1$ on $x$ such that $\xi_{\tau_1}(x)=\ell\xi_{\tau_1}(y)$ where $y$ is contractible and $|y|<|x|$.
\item[iii.]$\xi_{\tau}(x)\in\Z$.\end{itemize}

(i.$\Leftrightarrow$ii.) follows from the construction of $\s{G}_\ell(M;F)$ as an undirected graph and a routine characterization of contractible cycles on an undirected graph. (ii.$\Rightarrow$iii.) follows from claim (c.), the fact that $\xi_{\tau'}([\phi][\widehat{\phi}])=\ell$, and the inductive hypothesis.

It remains to prove (iii.$\Rightarrow$ii.) If $\xi_{\tau}(x)\in\Z\cap\End_\ell(\tau)$ and $|x|>0$, then $\ell\mid\xi_{\tau}(x)$ in $\End_\ell(\tau)$. Fix representatives as in the discussion preceding the theorem and suppose that
\[x:(E_1,C_1)\xrightarrow{\psi_1}\cdots\rightarrow (E_n,C_n)\xrightarrow{\psi_n}(E_1,C_1)\]
where for each $i$, $(E_i,C_i)$ represents $\tau_i$, and $\tau_1=\tau$. The endomorphism $\xi_{(E_1,C_1)}(x)=\widehat{\psi}_1\cdots\widehat{\psi}_n$ of $(E_1,C_1)$ is divisible by $\ell$, so its kernel contains $E_1[\ell]$. For $k$ with $0\le k<n$, let $\eta_k=\widehat{\psi}_{n-k}\cdots\widehat{\psi}_n$. Since $\ker(\eta_0)$ is cyclic but $\ker(\eta_{n-1})$ is not, there is a least $k$ such that $\ker(\eta_k)$ is not cyclic. Then $\ker(\widehat{\psi}_{n-k}\widehat{\psi}_{n-k+1})=E_{n-k+2}[\ell]$, so $\widehat{\psi}_{n-k}\widehat{\psi}_{n-k+1}$ is multiplication by $\ell$ on the underlying elliptic curve of the $\Gamma_0(M)$-structure $(E_{n-k+2},C_{n-k+2})=(E_{n-k},C_{n-k})$. It follows from how we chose representatives that $\psi_{n-k+1}=\widehat{\psi}_{n-k}$. Taking isomorphism classes of vertices and homotopy classes of arcs yields vertices $\tau'=\tau_{n-k}$, $\tau''=\tau_{n-k+1}$, and an arc $[\phi]=[\psi_{n-k}]:\tau'\rar\tau''$ with the desired properties. Finally, to prove that the remainder cycle
\[y:\tau_1\xrightarrow{[{\psi}_1]}\cdots\xrightarrow{[{\psi}_{n-k-1}]}\tau_{n-k}\xrightarrow{[{\psi}_{n-k+2}]}\tau_{n-k+3}\rightarrow\cdots\rightarrow \tau_1\]
is contractible, note that $\xi_{\tau_1}(x)\in\Z$ implies $\xi_{\tau_{n-k}}(x)\in\Z$ by (c.), so since $\xi_{\tau_{n-k}}(x)=\ell\xi_{\tau_{n-k}}(y)$, it follows that $\xi_{\tau_{n-k}}(y)\in\Z$. Because $|y|<|x|$, $y$ is contractible by the inductive hypothesis.

Claim (e.) follows immediately from claim (d.).\end{proof}

If $G$ is a connected component of $\s{G}_\ell'(\Gamma_0(M);F)$ then either every vertex of $G$ is ordinary or every vertex of $G$ is supersingular. We may therefore distinguish between the ordinary components and supersingular components of $\s{G}_\ell'(\Gamma_0(M);F)$. 

\begin{itemize}\item By Theorem \ref{Theorem-Structure}.e, an ordinary component $G$ has at most one cycle, and the regularity of $G$ (away from elliptic points) implies that $G$ is infinite (it is either an infinite tree or an infinite volcano).

\item There is a supersingular component iff $\mathrm{char}(F)=p>0$, in which case the supersingular component is unique. In contrast with the ordinary components, the supersingular component of $\s{G}_\ell'(\Gamma_0(M);F)$ is finite, and its structure can be rather complicated.

\end{itemize}

%The ordinary components are relatively easy to describe since the endomorphism rings of ordinary elliptic curves are subrings of imaginary quadratic fields, hence commutative. By Theorem x, it follows that ordinary components of $\s{G}_\ell(\Gamma_0(M);F)$ have at most one cycle, and by regularity (away from elliptic points) such components are always infinite. By contrast, there is a supersingular component only if $\mathrm{char}(F)>0$, in which case it is unique and finite . The structure of the supersingular component is rather more difficult to describe completely, (...) possible applications in cryptography.

We will show that despite their apparent complexity, supersingular components of isogeny graphs resembles ordinary components ``locally.'' The key is the following lemma of Goren and Lauter.

\begin{lemma}[Goren--Lauter lemma]\label{Lemma-GL}Let $\tau\in Y_0(M)(F)$ and let $p=\mathrm{char}(F)>0$. If $\alpha,\beta\in\End(\tau)$ satisfy $\alpha\beta\ne\beta\alpha$, then $4\N(\alpha)\N(\beta)\ge Mp$.\end{lemma}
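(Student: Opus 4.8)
\emph{Proof strategy.} The plan is to run the standard discriminant comparison in the supersingular endomorphism ring (the argument of Goren and Lauter). First I would observe that non-commutativity of $\alpha$ and $\beta$ forces $\End(\tau)$ to be non-commutative, hence $\tau$ supersingular; fixing a representative $\Gamma_0(M)$-structure $(E,C)$ one has $\End(\tau)\simeq\mathcal{O}:=\End(E,C)$, and since $N$---hence $M$---is invertible in $F$ we have $p\nmid M$, so by the classical description of the supersingular points of $X_0(M)$ over $\Fbar{p}$ the ring $\mathcal{O}$ is an Eichler order of level $M$ in the definite quaternion algebra $B=B_{p,\infty}$ (ramified exactly at $p$ and $\infty$). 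From this I would extract two inputs: the norm form on $B$ is positive definite, so that $\langle x,y\rangle:=\operatorname{Trd}(x\bar y)$ is a positive-definite symmetric $\Z$-bilinear pairing with $\langle x,x\rangle=2\N(x)$; and, in this normalization, the Gram determinant of a $\Z$-basis of an Eichler order of level $M$ in $B$ equals the square of its reduced discriminant, namely $(Mp)^2$.

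Next I would introduce the lattice $\Lambda=\Z\cdot 1+\Z\alpha+\Z\beta+\Z\alpha\beta\subseteq\mathcal{O}$. Since in a quaternion algebra the elements $1,\alpha,\beta,\alpha\beta$ are $\Q$-linearly independent as soon as $\alpha\beta\ne\beta\alpha$, the lattice $\Lambda$ has full rank, so its discriminant with respect to $\langle\,,\,\rangle$ satisfies $\operatorname{disc}(\Lambda)=[\mathcal{O}:\Lambda]^2(Mp)^2\ge (Mp)^2$. The upper bound would come from Hadamard's inequality applied to the (positive-definite) Gram matrix of $(1,\alpha,\beta,\alpha\beta)$:
\[\operatorname{disc}(\Lambda)\le\langle 1,1\rangle\langle\alpha,\alpha\rangle\langle\beta,\beta\rangle\langle\alpha\beta,\alpha\beta\rangle=2\cdot 2\N(\alpha)\cdot 2\N(\beta)\cdot 2\N(\alpha\beta)=16\,\N(\alpha)^2\N(\beta)^2\cm\]
using $\N(1)=1$ and multiplicativity $\N(\alpha\beta)=\N(\alpha)\N(\beta)$. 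Comparing the two estimates gives $(Mp)^2\le 16\,\N(\alpha)^2\N(\beta)^2$, and extracting square roots yields $4\N(\alpha)\N(\beta)\ge Mp$, as desired.

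The one step I would treat with care rather than cite as folklore is the discriminant input---that the correct normalization of the reduced-trace pairing assigns to an Eichler order of level $M$ in $B_{p,\infty}$ the Gram determinant $(Mp)^2$---since it is precisely this that pins the constant to $4$. I expect this to be the only real obstacle; everything else is elementary linear algebra over $\Z$. It is reassuring that the argument really only needs the divisibility $Mp\mid\operatorname{disc}_{\mathrm{red}}(\mathcal{O})$, which is robust: $p$ divides $\operatorname{disc}_{\mathrm{red}}(\mathcal{O})$ because $B$ ramifies at $p$, and $M$ divides it because the cyclic subgroup $C$ imposes the Eichler level at the primes dividing $M$; so any imprecision about the local structure of $\mathcal{O}$ away from $p$ costs nothing.
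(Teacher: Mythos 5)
Your argument is correct and is exactly the one the paper invokes: the paper simply notes that $\End(\tau)$ is an Eichler order of level $M$ in $B_{p,\infty}$ with discriminant $(Mp)^2$ and defers to Section~2.1 of Goren--Lauter, whose proof is precisely your comparison of the Gram determinant of $\Z+\Z\alpha+\Z\beta+\Z\alpha\beta$ against the order's discriminant via Hadamard's inequality. The one point you flag for care---that the reduced-trace normalization gives Gram determinant $(Mp)^2$ for a level-$M$ Eichler order---is exactly the input the paper states explicitly before citing Goren--Lauter, so there is no gap.
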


\begin{proof}If $\End(\tau)$ is not commutative, then $\tau$ is supersingular and $\End(\tau)$ is an Eichler order of level $M$ in a quaternion algebra $B$ ramified at $p$ and $\infty$; the discriminant of $\s{O}$ is $(Mp)^2$. The proof now proceeds as in \cite{GorenLauter} Section 2.1, which treats the case $M=1$.\end{proof}

For example, if $x$ and $y$ are directed cycles based at $\tau$ in the supersingular component $G$ of $\s{G}_\ell'(\Gamma_0(M);F)$, and the homotopy classes of $x$ and $y$ do not commute in $\pi(G,\tau)$, then combining the Goren--Lauter lemma with Theorem \ref{Theorem-Structure} yields $|x|+|y|\ge\log_\ell(\frac{Mp}{4})$. Colloquially, if $Mp$ is large compared to $\ell$, short cycles in the graph $\s{G}_\ell'(\Gamma_0(M);F)$ cannot be too close together. This structural restriction on the supersingular isogeny graph may be thought of as a kind of second-order {\it girth} condition.

Given $\tau\in Y_0(M)(F)$ and $r\ge 0$, let $\s{N}_\ell^r(\tau)$ be the subgraph of $\s{G}_\ell'(\Gamma_0(M);F)$ obtained by taking the union of all paths of length $\le r$ originating from $\tau$. We consider $\s{N}_\ell^r(\tau)$ as being rooted at $\tau$.

\begin{lemma}\label{Lemma-Local-Structure}Let $\tau\in Y_0(M)(F)$ and $r\ge 0$. Suppose either that $\tau$ is ordinary or that $\tau$ is supersingular and $p=\mathrm{char}(F)$ satisfies $p>\max\{\frac{4}{M}\ell^{4r},\ell^{2r}\}$. Then,
\begin{itemize}\item[a.] $\s{N}_\ell^r(\tau)$ contains at most one simple cycle,
\item[b.] $\s{N}_\ell^r(\tau)$ contains at most one elliptic point,
\item[c.] If $\s{N}_\ell^r(\tau)$ contains both a cycle and an elliptic point, then the cycle is a loop based at the elliptic point, and
\item[d.] If $\tau'\in\s{N}_\ell^r(\tau)$ is non-elliptic and $\mathrm{dist}(\tau,\tau')<r$, then $\deg_{\s{N}_\ell^r(\tau)}(\tau')=\ell+1$.\end{itemize}\end{lemma}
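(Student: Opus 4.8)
The plan is to translate everything into the language of Theorem \ref{Theorem-Structure}: a simple cycle through $\tau$ of length $\le r$ gives an element $\xi_\tau(x) \in \End_\ell(\tau)$ with $\N(\xi_\tau(x)) = \ell^{|x|} \le \ell^r$ that is not in $\Z$ (by claim (d.), since the cycle is non-contractible), while an elliptic point $\tau' \in \s{N}_\ell^r(\tau)$ at distance $d \le r$ gives, via Lemma \ref{Lemma-End-Transfer} applied along a shortest path, an element $\alpha \in \End(\tau)$ with $\N(\alpha) \le \ell^{2d} \le \ell^{2r}$ that is not in $\Z$ (a unit generating the larger-than-$\{\pm1\}$ automorphism group at $\tau'$, pulled back, is non-scalar). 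The ordinary case is immediate from the fact that $\End(\tau)$ is commutative: two non-scalar elements of a commutative ring, together with claim (e.) of Theorem \ref{Theorem-Structure} (injectivity of $\widetilde{\xi}_\tau$), force any two simple cycles through $\tau$ to have commuting homotopy classes, hence $\pi_1$ is abelian of rank $\le 1$ near $\tau$, which after unwinding gives (a.)--(c.). For (d.), regularity away from elliptic points is exactly the statement that every non-elliptic vertex of $\s{G}_\ell'(\Gamma_0(M);F)$ has degree $\ell+1$, recorded just before Lemma \ref{Lemma-Isogeny-Graph-Transfer}; the only subtlety is ensuring that when $\dist(\tau,\tau')<r$ all $\ell+1$ neighbors of $\tau'$ actually lie inside the radius-$r$ ball, which is automatic.

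For the supersingular case I would argue by contradiction in each of (a.)--(c.): in every case a failure produces two non-commuting elements $\alpha,\beta \in \End(\tau)$ with $\N(\alpha), \N(\beta)$ both at most $\ell^{2r}$. Concretely: two distinct simple cycles through $\tau$ with non-commuting homotopy classes give (via $\xi_\tau$ and claim (a.)) $\alpha,\beta$ with norms $\le \ell^r$; a cycle plus a separate elliptic point, or two elliptic points, give a cycle-element of norm $\le \ell^r$ and a pulled-back unit of norm $\le \ell^{2r}$ (or two such units), which fail to commute because they generate non-commuting subrings after conjugating into $\End(\tau)$. The Goren--Lauter lemma (Lemma \ref{Lemma-GL}) then forces $4\N(\alpha)\N(\beta) \ge Mp$, i.e. $p \le \tfrac{4}{M}\ell^{4r}$, contradicting the hypothesis $p > \tfrac{4}{M}\ell^{4r}$. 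The second hypothesis $p > \ell^{2r}$ is what I expect to need for (c.) and for the fine structure of a single cycle: it guarantees that within radius $r$ an element of $\End_\ell(\tau)$ of norm a power of $\ell$ at most $\ell^{2r}$ cannot be divisible by $p$, so that the arithmetic genuinely ``looks ordinary'' — in particular a cycle not based at the elliptic point cannot be pushed through it without creating a non-commuting pair, which is again killed by Goren--Lauter.

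The main obstacle will be bookkeeping the exponents so that they match the stated bounds $p > \max\{\tfrac{4}{M}\ell^{4r}, \ell^{2r}\}$. The delicate point is the elliptic-point contributions: an elliptic point at distance $d$ contributes, through Lemma \ref{Lemma-End-Transfer}, a scaling factor $\ell^d$ on each side (out and back), so its endomorphism has norm on the order of $\ell^{2d}$ rather than $\ell^{d}$; combined with a cycle-element of norm $\le \ell^{r}$ this gives a product of norms $\le \ell^{3r}$, and combined with a second elliptic point $\le \ell^{4r}$ — which is exactly why the bound is $\ell^{4r}$ and not $\ell^{2r}$. I would need to be careful that the ``shortest path to $\tau'$ has length $\le r$'' is used with the right constant, that dualization in Theorem \ref{Theorem-Structure}(c.) lets me compare $\xi_\tau$ and $\xi_{\tau'}$ up to an irreducible quadratic relation (so non-commutativity transports correctly), and that the non-self-dual-loop conventions from Remark \ref{Remark-Structure-Theorem} do not secretly reduce a ``cycle'' to something contractible. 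Once the norm accounting is pinned down, each part is a one-line appeal to Goren--Lauter or to commutativity of $\End(\tau)$ in the ordinary case.
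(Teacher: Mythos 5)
Your overall strategy---converting cycles and elliptic points in $\s{N}_\ell^r(\tau)$ into elements of endomorphism rings via Theorem \ref{Theorem-Structure} and Lemma \ref{Lemma-End-Transfer} and then feeding non-commuting pairs into the Goren--Lauter lemma---is the paper's, and your treatment of (a.) and (d.) is essentially right (modulo the bookkeeping you flag: a simple cycle of $\s{N}_\ell^s(\tau)$ yields a \emph{based} cycle of length up to $2s$, hence an endomorphism of norm up to $\ell^{2s}$, not $\ell^{s}$; the product of two such norms is $\le\ell^{4s}$, which is exactly where $\frac{4}{M}\ell^{4r}$ comes from). But there are two genuine gaps. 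First, in (b.) your assertion that two pulled-back elliptic units ``fail to commute because they generate non-commuting subrings'' breaks down precisely in the hard case, namely when the two elliptic points $\tau_1=(E,C_1)$ and $\tau_2=(E,C_2)$ have the \emph{same} $j$-invariant: then $u_1$ and $u_2$ generate the same quadratic field, and nothing prevents their images from commuting. The paper handles this case by a different device: the composite isogeny along a shortest path is an endomorphism $\phi$ of the single curve $E$ with $\phi(C_1)=C_2\ne C_1$, so $\phi\notin\Z[u]$; maximality of $\Z[u]$ then forces $\phi u\ne u\phi$ in $\End(E)$, and Goren--Lauter applied in $\End(E)$ (level $1$) gives $4\ell^{d}\ge p$. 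This is where the second bound in the hypothesis on $p$ is actually consumed---not in (c.) as you guess.

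Second, your argument for (c.) is logically inverted. The element $\alpha=\xi_{\tau'}(ax\widehat{a})$ and the unit $u$ at the elliptic point need not fail to commute---in the ordinary CM situation they always commute---so you cannot reach the conclusion by contradiction alone. The paper shows they \emph{must} commute (otherwise Goren--Lauter contradicts $Mp>4\ell^{4r}$, since $\N(\alpha)\N(u)\le\ell^{4r}$), deduces $\alpha\in\Z[u]$ with $\alpha\notin\Z$, hence that $\ell$ splits or ramifies in $\Z[u]$, and then uses that $\Z[u]$ has class number one to produce $\pi\in\Z[u]$ of norm $\ell$, i.e., a loop at the elliptic point; the conclusion follows because, by (a.), that loop is the only simple cycle available. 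This constructive use of the commuting branch (via class number one) is the key idea missing from your sketch, and without it part (c.) does not follow.
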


\begin{proof}If $p\in\{2,3\}$, the inequality $p>\max\{\frac{4}{M}\ell^{4r},\ell^{2r}\}$ is only satisfied when $r=0$ in which case the conclusion is trivial. Thus, we will assume without loss that either $\tau$ is ordinary or that $\tau$ is supersingular and $\mathrm{char}(F)\ne 2,3$. Let $G$ denote the connected component of $\tau$.

(a.) If $G$ is acyclic or has only one simple cycle, then we are done, so assume that $G$ has two or more simple cycles (in which case $\tau$ is supersingular). Let $s\ge 0$ be least so that $\s{N}_\ell^s(\tau)$ contains two distinct simple cycles. Then, there exist directed cycles $x$ and $y$ based at $\tau$ of length $\le 2s$ whose homotopy classes do not commute in $\pi_1(G,\tau)$. Applying $\xi_\tau$ to $x$ and $y$ yields elements $\alpha,\beta\in\End(\tau)$ such that $\alpha\beta\ne\beta\alpha$ and $\N(\alpha),\N(\beta)\le\ell^{2s}$. By Lemma \ref{Lemma-GL}, we have $Mp\le 4\ell^{4s}$. Thus, if $p>\frac{4}{M}\ell^{4r}$ we must have $r<s$, so $\s{N}_\ell^r(\tau)$ contains at most one simple cycle.

(b.)\ Suppose that $\tau_1,\tau_2\in G$ are distinct elliptic vertices. Because $\mathrm{char}(F)\ne2,3$, the group $\End(\tau_i)^\times$ is cyclic of order $4$ or $6$. Set $w(\tau_i)=\tfrac{1}{2}|\End(\tau_i)^\times|$ ($i=1,2$) and let $d=\dist_G(\tau_1,\tau_2)$. We have two cases:
\begin{itemize}\item If $w(\tau_1)\ne w(\tau_2)$ then $\Z[u_1]\hookrightarrow\End(\tau_1)$ and $\Z[u_2]\hookrightarrow\End(\tau_2)$ where $u_1$ and $u_2$ are roots of unity generating distinct quadratic extensions of $\Q$. By Lemma \ref{Lemma-End-Transfer}, there is an embedding $\Z[\ell^d u_1]\hookrightarrow\End(\tau_2)$. Since $u_1$ and $u_2$ cannot commute in the quaternion algebra $\End(\tau_2)\otimes\Q$, $\ell^d u_1$ and $u_2$ do not commute in $\End(\tau_2)$, so $\tau$ is supersingular. It follows from the Goren--Lauter lemma that $4\mathrm{N}(\ell^d u_1)\mathrm{N}(u_2)=4\ell^{2d}\ge Mp$, and therefore $d>2r$ (because $Mp>\ell^{4r}$). This proves that at most one of $\tau_1,\tau_2$ is a vertex of $\s{N}_\ell^r(\tau)$.
\item If $w(\tau_1)=w(\tau_2)$, then there exists an elliptic curve $E/F$ with $j(E)\in\{0,1728\}$ and distinct subgroups $C_1,C_2\subseteq E(F)$ cyclic of order $M$ such that $(E,C_i)$ represents $\tau_i$ ($i=1,2$). Following a path $\tau_1\rar\tau_2$ of minimal length in $G$ yields an endomorphism $\phi$ of the elliptic curve $E$ of norm $\ell^d$ such that $\phi(C_1)=C_2$. Let $u$ generate the group $\End(E)^\times=\End(E,C_1)^\times$. Since $u(C_1)\subseteq C_1$ but $\phi(C_1)\not\subseteq C_1$, we have $\phi\notin\Z[u]$. Because $\Z[u]$ is a maximal quadratic order, it follows that $\phi$ and $u$ do not commute in $\End(E)$. Thus, $\tau$ is supersingular and the Goren--Lauter lemma yields $4\mathrm{N}(\phi)\mathrm{N}(u)=4\ell^{d}\ge p$. Thus $d>2r$ (because $p>4\ell^{2r}$), and again we conclude that at most one of $\tau_1,\tau_2$ is a vertex of $\s{N}_\ell^r(\tau)$.\end{itemize}

(c.)\ Let $x$ and $\tau'$ denote be the unique simple cycle and the unique elliptic point on $\s{N}_\ell^r(\tau)$, respectively. If $a:\tau\rar\tau'$ is a path of minimal length from $\tau'$ to a vertex $\tau''$ on $x$, then $y=ax\widehat{a}$ is a directed cycle in $\s{N}_\ell^r(\tau)$  based at $\tau'$ (with either direction assigned to $x$). Let $\alpha=\xi_{\tau'}(ax\widehat{a})$, so $\mathrm{N}(\alpha)=\ell^{|x|+2d}$ where $|x|\le 2r$ and $d\le 2r$.

Fix a generator $u$ for $\End(\tau')^\times$. If $\alpha u\ne u\alpha$, then by Lemma \ref{Lemma-GL},
\[4\mathrm{N}(\alpha)\mathrm{N}(u)=4\ell^{|x|+2d}\ge Mp>4\ell^{4r}\]
which is impossible because $|x|+2d\le 4r$.

Therefore, we must have $\alpha u=u\alpha$, so $\alpha\in\Z[u]$ (since $\Z[u]$ is a maximal quadratic order). Because $ax\widehat{a}$ is not contractible, $\alpha\notin\Z$, so it follows that $\ell$ is either split or ramified in $\Z[u]$. Since $\Z[u]$ has class number $1$, it follows that there is $\pi\in\Z[u]\subseteq\End_\ell(\tau')$ such that $\mathrm{N}(\pi)=\ell$, and the coarse isomorphism class of $\pi$ (as an $\ell$-isogeny) is a loop $[\pi]:\tau'\rar\tau'$.

%It follows that in addition $d=0$ and that $x=[\pi]$.

Claim (d.) follows from the fact that all non-elliptic vertices in $\s{G}_\ell'(\Gamma_0(M);F)$ have degree $\ell+1$.\end{proof}

\subsection{Lower bounds on polar conditions}\label{S2-polar}

The next step is to use Lemma \ref{Lemma-Local-Structure} to formulate lower bounds on polar conditions $\s{G}_\ell'(\Gamma_0(M);F)$.

For $n\ge 2$ and $i>-n$, let $\s{T}(n,i)$ denote the infinite rooted tree such that the root has degree $n+i$ and every other vertex has degree $n$. If $r\ge  0$, let $\s{T}^r(n,i)$ be the subgraph of $\s{T}(n,i)$ induced on the vertex set $\set{v}{\mathrm{dist}_{\s{T}(n,i)}(v_0,v)\le r}$ where $v_0$ is the root of $\s{T}(n,i)$. The graph $\s{T}^r(n,i)$ is a full rooted tree of depth $r$.

When $\s{T}$ is a tree, we say that a nonempty vertex subset $Q\subseteq\s{T}$ is {\it quasipolar} if for all vertices $v\in\s{T}$ the condition $\deg(v,Q)=1$ implies that either $v\in Q$ or that $v$ is a {\it leaf} of $\s{T}$. If $P$ is a polar condition on $\s{T}(n,i)$, then $P\cap\s{T}^r(n,i)$ is quasipolar on $\s{T}^r(n,i)$; conversely, if $Q\subseteq\s{T}^r(n,i)$ is quasipolar, there exists a polar condition $P\subseteq\s{T}(n,i)$ such that $Q=P\cap\s{T}^r(n,i)$. We define
\[b^r(n,i)=\min_{\substack{P\subseteq\s{T}(n,i)\\\text{$P$ polar}\\v_0\in P}}|P\cap\s{T}^r(n,i)|=\min_{\substack{Q\subseteq\s{T}^r(n,i)\\\text{$Q$ quasipolar}\\v_0\in P}}|Q|\]
where $P$ ranges over all polar conditions on $\s{T}(n,i)$ that contain the root and $Q$ ranges over all quasipolar subsets of $\s{T}^r(n,i)$ that contain the root.

\begin{lemma}\label{Lemma-Polar-Trees}If $r\ge 0$, $n\ge 2$, and $i>-n$, we have
\[b^r(n,i)=1+(n+i)\sum_{j=0}^{\floor{r/2}-1}(n-1)^j\pd\]\end{lemma}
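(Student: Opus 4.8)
I would prove the formula for $b^r(n,i)$ by induction on $r$, exploiting the recursive structure of the full rooted tree $\s{T}^r(n,i)$: removing the root from $\s{T}^r(n,i)$ leaves $n+i$ copies of $\s{T}^{r-1}(n-1,0)$, each rooted at a child of $v_0$. The key observation is that a quasipolar set $Q$ containing $v_0$ must avoid putting exactly one of its elements adjacent to any internal vertex, and the minimization over such $Q$ decouples nicely across the subtrees hanging off the root.

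First I would record the base cases: $b^0(n,i)=1$ (the singleton $\{v_0\}$ is quasipolar since $v_0$ is itself a leaf of $\s{T}^0$), and $b^1(n,i)=1+(n+i)$, since if $Q=\{v_0\}$ alone then every child $w$ of $v_0$ has $\deg(w,Q)=1$ and is a leaf of $\s{T}^1$, which is permitted --- wait, that already gives $b^1=1$, contradicting the claimed formula $1+(n+i)$. So the first real step is to get the combinatorics of quasipolarity exactly right: a quasipolar $Q$ forbids $\deg(v,Q)=1$ only at \emph{internal} (non-leaf) vertices $v\notin Q$, so for $r\ge 1$ the root $v_0$ is internal, and if $v_0\in Q$ we need $\deg(v_0,Q)\neq 1$, i.e. $v_0$ has either $0$ or $\ge 2$ neighbors in $Q$; since $v_0$'s neighbors are its $n+i$ children, either none of them lies in $Q$ or at least two do. Pursuing the "none" branch forces each child $w$ to satisfy $\deg(w,Q)$ counted within its own subtree, but $w$ is internal in $\s{T}^r$ when $r\ge 2$, so the constraint propagates. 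This is where the factor $(n+i)\sum(n-1)^j$ comes from: I would show the cheapest quasipolar set is obtained by taking $v_0$ together with \emph{every vertex at even distance} $\le 2\lfloor r/2\rfloor - 2$ from $v_0$ in a suitable sense --- more precisely, set up the recursion $b^r(n,i) = 1 + (n+i)\, c^{r-1}(n-1)$ where $c^{s}(m)$ is the minimal size of a quasipolar set in $\s{T}^{s}(m,0)$ that does \emph{not} contain the root but is "balanced" at the root, and then show $c^{s}(m) = 1 + m\sum_{j=0}^{\lfloor (s-1)/2\rfloor - 1}(m-1)^j$ or a similar shifted expression, matching the telescoping.

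The cleanest route is probably to prove two lemmas simultaneously by induction on $r$: (1) the minimal quasipolar set through the root has size $1+(n+i)\sum_{j=0}^{\lfloor r/2\rfloor-1}(n-1)^j$, and (2) the minimal quasipolar set \emph{not} through the root but "polar at the root" (both-or-neither condition at $v_0$) has a companion value; each feeds the other at depth $r-1$ on the subtrees $\s{T}^{r-1}(n-1,0)$. For the upper bound I exhibit the explicit set --- the root plus, alternating by level, all vertices on even levels up to level $2\lfloor r/2\rfloor-2$ --- and check it is quasipolar (internal vertices off the chosen levels see $0$ neighbors in $Q$ except possibly at the deepest selected level, where their other neighbors are leaves). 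For the lower bound I argue that any quasipolar $Q\ni v_0$ must, at each even level it "reaches," contribute at least $(n-1)$ times its contribution at the previous selected level, by the pigeonhole/propagation argument: if an internal vertex has $\le 1$ neighbor in $Q$ it must have $0$, pushing the obligation down one more level and multiplying by the branching factor $n-1$.

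\textbf{Main obstacle.} The hard part is the lower bound bookkeeping --- formalizing "the obligation propagates down and multiplies by $n-1$" into a clean inductive inequality without drowning in cases about which levels $Q$ chooses to populate. The subtlety is that $Q$ need not be level-homogeneous, so one cannot simply say "$Q$ contains all of level $2k$ or none of it." I would handle this by a potential/weighting argument: assign to each vertex $v\in Q$ at depth $d$ a charge, and show the quasipolar condition forces the total charge below the root to dominate $(n-1)^{k}$ after $k$ "bounces," then sum the geometric series. The parity floor $\lfloor r/2\rfloor$ appears because each bounce consumes two levels (down along an arc, and the return is forced by the both-or-neither structure at the vertex where $Q$ "turns on"), and the tree runs out of depth after $\lfloor r/2\rfloor$ bounces. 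Once the propagation inequality is set up correctly, the arithmetic $1+(n+i)\sum_{j=0}^{\lfloor r/2\rfloor-1}(n-1)^j$ falls out by summing the geometric series; everything else is routine.
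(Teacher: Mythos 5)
Your overall shape (a two-level recursion producing a geometric series in $n-1$, with the parity floor coming from the fact that each ``fix'' defers the obligation by two levels) is the right one, but the proposal contains a genuine error in how the quasipolar condition is applied, and this error propagates into both bounds. The definition constrains only internal vertices \emph{not} in $Q$: if $\deg(v,Q)=1$ then $v\in Q$ or $v$ is a leaf, so a vertex already in $Q$ carries no constraint whatsoever. Your step ``if $v_0\in Q$ we need $\deg(v_0,Q)\neq 1$'' is therefore false, and the ensuing ``both-or-neither'' dichotomy at the root, together with the companion quantity $c^s(m)$ built around it, has no basis. (Relatedly, your worry about $b^1$ is unfounded: for $r\le 1$ the sum in the formula is empty, so the formula gives $b^1(n,i)=1$, consistent with $Q=\{v_0\}$ being quasipolar when all children are leaves.) The correct propagation is the one the paper uses: since $v_0\in Q$, each child $v_k\notin Q$ already sees one neighbor in $Q$ and is internal for $r\ge 2$, so either $v_k\in Q$ or some child $v_k'$ of $v_k$ lies in $Q$. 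The subtree $\s{T}[v_k]$ is $\s{T}^{r-1}(n,-1)$ (root of degree $n-1$, other internal vertices of degree $n$), not $\s{T}^{r-1}(n-1,0)$ as you write; restricting $P$ to it gives a quasipolar set containing $v_k$ or $v_k'$, hence of size at least $b^{r-2}(n,-1)$, and $b^r(n,i)\ge 1+(n+i)\,b^{r-2}(n,-1)$ closes the lower bound by induction with no charge or potential argument needed. Your proposed weighting scheme is never actually carried out, and as described it rests on the flawed reading of the definition.

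The upper bound is also broken as stated. Taking ``all vertices on even levels up to $2\floor{r/2}-2$'' gives $(n+i)(n-1)^{s-1}$ vertices at level $s$, a factor of $(n-1)^{s/2}$ more than the $(n+i)(n-1)^{s/2-1}$ the formula allows, and the resulting set is not even quasipolar: for $r=4$ your set is the root plus all of level $2$, and every level-$3$ vertex is internal, outside $Q$, and sees exactly one neighbor in $Q$ (its parent). The correct construction is much thinner: below each vertex that would otherwise see exactly one neighbor in $Q$, select exactly \emph{one} child two levels down, so that level $2k$ contributes $(n+i)(n-1)^{k-1}$ vertices (one for each of the $n-1$ newly deficient vertices created at level $2k-1$ by each selected vertex at level $2k-2$). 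With the lower-bound recursion fixed and this construction verified, the arithmetic does telescope to $1+(n+i)\sum_{j=0}^{\floor{r/2}-1}(n-1)^j$ as you intend.
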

\begin{proof}First, we claim that the right hand side is a lower bound on $b^r(n,i)$.  This is trivial when $r=0$ or $r=1$. Suppose that $r\ge 2$ and that $P$ is a polar condition on $\s{T}(n,i)$ that contains the root. If $v$ is any vertex of $\s{T}^r(n,i)$ let $\s{T}[v]$ denote the subtree of $\s{T}^r(n,i)$ rooted at $v$ and containing all descendants of $v$. Let $v_1,\ldots,v_{n+i}$ denote the daughters of the root $v_0$ and observe that for each $k$, either $v_k\in P$ or $v_k'\in P$ for some daughter $v_k'$ of $v_k$. In the former case, $\s{T}[v_k]\simeq\s{T}^{r-1}(n,-1)$ and $|P\cap\s{T}[v_k]|\ge b^{r-1}(n,-1)$; in the latter case, $\s{T}[v_k']\simeq\s{T}^{r-2}(n,-1)$ and $|P\cap\s{T}[v_k]|\ge |P\cap\s{T}[v_k']|\ge b^{r-2}(n,-1)$. Since $b^{r-1}(n,-1)\ge b^{r-2}(n,-1)$ and $v_0\in P$, it follows that
\[|P\cap\s{T}^r(n,i)|=1+\sum_{k=1}^{n+i}|P\cap\s{T}[v_k]|\ge 1+(n+i)b^{r-2}(n,-1)\text{,}\]
and our claim follows by induction.

On the other hand, a straightforward construction yields a polar condition $P$ on $\s{T}(n,i)$ with
\[|P\cap\set{v}{\dist_{\s{T}(n,i)}(v_0,v)=r}|=\left\{\begin{array}{ll}1&\text{if $r=0$,}\\
(n+i)(n-1)^{r/2-1}&\text{if $r$ is even and $r\ge 2$,}\\
0&\text{if $r$ is odd,}\end{array}\right.\]
from which we conclude that the right hand side is also an upper bound on $b^r(n,i)$.\end{proof}

A graph containing a unique simple cycle is called a {\it volcano} (see for example \cite{BLS}); the {\it crater} of a volcano is its unique simple cycle. For $n\ge 3$ and $c\ge 1$ let $\s{V}(n,c)$ denote the infinite $n$-regular volcano with a crater of length $c$, rooted at some vertex $v_0$ on the crater (the particular choice of root being otherwise unimportant). For $r\ge 0$, let $\s{V}^r(n,c)$ be the subgraph of $\s{V}(n,c)$ induced on the vertex set $\set{v}{\dist_{\s{V}(n,c)}(v_0,v)\le r}$.

%As a general principle, a vertex subset $P'\subseteq G$ extends to a polar condition 

\begin{lemma}\label{Lemma-Polar-Volcanoes}Let $r\ge 0$, $n\ge 3$, and $c\ge 3$. Suppose that $P$ is a polar condition on $\s{V}(n,c)$ containing the root $v_0$. If $r\ge 2$, then
\[|P\cap\s{V}^r(n,c)|\ge (n-1)^{\floor{r/2}}+(n-1)^{\floor{r/2}-1}\]
\end{lemma}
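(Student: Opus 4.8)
The plan is to prove Lemma \ref{Lemma-Polar-Volcanoes} by reducing it to the already-established bound for trees, Lemma \ref{Lemma-Polar-Trees}. The key observation is that a volcano $\s{V}(n,c)$ with $c\ge 3$, once we cut the crater, looks locally like the regular tree $\s{T}(n,0)$ away from the crater, and each crater vertex is the root of a copy of $\s{T}(n,-2)$ (it has $n-2$ ``downward'' neighbors off the crater, since two of its $n$ edges lie on the crater). So I expect the bound to come from summing tree-bounds $b^{\bullet}(n,-2)$ over the crater vertices that fall within distance $r$ of $v_0$.

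First I would set up notation: let $v_0$ be the root, lying on the crater $v_0,v_1,\dots,v_{c-1},v_0$. For a polar condition $P$ on $\s{V}(n,c)$ with $v_0\in P$, I would analyze $P$ by looking at the two crater-neighbors $v_1$ and $v_{c-1}$ of $v_0$. Because $P$ is polar and $v_0\in P$, the degree-one-forces-membership condition does \emph{not} directly say anything about $v_0$ (it's already in $P$), but I can apply polarity at $v_0$'s neighbors. The cleanest route: since $v_0\in P$ contributes $1$ to $\deg(w,P)$ for each neighbor $w$ of $v_0$, if such a neighbor $w$ has no \emph{other} neighbor in $P$ then $\deg(w,P)=1$ forces $w\in P$. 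Iterating this ``propagation'' argument along the crater and down the tree-branches hanging off the crater is exactly the mechanism used in the proof of Lemma \ref{Lemma-Polar-Trees}, and I would mirror that induction. Concretely, for each crater vertex $v_j$ with $\dist(v_0,v_j)\le r$, I'd show $|P\cap\s{T}[v_j]|\ge b^{r-\dist(v_0,v_j)}(n,-2)$ where $\s{T}[v_j]$ is the subtree hanging off $v_j$ (of branching type $\s{T}(n,-2)$), and I'd also need that $P$ meets the crater itself in a controlled way. Summing over the $\approx 2\min(\floor{r},\ \text{something})$ crater vertices within distance $r$, together with the leading $1$ for $v_0$, should produce a sum of the form $1+\sum_j (n-2)(n-1)^{(\dots)}$, and after bounding this geometric-type sum from below I would extract the clean estimate $(n-1)^{\floor{r/2}}+(n-1)^{\floor{r/2}-1}$.

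Alternatively — and this may be the slicker packaging — I would note that if $P$ is polar on $\s{V}(n,c)$ with $c\ge 3$, then restricting attention to the branch $\s{T}[v_0]$ of tree-type $\s{T}(n,-2)$ \emph{rooted at $v_0$ but with $v_0$ given its two extra crater-edges}, one recovers a quasipolar-type set; more usefully, $v_0$ together with its two crater neighbors $v_1,v_{c-1}$ (each of which must lie in $P$ or have a daughter in $P$, by polarity and the fact that $v_0\in P$ supplies one unit of degree) each spawn a subtree $\cong \s{T}^{r-1}(n,-1)$ or deeper. So $|P\cap\s{V}^r(n,c)|\ge 1 + 2\,b^{r-1}(n,-1)\ge 1+2(n-1)^{\floor{(r-1)/2}-?}\dots$; I would then check that this already exceeds $(n-1)^{\floor{r/2}}+(n-1)^{\floor{r/2}-1}$ for $r\ge 2$ by a direct comparison of the two closed forms using Lemma \ref{Lemma-Polar-Trees}. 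The arithmetic comparison is routine; the honest content is the structural claim that each of $v_0,v_1,v_{c-1}$ forces a full tree-branch's worth of elements of $P$ below it, which is where I'd spend the care.

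The main obstacle I anticipate is the bookkeeping near the crater when $r$ is small relative to $c$ versus large: if $2r < c$ the ball $\s{V}^r(n,c)$ doesn't ``see'' the cycle close up and $\s{V}^r(n,c)$ is just a tree $\s{T}^r(n,-2)$ rooted at $v_0$ with $v_0$ having degree $n$ (two crater edges plus $n-2$ branch edges), so I must make sure the polarity propagation at $v_0$'s crater-neighbors is handled correctly — in particular that I'm not double-counting the contribution of $v_0$ to the degrees of $v_1$ and $v_{c-1}$, and that the ``leaf'' exception in quasipolarity is applied only at genuine leaves of $\s{V}^r(n,c)$. Once that edge-accounting is pinned down, the inductive step is formally identical to the one in Lemma \ref{Lemma-Polar-Trees}, just with the root's branching number shifted to accommodate the crater, so I'd present it as a short adaptation of that proof rather than repeating it in full.
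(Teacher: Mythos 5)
Your overall strategy---hang tree-branches off the crater and invoke Lemma \ref{Lemma-Polar-Trees}---is the right one, and it is the paper's, but neither of your two concrete routes closes the argument, and the second contains a real error. The dichotomy ``each of $v_1,v_{c-1}$ must lie in $P$ or have a daughter in $P$'' is false: since $v_1$ has \emph{three} kinds of neighbors ($v_0$, its off-crater daughters, and the next crater vertex $v_2$), polarity at $v_1$ only forces the trichotomy ``$v_1\in P$, \emph{or} a daughter of $v_1$ lies in $P$, \emph{or} $v_2\in P$''---the propagation can continue along the crater and leave the branch under $v_1$ entirely empty of $P$. Worse, even granting your claim, the bound $1+2\,b^{r-1}(n,-1)$ is quantitatively too weak: by Lemma \ref{Lemma-Polar-Trees} one has $b^{r-1}(n,-1)=\sum_{j=0}^{\floor{(r-1)/2}}(n-1)^j$, so for $n=10$, $r=4$ your bound gives $1+2\cdot 10=21$ while the target is $9^2+9=90$. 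The deferred ``routine arithmetic comparison'' would therefore fail. The source of the loss is that you count only the single vertex $v_0$ (the ``$1+$'') where you should be counting the entire branch $\s{T}[v_0]\simeq\s{T}^r(n,-2)$ hanging below $v_0$; since $b^r(n,-2)=(n-1)^{\floor{r/2}}$, that one branch already supplies the dominant term. Your first route has the complementary problem: you cannot sum $b^{r-d}(n,-2)$ over \emph{all} crater vertices within distance $r$, because nothing forces every such branch to meet $P$ (again the propagation may dive into a single branch and never return to the crater), and you leave the needed control of $P$ along the crater unresolved.

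The paper's proof uses exactly two disjoint subtrees. First, $\s{T}'=\s{T}[v_0]\simeq\s{T}^r(n,-2)$, on which $P$ restricts to a quasipolar set containing the root, giving $|P\cap\s{T}'|\ge b^r(n,-2)=(n-1)^{\floor{r/2}}$. Second, fix a crater path $v_0\rar v_1\rar v_2$ and apply the correct trichotomy above: in the three cases take $\s{T}''=\s{T}[v_1]\simeq\s{T}^{r-1}(n,-2)$, $\s{T}''=\s{T}[v_1']\simeq\s{T}^{r-2}(n,-1)$ for a daughter $v_1'\in P$, or $\s{T}''=\s{T}[v_2]\simeq\s{T}^{r-2}(n,-2)$. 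The weakest of the three contributions is $b^{r-2}(n,-2)=(n-1)^{\floor{r/2}-1}$, and adding it to $b^r(n,-2)$ gives precisely the stated bound. If you repair your argument by (a) replacing the false dichotomy with this trichotomy and (b) counting the full branch below $v_0$ rather than $v_0$ alone, you recover the paper's proof.
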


\begin{proof}Let $x$ be the crater of $\s{V}(n,c)$.

\begin{figure}[h]\begin{center}\includegraphics[scale=0.5]{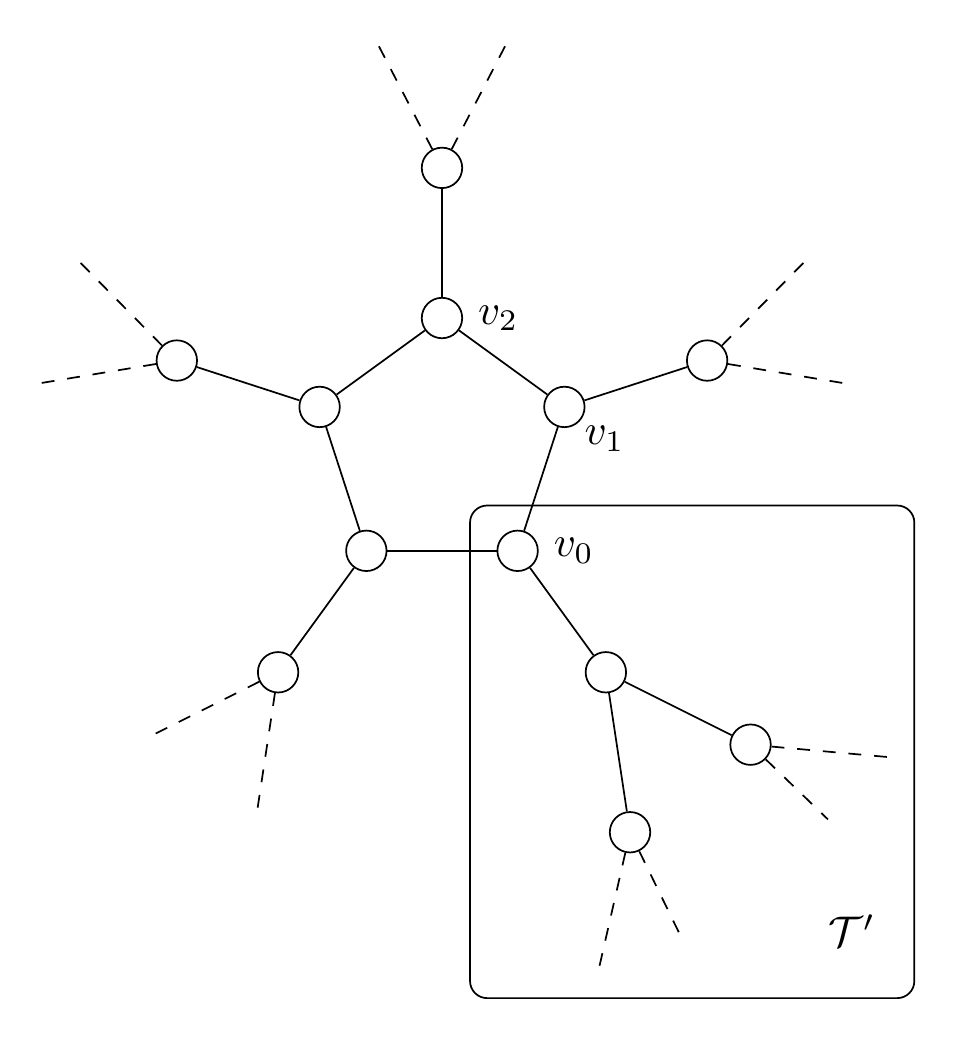}\end{center}\end{figure}

For any vertex $v$ of $\s{V}^r(n,c)$ let $\s{T}[v]$ denote the subtree of $\s{V}^r(n,c)$ rooted at $v$ and containing all descendants of $v$ (in the figure $\s{T}'=\s{T}[v_0]$); formally, $\s{T}[v]$ is the subgraph induced on the vertex set
\[\set{w}{\text{$\dist(x,w)\ge\dist(x,v)$ and every path $w\rar x$ contains $v$}}\]
where $x$ is the crater of $\s{V}(n,c)$.

Let $\s{T}'=\s{T}[v_0]$ (as in the figure) and note that $\s{T}'\simeq\s{T}^r(n,-2)$.

\noindent Choose vertices $v_1$ and $v_2$ on the crater so that there is a path $v_0\rar v_1\rar v_2$ of length $2$ (as in the figure). Remembering that $v_0\in P$, we define $\s{T}''$ according to three cases:
\begin{itemize}\item[i.] If $v_1\in P$, let $\s{T}''=\s{T}[v_1]$. In this case $\s{T}''\simeq\s{T}^{r-1}(n,-2)$.
\item[ii.] If $v_1\notin P$, and there exists a daughter $v_1'$ of $v_1$ on $\s{T}[v_1]$ such that $v_1'\in P$, let $\s{T}''=\s{T}[v_1']$, so $\s{T}''\simeq\s{T}^{r-2}(n,-1)$.
\item[iii.] Otherwise we must have $v_2\in P$ since $P$ is a polar condition and $v_0\in P$. In this case, let $\s{T}''=\s{T}[v_2]$, so $\s{T}''\simeq\s{T}^{r-2}(n,-2)$.\end{itemize}

In each of these cases, $P\cap\s{T}'$ and $P\cap\s{T}''$ are quasipolar and contain the roots of $\s{T}'$ and $\s{T}''$, respectively. Thus,
\[|P\cap\s{V}^r(n,c)|\ge|P\cap\s{T}'|+|P\cap\s{T}''|\ge b^r(n,-2)+\left\{\begin{array}{ll}b^{r-1}(n,-2)&\text{in case (i.),}\\
b^{r-2}(n,-1)&\text{in case (ii.),}\\
b^{r-2}(n,-2)&\text{in case (iii.).}\end{array}\right.\]
The bound in case (iii.) is the weakest, so Lemma \ref{Lemma-Polar-Trees} yields
\[|P\cap\s{V}^r(n,c)|\ge b^r(n,-2)+b^{r-2}(n,-2)=(n-1)^{\floor{r/2}}+(n-1)^{\floor{r/2}-1}\pd\]\end{proof}

%
%
%$v_1\in P$; in this case, $\s{T}[v_1]\simeq\s{T}^{r-1}(n,-2)$, and
%\[|P\cap\s{V}^r(n,c)|\ge |P\cap\s{T}[v_0]|+|P\cap\s{T}[v_1]|\ge b^r(n,-2)+b^{r-1}(n,-2)\]
%\item There is a daughter $v_1'$ of $v_1$ on $\s{T}[v_1]$ such that $v_1'\in P$; here, $\s{T}[v_1']\simeq\s{T}^{r-2}(n,-1)$, and
%\[|P\cap\s{V}^r(n,c)|\ge |P\cap\s{T}[v_0]|+|P\cap\s{T}[v_1']|\ge b^r(n,-2)+b^{r-2}(n,-1)\]
%\item $v_2\in P$; in this case, $\s{T}[v_1]\simeq\s{T}^{r-1}(n,-2)$, and
%\[|P\cap\s{V}^r(n,c)|\ge |P\cap\s{T}[v_0]|+|P\cap\s{T}[v_2]|\ge b^r(n,-2)+b^{r-2}(n,-2)\]
%\end{itemize}
%Of these three bounds, the last one is the weakest, so
%\[|P\cap\s{V}^r(n,c)|\ge b^r(d,-2)+b^{r-2}(n,-2)=(n-1)^{\floor{r/2}}+(n-1)^{\floor{r/2}-1}\text{.}\]\end{proof}

\begin{lemma}\label{Lemma-Admissible}Let $G$ be a connected component of $\s{G}'_\ell(\Gamma_0(M);F)$ and let $P$ be a polar condition on $G$. Suppose either that $G$ is ordinary or that $G$ is supersingular and $p=\mathrm{char}(F)$ satisfies $p>\max\{\frac{4}{M}\ell^8,4\ell^4\}$.

There exists $\tau\in P$ such that $\tau$ is not elliptic and $\tau$ lies on no cycles of length $\le 2$.\end{lemma}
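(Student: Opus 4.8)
\section*{Proof proposal}

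The plan is to fix $\tau_0\in P$ (recall $P$ is nonempty) and, assuming $\tau_0$ itself is \emph{bad} --- i.e.\ elliptic or lying on a cycle of length $\le 2$ --- to exhibit a \emph{good} vertex of $P$ already inside the ball $\s{N}:=\s{N}_\ell^2(\tau_0)$. The hypothesis on $G$ (ordinary, or supersingular with $p>\max\{\tfrac{4}{M}\ell^8,4\ell^4\}$) is exactly the hypothesis of Lemma \ref{Lemma-Local-Structure} for $r=2$, so that lemma applies at $\tau_0$ and at every neighbour of $\tau_0$ (indeed at every vertex of $G$). In particular $\s{N}$ contains at most one simple cycle and at most one elliptic point, every non-elliptic vertex of $\s{N}$ at distance $<2$ from $\tau_0$ has full degree $\ell+1$, and if $\s{N}$ contains both a cycle and an elliptic point then that cycle is a loop based at the elliptic point. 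Note also that $\deg(\tau_0)\ge 1$ under these hypotheses, since an isolated component cannot be ordinary and the supersingular graph for $p$ large has no isolated vertices.

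The first step is to locate the bad vertices of $\s{N}$. Put $D:=\{\tau_0\}\cup(\text{the unique simple cycle of }\s{N},\text{ if it exists})$; I claim that, \emph{because $\tau_0$ is bad}, every bad vertex of $\s{N}$ lies in $D$, and $|D|\le 2$. For the size: if $\tau_0$ carries a short cycle it is the unique cycle of $\s{N}$, necessarily a loop at $\tau_0$ (then $D=\{\tau_0\}$) or a bigon $\{\tau_0,\sigma\}$ (then $D=\{\tau_0,\sigma\}$); if $\tau_0$ is elliptic without a short cycle then by Lemma \ref{Lemma-Local-Structure}(c) $\s{N}$ has no cycle at all, so $D=\{\tau_0\}$. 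Now let $v\in\s{N}$ be bad. If $v$ is elliptic it is the unique elliptic point of $\s{N}$, and Lemma \ref{Lemma-Local-Structure}(c) forces $v=\tau_0$ (in the bigon case there is no elliptic point at all, and in the other cases the unique elliptic point, if present, must be the base $\tau_0$ of the defect loop). If instead $v$ lies on a cycle $z$ of length $\le 2$, pick a vertex $w$ on a shortest path from $\tau_0$ to $v$ with $\mathrm{dist}(w,v)\le 1$; then $\s{N}_\ell^2(w)$ contains $z$ together with the defect of $\tau_0$ (its loop/bigon, resp.\ the elliptic point $\tau_0$), and the "at most one cycle" clause (resp.\ clause (c)) of Lemma \ref{Lemma-Local-Structure} applied at $w$ forces $z$ to be $\tau_0$'s own defect cycle --- hence $v\in D$ --- or yields a contradiction. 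Consequently \emph{any element of $(P\cap\s{N})\setminus D$ is automatically good}, so it suffices to show $(P\cap\s{N})\setminus D\ne\emptyset$.

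For this last step I would use polarity of $P$ directly. In case $D=\{\tau_0\}$ the vertex $\tau_0$ has at least one neighbour $w$ joined to it by a single edge (an elliptic point carries no loop unless it is also its unique short cycle, and any loop uses at most two of the $\ell+1\ge 3$ degree slots, leaving non-loop edges, each necessarily simple lest a second cycle appear); in case $D=\{\tau_0,\sigma\}$ the bigon uses two slots, leaving $\ell-1\ge 1$ "downward" single-edge neighbours $w$, none adjacent to $\sigma$ (else a triangle would be a second cycle, contradicting Lemma \ref{Lemma-Local-Structure}(a)). In either case $w\notin D$ and $w$ is joined to $D$ by exactly the one edge to $\tau_0$. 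If $w\in P$ we are done. Otherwise polarity gives $\deg(w,P)\ne 1$, while $\deg(w,P)\ge\deg(w,\{\tau_0\})=1$, so $w$ has a $P$-neighbour $u$ along an edge not counted above; that edge cannot land in $D$, so $u\in(P\cap\s{N})\setminus D$. Either way $(P\cap\s{N})\setminus D$ is nonempty and any of its elements is the required $\tau$.

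I expect the main obstacle to be the bookkeeping in the second paragraph --- in particular, ruling out a "foreign" short cycle that only partially enters $\s{N}$ --- and the resolution is precisely the re-centring trick: applying Lemma \ref{Lemma-Local-Structure} at a neighbour $w$ of $\tau_0$ rather than at $\tau_0$, so that a single ball sees both the foreign cycle and $\tau_0$'s defect. A secondary nuisance is that the relevant crater lengths here are $c=1$ (a loop) and $c=2$ (a bigon), which lie outside the range $c\ge 3$ of Lemma \ref{Lemma-Polar-Volcanoes}; this is why the argument in the third paragraph stays elementary (a single-edge/polarity argument) instead of quoting a volcano bound, and one should record in passing that the degenerate possibility $\deg(\tau_0)=0$ is excluded by the standing hypotheses.
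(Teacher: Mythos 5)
Your proposal is correct and follows exactly the route the paper takes: the paper's own proof is just the two-line sketch ``fix a bad $\tau_0\in P$, apply Lemma \ref{Lemma-Local-Structure} with $r=2$, and argue with polarity as in Lemma \ref{Lemma-Polar-Volcanoes}'', which your three paragraphs flesh out in full. The only point worth tightening is the degree count when $\tau_0$ is elliptic (an elliptic vertex of $\s{G}'_\ell(\Gamma_0(M);F)$ may have degree strictly less than $\ell+1$, so ``the loop uses at most two of the $\ell+1$ slots'' does not literally apply there); in that case it is your connectivity remark --- the component of $\tau_0$ is not a single vertex, being infinite in the ordinary case and containing all supersingular points otherwise --- that actually guarantees a non-loop edge at $\tau_0$.
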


\begin{proof}Fix $\tau_0\in P$ and assume without loss that $\tau_0$ is either elliptic or that it lies on a cycle of length $\le 2$. Applying Lemma \ref{Lemma-Local-Structure} with $r=2$ and some simple arguments using the fact that $P$ is a polar condition (like those in the proof of Lemma \ref{Lemma-Polar-Volcanoes}) yield $\tau\in P$ with the desired properties.\end{proof}

\begin{lemma}\label{Lemma-Combinatorial}Let $G$ be a connected component of $\s{G}_\ell(\Gamma_0(M);F)$ and let $P$ be a polar condition on $G$.
\begin{itemize}\item[a.]If $G$ is ordinary then $P$ is infinite.
\item[b.]If $G$ is supersingular and $p=\mathrm{char}(F)$, then for all $r\ge 2$ satisfying $p>\max\{\frac{4}{M}\ell^{4r},4\ell^{2r}\}$ we have $|P-{W}_M|\ge\ell^{\floor{r/2}}+\ell^{\floor{r/2}-1}$ where ${W}_M$ is the set of elliptic points on $Y_0(M)(F)$.\end{itemize}\end{lemma}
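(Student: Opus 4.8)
The plan is to reduce both parts to the structural lemmas already available, after transporting the polar condition from $\s{G}_\ell(\Gamma_0(M);F)$ to $\s{G}'_\ell(\Gamma_0(M);F)$. First I would invoke Lemma~\ref{Lemma-Isogeny-Graph-Transfer}.a: a polar condition $P$ on the component $G$ of $\s{G}_\ell(\Gamma_0(M);F)$ is also a polar condition on $\s{G}'_\ell(\Gamma_0(M);F)$; since $G$ has the same underlying vertex set as a single component $G'$ of $\s{G}'_\ell(\Gamma_0(M);F)$, and since being ordinary or supersingular is intrinsic to a vertex, I may work with $P$ as a polar condition on $G'$, which is ordinary in (a) and supersingular in (b). For (b) I would also record at the outset that, because $r\ge 2$, the hypothesis $p>\max\{\tfrac{4}{M}\ell^{4r},4\ell^{2r}\}$ implies both the hypothesis $p>\max\{\tfrac4M\ell^8,4\ell^4\}$ of Lemma~\ref{Lemma-Admissible} and the hypothesis of Lemma~\ref{Lemma-Local-Structure} for this value of $r$.

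For (a): by the structure theory following Theorem~\ref{Theorem-Structure}, an ordinary component $G'$ is an infinite tree or an infinite volcano, and its exceptional features (a crater, if any, and the finitely many elliptic points) are confined to a bounded region. If $P$ were finite and nonempty, I would fix $\tau_1\in P$ and look at the rooted ball $\s{N}_\ell^r(\tau_1)$ for large $r$; deleting from it the bounded number of child-branches of $\tau_1$ that meet the crater or an elliptic point leaves a full rooted tree ball isomorphic to $\s{T}^r(\ell+1,i)$ for some fixed $i$ with $\ell+1+i\ge 1$ (independent of $r$), on which $P$ restricts to a quasipolar set containing the root. Then Lemma~\ref{Lemma-Polar-Trees} forces $|P|\ge b^r(\ell+1,i)=1+(\ell+1+i)\sum_{j=0}^{\floor{r/2}-1}\ell^j$, which tends to $\infty$ with $r$ — a contradiction, so $P$ is infinite.

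For (b): using Lemma~\ref{Lemma-Admissible} I would fix $\tau\in P$ that is non-elliptic and lies on no cycle of length $\le 2$, and apply Lemma~\ref{Lemma-Local-Structure} to the rooted ball $\s{N}:=\s{N}_\ell^r(\tau)$, so that $\s{N}$ has at most one simple cycle, at most one elliptic point, and if it has both then the cycle is a loop at the elliptic point, while every non-elliptic vertex at distance $<r$ from $\tau$ has degree $\ell+1$. Since all neighbours of a vertex $v$ with $\dist(\tau,v)<r$ already lie in $\s{N}$, polarity of $P$ on $\s{G}'$ makes $P\cap\s{N}$ quasipolar on $\s{N}$ (the condition $\deg=1$ forces membership away from the distance-$r$ boundary), and $\tau\in P\cap\s{N}$. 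I would then split into two cases.
\begin{itemize}
\item If $\s{N}$ has no elliptic point and is either acyclic or contains a cycle through $\tau$, then, rooted at $\tau$, it is isomorphic to $\s{T}^r(\ell+1,0)$ or to $\s{V}^r(\ell+1,c)$ with $c\ge 3$ (a cycle through the non-elliptic $\tau$ has length $\ge 3$, and its presence rules out an elliptic point by Lemma~\ref{Lemma-Local-Structure}). Lemma~\ref{Lemma-Polar-Trees} gives $|P\cap\s{N}|\ge b^r(\ell+1,0)\ge\ell^{\floor{r/2}}+\ell^{\floor{r/2}-1}$ in the first case, and Lemma~\ref{Lemma-Polar-Volcanoes} gives $|P\cap\s{N}|\ge\ell^{\floor{r/2}}+\ell^{\floor{r/2}-1}$ in the second; as $\s{N}$ has no elliptic points, $P\cap\s{N}\subseteq P-{W}_M$.
\item Otherwise $\s{N}$ has an elliptic point, or a cycle avoiding $\tau$; all exceptional behaviour of $\s{N}$ is confined to the subtree below one child $c$ of $\tau$, and I would set $\s{N}'$ to be the connected component of $\tau$ in $\s{N}-\{c\}$. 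Then $\s{N}'$ is acyclic, has no elliptic point, is isomorphic (rooted at $\tau$) to $\s{T}^r(\ell+1,-1)$ since only $\tau$ loses a neighbour, and $P\cap\s{N}'$ is again quasipolar and contains $\tau$; so Lemma~\ref{Lemma-Polar-Trees} gives $|P\cap\s{N}'|\ge b^r(\ell+1,-1)=1+\sum_{j=1}^{\floor{r/2}}\ell^j\ge\ell^{\floor{r/2}}+\ell^{\floor{r/2}-1}$, with $P\cap\s{N}'\subseteq P-{W}_M$.
\end{itemize}
In every case $|P-{W}_M|\ge\ell^{\floor{r/2}}+\ell^{\floor{r/2}-1}$, which is (b).

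I expect the main obstacle to be the pruning step in the second case: verifying that deleting a single child-subtree of $\tau$ really does leave a full tree ball $\s{T}^r(\ell+1,-1)$ with no elliptic point and no cycle, and that the restriction of $P$ to it is still quasipolar. The two facts that make this work — and which I would isolate carefully — are that Lemma~\ref{Lemma-Local-Structure} confines the cycle and the (unique) elliptic point of $\s{N}_\ell^r(\tau)$ to one spot reached from $\tau$ along a unique edge, and that removing an entire child-subtree (rather than merely an edge of the cycle) lowers the degree of $\tau$ alone, so the only vertex whose neighbourhood shrinks is $\tau$ itself, where there is nothing to check since $\tau\in P$. The rest is bookkeeping: the elementary inequality $b^r(\ell+1,i)\ge\ell^{\floor{r/2}}+\ell^{\floor{r/2}-1}$ for $i\in\{0,-1\}$ and $r\ge 2$, immediate from the closed form in Lemma~\ref{Lemma-Polar-Trees}, together with $b^r(\ell+1,i)\to\infty$ whenever $\ell+1+i\ge 1$ for part (a).
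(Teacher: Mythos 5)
Your argument is correct and follows essentially the same route as the paper: transfer $P$ to the undirected graph via Lemma~\ref{Lemma-Isogeny-Graph-Transfer}, choose a non-elliptic base point off short cycles via Lemma~\ref{Lemma-Admissible}, apply Lemma~\ref{Lemma-Local-Structure} to the ball $\s{N}_\ell^r(\tau)$, and case-split into the tree, volcano, and ``prune one child branch'' situations handled by Lemmas~\ref{Lemma-Polar-Trees} and~\ref{Lemma-Polar-Volcanoes}. The only cosmetic difference is in part (a), where the paper simply runs the part (b) analysis (which is unconditional for ordinary components) and lets $r\to\infty$, whereas you argue by contradiction from a possibly elliptic $\tau_1$; invoking Lemma~\ref{Lemma-Admissible} there as well would streamline that step.
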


\begin{proof}By Lemma \ref{Lemma-Isogeny-Graph-Transfer} it is sufficient to prove the result upon replacing $\s{G}_\ell(\Gamma_0(M);F)$ with the undirected graph $\s{G}_\ell'(\Gamma_0(M);F)$. Let $G$ be a connected component of $\s{G}_\ell'(\Gamma_0(M);F)$, let $r\ge 2$, and assume either that $G$ is ordinary or that $G$ is supersingular and $p>\max\{\frac{4}{M}\ell^{4r},4\ell^{2r}\}$.

By Lemma \ref{Lemma-Admissible} we may choose $\tau\in P$ such that $w(\tau)=1$ and such that $\tau$ does not lie on any cycles of length $\le 2$. Let $G_0=\s{N}_\ell^r(\tau)$. Following Lemma \ref{Lemma-Local-Structure} there are four cases:

\begin{itemize}\item ($G_0$ contains no cycles and no elliptic points.) In this case, $G_0$ is isomorphic to $\s{T}^r(\ell+1,0)$ as a rooted graph and $P\cap G_0$ is quasipolar on $G_0$, so $|P\cap G_0|\ge b^r(\ell+1,0)$. Applying \ref{Lemma-Polar-Volcanoes},
\[|P\cap G_0|\ge\ell^{\floor{r/2}}+\ell^{\floor{r/2}-1}\]
and $P\cap G_0$ contains no elliptic points.

\item ($G_0$ contains a cycle $x$ and $\tau\in x$.) In this case, $G_0$ is isomorphic to $\s{V}^r(\ell+1,|x|)$ as a rooted graph and $P\cap G_0$ extends to a polar condition on $\s{V}(\ell+1,|x|)$. Since $|x|\ge 3$, Lemma \ref{Lemma-Polar-Volcanoes} guarantees that $|P\cap G_0|\ge\ell^{\floor{r/2}}+\ell^{\floor{r/2}-1}$ and that $P\cap G_0$ contains no elliptic points.
\item ($G_0$ contains a cycle $x$ and $\tau\notin x$.) Because $G_0$ contains exactly one cycle, there is a unique shortest path $a:\tau\rar x$. Let $e$ be the first edge on this path, and consider $G_0-\{e\}$. The connected component $G_1$ of $G_0-\{e\}$ containing and rooted at $\tau$ is isomorphic as a rooted graph to $\s{T}^r(\ell+1,-1)$. $P\cap G_1$ is quasipolar on $G_1$, so by Lemma \ref{Lemma-Polar-Trees},
\[|P\cap G_0|\ge |P\cap G_1|\ge b^r(\ell+1,-1)\ge \ell^{\floor{r/2}}+\ell^{\floor{r/2}-1}\]
and $P\cap G_1$ contains no elliptic points (by Lemma \ref{Lemma-Local-Structure}).
\item ($G_0$ contains an elliptic point $\tau'$.) Since $\tau\ne\tau'$, we may proceed as in the previous case with $\tau'$ replacing $x$.\end{itemize}
We conclude in every case that $|P\cap G-{W}_M|\ge\ell^{\floor{r/2}}+\ell^{\floor{r/2}-1}$. This proves (b.) directly and it proves (a.) by taking $r\rar\infty$.\end{proof}

%\subsection{Proofs of Theorems \ref{HSThm1} and \ref{HSThm2}}\label{S2-proofs}

\begin{proof}[Proof of Theorem \ref{HSThm1}]Suppose that $V\subseteq\M^*(N;F)$ is stable under the action of the Hecke operator $T_\ell$. The vertex set $\delta_{N,N}\Pi(V)$ is a polar condition on $\s{G}_\ell(\Gamma_0(N);F)$ by Proposition \ref{Proposition-Pi-Polar}.

If there is a cusp $\tau\in X_0(N)(F)$ and $f\in V$ such that $f(\tau)=\infty$, then $V$ is infinite-dimensional by Lemma \ref{Lemma-No-Cusps}. If there is an ordinary $\tau\in Y_0(N)(F)$ and $f\in V$ such that $f(\tau)=\infty$, then an ordinary component $G$ of $\s{G}_\ell(\Gamma_0(N);F)$ meets the polar condition $\delta_{N,N}\Pi(V)$. Since $G\cap \delta_{N,N}\Pi(V)$ is a polar condition on the ordinary component $G$, it is infinite (Lemma \ref{Lemma-Combinatorial}.a). It follows that $\Pi(V)$ is infinite, so $V$ is infinite-dimensional. This proves Theorem \ref{HSThm1}.a.

By the preceding argument, if $V$ is finite-dimensional and $T_\ell$-stable, $\Pi(V)$ consists of supersingular points. Since the Hasse invariant $A$ has a simple root at every supersingular point on $X_1(N)$, there is $r\ge 0$ large enough so that $A^r\cdot V$ contains no modular ratios with poles. This proves Theorem \ref{HSThm1}.b.\end{proof}

\begin{proof}[Proof of Theorem \ref{HSThm2}]With notation as in the statement of the theorem, we have
\[\Pi(V_{\Lambda,\ell}'(F))\subseteq\mathrm{Z}(\Lambda)\pd\]
The sufficiency of conditions (i.) and (ii.) follow directly from Theorem \ref{HSThm1}.a. The sufficiency of condition (iii.) follows from Lemma \ref{Lemma-Combinatorial}.b.\end{proof}

\section{Hecke stability and computation}\label{S3}

We will now demonstrate how to use the Hecke stability theorems to compute spaces of weight $1$ modular forms.

Using the $q$-expansion map at our chosen cusp we will identify modular ratios over a field $\kappa$ (not necessarily algebraically closed) with their images in $\kappa\dblparens{q}$ under $q$-expansion. For a given $P\in\Z$ we let $(q^P)$ denote the subspace of $\kappa\dblparens{q}$ spanned by $\{q^n\}_{n\ge P}$. To {\it compute} a finite-dimensional $W\subseteq F\dblparens{q}$ is to give an algorithm that on input $P\in\Z_{\ge 0}$ outputs a basis for $W\modulo(q^P)$---that is, a basis for $W$ {\it computed to precision $P$}.

%We will now explain how to use the Hecke stability theorems (Theorems x and x) to compute spaces of the form $\S_1(N,\chi;F)$ where $F=\Qbar$ and $F=\Fbar{p}$ where $p\nmid N$ and $\chi:(\Z/N\Z)^\times\rar F^\times$ is an odd character.
%
%For simplicity, fix a cusp $\tau\in X_1(N)(F)$ so that the image of $q$-expansion $\M_1^*(N;F)\rar F((q^{1/N}))$ at that cusp lies in $F\dblparens{q}$. Because we are working with modular ratios of a particular weight, we may unambiguously identify subspaces of $\M_1^*(N;F)$ with their images under $q$-expansion at $\tau$.

%For any $\theta:(\Z/N\Z)^\times\rar F^\times$, let $T_\ell^\theta$ denote the operator on $F\dblparens{q}$ given by
%\[\sum_{n\in\Z}a_nq^n\mapsto \sum_{n\in\Z}a_{\ell n}q^n+\theta(\ell)\sum_{n\in\Z}a_nq^{\ell n}\pd\]
%$T_\ell^\theta$ coincides with the Hecke operator $T_\ell$ on the subspace $\M_1^*(N,\theta;F)$ of $F\dblparens{q}$.

Fix a choice of level $N\ge 1$, a character $\chi:(\Z/N\Z)^\times\rar\Qbar^\times$, and a prime $\ell\nmid N$. Let $K=\Q(\chi)$, and for every nonzero prime ideal $\f{p}\subseteq\s{O}_K$ (including zero) let $\kappa_\f{p}=\s{O}_\f{p}/\f{p}$ (here $\s{O}_\f{p}$ just denotes the $\f{p}$-integral subring of $K$; $\s{O}_0=\kappa_0=K$). The goal of the Hecke stability method is to compute for (almost) all prime ideals $\f{p}\nmid N$, a finite-dimensional $T_\ell$-stable space $V'(\kappa_\f{p})$ of modular ratios such that
\[\S_1(N,\chi;\kappa_\f{p})\subseteq V'(\kappa_\f{p})\subseteq\M_1^*(N,\chi;\kappa_\f{p})\]
where $\chi$ is also used to denote the character obtained by composing with reduction mod $\f{p}$. Theorem \ref{HSThm2} guarantees that $V'(\kappa_0)$ consists of modular forms, but the analogous statements for nonzero $\f{p}$ must be certified.

%For any $\theta:(\Z/N\Z)^\times\rar F^\times$, let $T_\ell^\theta$ denote the operator on $F\dblparens{q}$ given by
%\[\sum_{n\in\Z}a_nq^n\mapsto \sum_{n\in\Z}a_{\ell n}q^n+\theta(\ell)\sum_{n\in\Z}a_nq^{\ell n}\pd\]
%$T_\ell^\theta$ coincides with the Hecke operator $T_\ell$ on the subspace $\M_1^*(N,\theta;F)$ of $F\dblparens{q}$.

\subsection{Integral subspace operations}\label{S3-subspace-ops}

To simplify the exposition of the next few sections, we introduce the notion of an integral subspace operation on Laurent series. Let $W$ be a finite-dimensional subspace of $K\dblparens{q}$ and let $\f{p}$ be a prime of $K$ (possibly zero). Define $\Red_\f{p}(W)$ to be the subspace of $\kappa_\f{p}\dblparens{q}$ obtained by reducing $W\cap \s{O}_\f{p}\dblparens{q}$ modulo $\f{p}$. We say that $W$ has {\it good reduction at $\f{p}$} if $\dim_K(W)=\dim_{\kappa_\f{p}}\Red_\f{p}(W)$.

\begin{definition}\label{Definition-Subspace-Operation}Let $S$ be a set of prime ideals of $K$ such that $0\in S$ (in applications, $S$ will consist of ``good'' primes). An {\it $S$-integral subspace operation} in $K\dblparens{q}$ is a family $\s{F}=\{\s{F}_\f{p}\}_{\f{p}\in S}$ of maps where for each $\f{p}$,
\[\s{F}_\f{p}:\{\text{finite-dimensional subspaces of $\kappa_\f{p}\dblparens{q}$}\}\rar\{\text{finite-dimensional subspaces of $\kappa_\f{p}\dblparens{q}$}\}\]
Satisfying the following: For all $\f{p}$,
\begin{itemize}\item The map $\s{F}_\f{p}$ is monotonic with respect to containment, and
\item If $V\subseteq K\dblparens{q}$ is finite-dimensional and $V$ has good reduction at $\f{p}$, then $\Red_\f{p}\s{F}_0(V)\subseteq\s{F}_\f{p}\Red_\f{p}(V)$.\end{itemize}
\end{definition}

We are primarily concerned with two kinds of subspace operations:
\begin{itemize}\item {\it Intersection operations.} Let $U$ be a subspace of $K\dblparens{q}$ that has good reduction at every nonzero prime ideal in $S$. Define the {\it intersection operation} (with $U$) $\s{I}^U=\{\s{I}_\f{p}^U\}_{\f{p}\in S}$ by $\s{I}_{\f{p}}^U(W)=W\cap\Red_\f{p}(U)$ for all $\f{p}\in S$ and any finite-dimensional $W\subseteq\kappa_\f{p}\dblparens{q}$.
\item {\it Stabilization operations.} Let $T$ be a linear operator on $K\dblparens{q}$ that restricts to an $\s{O}_\f{p}$-module homomorphism $\s{O}_\f{p}\dblparens{q}\rar\s{O}_\f{p}\dblparens{q}$ for all nonzero $\f{p}\in S$. Then, for all $\f{p}\in S$, there exists a unique linear transformation $T:\kappa_\f{p}\dblparens{q}\rar\kappa_\f{p}\dblparens{q}$ such that $\Red_\f{p}\circ T=T\circ\Red_\f{p}$. Define the {\it stabilization operation} (with respect to $T$) $\s{S}^T=\{\s{S}^T_\f{p}\}_{\f{p}\in S}$ by $\s{S}^T_\f{p}(W)=\set{g\in W}{Tg\in W}$ for all $\f{p}\in S$ and any finite-dimensional $W\subseteq \kappa_\f{p}\dblparens{q}$.
\end{itemize}

It is a straightforward exercise to verify that intersection operations and stabilization operations are subspace operations. The uniqueness of $T:\kappa_\f{p}\dblparens{q}\rar\kappa_\f{p}\dblparens{q}$ above is not entirely trivial, but it follows from the finiteness of $\mathrm{Cl}(K)$.

\subsection{The Hecke stability method via integral subspace operations}\label{S3-HSM}

In this section we present the theoretical details of the computation described in Section \ref{S1-wt1}. This computation can be expressed as a composition of intersection operations and stabilization operations on Laurent series, introduced in the previous section. The practical details (precision requirements and linear algebra) of the computations are left to Sections \ref{S3-precision} and \ref{S3-subroutines} (respectively).

We begin by fixing a finite nonempty $\Lambda\subseteq\M_1^*(N,\chi^{-1};K)-\{0\}\subseteq K\dblparens{q}$. It is ideal (but not necessary) that $\mathrm{Z}(\Lambda)\subseteq X_1(N)(K)$ be as small as possible and contain no cusps (so that elements of $V'_\Lambda(K)$ as defined in Section \ref{S1-wt1} vanish at the cusps). Index $\Lambda=\{\lambda_0,\ldots,\lambda_s\}$ and let $U_i=\im[\lambda_i^{-1}]$ where for each $\lambda\in\Lambda$,
\[[\lambda^{-1}]:\S_2(N,\boldsymbol{1};K)\rar\M_1^*(N,\chi;K):g\mapsto g/\lambda\pd\]

For all primes $\f{p}\nmid N$, the reduction map $\S_2(N,\boldsymbol{1};\s{O}_K[\frac{1}{N}])\rar\S_2(N,\boldsymbol{1};\kappa_\f{p})$ is surjective, so $\S_2(N,\boldsymbol{1};K)$ has good reduction everywhere. Let
\[\f{b}=\prod_{\substack{\f{p}\nmid N\\\exists\lambda\in\Lambda\ \lambda\equiv 0\modulo\f{p}}}\f{p}\pd\]
Note that if $\f{p}\nmid N\f{b}$, every member of $\{U_i\}_{0\le i\le s}$ has good reduction at $\f{p}$.

Choose $\ell\nmid N$ and let $S=\set{\f{p}\subseteq\s{O}_K}{\f{p}\nmid \ell N\f{b}}$. Denote by $T_\ell^\chi$ the operator on $K\dblparens{q}$ given by
\[\sum_{n\in\Z}a_nq^n\mapsto\sum_{n\in\Z}a_{\ell n}q^n+\chi(\ell)\sum_{n\in\Z}a_nq^{\ell n}\pd\]$T_\ell^\chi$ coincides with $T_\ell$ on $\M_1^*(N,\chi;F)$ (see Section \ref{S2-ratios}) and it restricts to a module homomorphism $\s{O}_\f{p}\dblparens{q}\rar\s{O}_\f{p}\dblparens{q}$ for any $\f{p}\in S$. We set the following notation:

\begin{itemize}\item For all $i\ge 1$, let
\[\s{F}^{(i)}=\left\{\begin{array}{ll}\s{I}^{U_i}&\text{if $i\le s$}\\
\s{S}^{T_\ell^\chi}&\text{otherwise,}\end{array}\right.\]

\item Let $V_{\Lambda,\ell}^{(0)}(\kappa_\f{p})=\Red_\f{p}(U_0)$, and for all $i\ge 1$, let $V_{\Lambda,\ell}^{(i)}(\kappa_\f{p})=\s{F}_\f{p}^{(i)}V_{\Lambda,\ell}^{(i-1)}(\kappa_\f{p})$.\end{itemize}

Because $\dim V_{\Lambda,\ell}^{(0)}(\kappa_\f{p})=\dim\S_2(N,\boldsymbol{1};\kappa_\f{p})$ is finite and independent of $\f{p}$, there exists $t$ such that $V_{\Lambda,\ell}^{(s+t)}(\kappa_\f{p})$ is a $T_\ell$-stable subspace of $\M_1^*(N,\chi;\kappa_\f{p})$ for all $\f{p}$. In the notation of Section \ref{S1-wt1} (which we will continue to use), $V_\Lambda'(\kappa_\f{p})=V_{\Lambda,\ell}^{(s)}(\kappa_\f{p})$ and $V_{\Lambda,\ell}'(\kappa_\f{p})=V_{\Lambda,\ell}^{(s+t)}(\kappa_\f{p})$. For each $\f{p}$ the space $V_{\Lambda,\ell}'(\kappa_\f{p})$ is $T_\ell$-stable and contains $\S_1(N,\chi;\kappa_{p})$. Schematically,

\[\xymatrixcolsep{3pc}\xymatrix{U_0\ar@{|->}^{\Red_\f{p}}[d]\\\Red_\f{p}(U_0)\ar@{=}[d]\ar@{|-->}[rrr]^{\text{intersection ops.}}&&&V_\Lambda'(\kappa_\f{p})\ar@{=}[d]\ar@{|-->}[rr]^{\text{stabilization ops.}}&&V_{\Lambda,\ell}'(\kappa_\f{p})\ar@{=}[d]\\V_{\Lambda,\ell}^{(0)}(\kappa_\f{p})\ar@{|->}[r]^{\s{I}_\f{p}^{U_1}}_{\s{F}_\f{p}^{(1)}}&V_{\Lambda,\ell}^{(1)}(\kappa_\f{p})\ar@{|->}[r]&\ \cdots\ \ar@{|->}[r]_{\s{F}_\f{p}^{(s)}}^{\s{I}_\f{p}^{U_s}}&V_{\Lambda,\ell}^{(s)}(\kappa_\f{p})\ar@{|->}[r]_{\s{F}_\f{p}^{(s+1)}}^{\s{S}_\f{p}^{T_\ell^\chi}}&\ \cdots\ \ar@{|->}[r]_{\s{F}_\f{p}^{(s+t)}}^{\s{S}_\f{p}^{T_\ell^\chi}}&V_{\Lambda,\ell}^{(s+t)}(\kappa_\f{p})}\]
for each $\f{p}\in S$.
%where the initial space $\Red_\f{p}(U_0)$ is obtained by dividing $\S_2(N,\boldsymbol{1};\Z)$ by $\lambda_0$ and reducing mod $\f{p}$.

\subsection{Nonsurjectivity of reduction}\label{S3-detection}

Next, we will show how the Hecke stability method can be used to produce a family  $\{V'(\kappa_\f{p})\}_{\f{p}\in S}$ where each $V'(\kappa_\f{p})$ is a $T_\ell$-stable subspace of $\M_1^*(N,\chi;\kappa_\f{p})$ containing $\S_1(N,\chi;\kappa_\f{p})$.

First, recall that for {almost all} nonzero $\f{p}\in S$ the reduction map $\S_1(N,\chi;\s{O}_\f{p})\rar\S_1(N,\chi;\kappa_\f{p})$ is surjective. For such $\f{p}$, taking $V'(\kappa_\f{p})=\Red_\f{p}V_{\Lambda,\ell}'(K)$ works, provided that $V_{\Lambda,\ell}'(K)$ also has good reduction at $\f{p}$. We therefore only need to compute $V_{\Lambda,\ell}'(\kappa_\f{p})$ {\it directly} for $\f{p}=0$, for the (finitely many) $\f{p}$ at which reduction is nonsurjective, and for the (finitely many) $\f{p}$ at which $V_{\Lambda,\ell}'(\kappa_0)$ has bad reduction.

Determining the list of primes at which reduction is nonsurjective provides the most difficulty. The idea is that when surjectivity of $\S_1(N,\chi;\s{O}_\f{p})\rar\S_1(N,\chi;\kappa_\f{p})$ fails, the surjectivity of
\[V_{\Lambda,\ell}^{(i)}(K)\cap\s{O}_\f{p}\dblparens{q}\rar V_{\Lambda,\ell}^{(i)}(\kappa_\f{p})\]
must fail for some index $i$.

\begin{definition}Let $\s{F}$ be an $S$-integral subspace operation, let $V$ be a finite-dimensional subspace of $K\dblparens{q}$, let $\f{p}\in S$, and suppose that both $V$ and $\s{F}_0(V)$ have good reduction at $\f{p}$.

We say that $\f{p}$ {\it divides} $\s{F}$ at $V$ if $\Red_\f{p}\s{F}_0(V)\subsetneq\s{F}_\f{p}\Red_\f{p}(V)$, i.e., when the inclusion from Definition \ref{Definition-Subspace-Operation} is proper.\end{definition}

Recall that $\f{p}$ is called an {\it Eisenstein congruence prime} for $(k,N,\chi)$ if there exists an Eisenstein series and a cusp form of that type that are congruent to each other modulo $\f{p}$. Such $\f{p}$ divide the numerator of $\frac{1}{k}\mathrm{B}_{k,\chi}=-\mathrm{L}(1-k,\chi)$.

\begin{proposition}Fix $(N,\chi,\Lambda,\ell)$ and $S$ as above.

Let $L=L'\cup L''\cup L'''$ where
\begin{gather*}L'=\set{\f{p}\in S}{\text{there is $j$ such that $\f{p}$ divides $\s{F}^{(j+1)}$ at $V_{\Lambda,\ell}^{(j)}(K)$}}\cm\\
L''=\set{\f{p}\in S}{\text{there is $j$ such that $V_{\Lambda,\ell}^{(j)}(K)$ has bad reduction at $\f{p}$}}\text{, and}\\
L'''=\set{\f{p}\in S}{\text{$\f{p}$ is an Eisenstein congruence prime for $(1,N,\chi)$}}\pd
\end{gather*}
Then $L$ is finite and it contains all $\f{p}\in S$ such that $\S_1(N,\chi;\s{O}_{\f{p}})\rar\S_1(N,{\chi};\kappa_\f{p})$ is not surjective.
\end{proposition}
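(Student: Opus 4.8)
The plan is to show that the three conditions defining membership in $L$ are exhaustive for failures of surjectivity, then argue finiteness separately. First I would observe that the chain of integral subspace operations $\s{F}^{(1)},\ldots,\s{F}^{(s+t)}$ built in Section \ref{S3-HSM} produces $V_{\Lambda,\ell}'(K)$ from $U_0=\s{F}^{(0)}$, and that $S_1(N,\chi;K)\subseteq V_{\Lambda,\ell}'(K)$. The key point is a ``propagation of good reduction'' argument: suppose $\f{p}\in S$, $\f{p}\notin L''$, and $\f{p}$ divides none of the operations $\s{F}^{(j+1)}$ at $V_{\Lambda,\ell}^{(j)}(K)$; then I claim $\Red_\f{p} V_{\Lambda,\ell}^{(i)}(K)=V_{\Lambda,\ell}^{(i)}(\kappa_\f{p})$ for every $i$, proved by induction on $i$. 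The base case is $\Red_\f{p}(U_0)=V_{\Lambda,\ell}^{(0)}(\kappa_\f{p})$, which holds because $\S_2(N,\boldsymbol1;K)$ has good reduction everywhere (weight-$2$ reduction surjectivity). For the inductive step, the definition of an $S$-integral subspace operation gives $\Red_\f{p}\s{F}^{(i+1)}_0(V_{\Lambda,\ell}^{(i)}(K))\subseteq\s{F}^{(i+1)}_\f{p}\Red_\f{p}(V_{\Lambda,\ell}^{(i)}(K))$; since $\f{p}$ does not divide $\s{F}^{(i+1)}$ at $V_{\Lambda,\ell}^{(i)}(K)$ this is an equality, and since $\f{p}\notin L''$ we have good reduction of $V_{\Lambda,\ell}^{(i)}(K)$, so by the inductive hypothesis the right-hand side equals $\s{F}^{(i+1)}_\f{p}V_{\Lambda,\ell}^{(i)}(\kappa_\f{p})=V_{\Lambda,\ell}^{(i+1)}(\kappa_\f{p})$. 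Taking $i=s+t$ yields $\Red_\f{p}V_{\Lambda,\ell}'(K)=V_{\Lambda,\ell}'(\kappa_\f{p})$.

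Next I would deduce surjectivity of reduction for weight-$1$ cusp forms from this. By Theorem \ref{HSThm2}.i the characteristic-zero space $V_{\Lambda,\ell}'(K)$ consists of genuine weight-$1$ modular forms, and in fact of cusp forms once $\mathrm{Z}(\Lambda)$ contains no cusps (so $V_{\Lambda,\ell}'(K)\subseteq\S_1(N,\chi;K)$, hence $V_{\Lambda,\ell}'(K)=\S_1(N,\chi;K)$); the $q$-expansion level structure identifies $\Red_\f{p}V_{\Lambda,\ell}'(K)$ with the image of $\S_1(N,\chi;\s{O}_\f{p})$ in $\kappa_\f{p}\dblparens q$. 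The containment $\Red_\f{p}\S_1(N,\chi;\s{O}_\f{p})=V_{\Lambda,\ell}'(\kappa_\f{p})\supseteq\S_1(N,\chi;\kappa_\f{p})$ then shows reduction is surjective, \emph{provided} we know $\S_1(N,\chi;\kappa_\f{p})\subseteq V_{\Lambda,\ell}'(\kappa_\f{p})$ — which is exactly where $L'''$ enters. The role of $L'''$ is to handle the case where $V_{\Lambda,\ell}'(\kappa_\f{p})$ could fail to contain a mod-$\f{p}$ weight-$1$ cusp form: a cusp form in characteristic $\f{p}$ is always captured by the intersection operations and the $T_\ell$-stabilization unless a congruence with an Eisenstein series interferes with the injectivity of $[\lambda^{-1}]$ or the identification of the $T_\ell$-eigenspace, which is precisely the Eisenstein-congruence obstruction. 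So the contrapositive reads: if reduction at $\f{p}$ is not surjective, then $\f{p}\in L'\cup L''\cup L'''$.

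Finally, finiteness of $L$: $L'$ is finite because there are finitely many indices $j$ and each operation $\s{F}^{(j+1)}$ is divided by only finitely many primes (the numerators of the relevant subdeterminants cutting out the inclusion in Definition \ref{Definition-Subspace-Operation} are nonzero integers of $\s{O}_K$ up to units); $L''$ is finite because each $V_{\Lambda,\ell}^{(j)}(K)$ is a fixed finite-dimensional $K$-subspace of $K\dblparens q$ and has good reduction at all but finitely many $\f{p}$ (the primes dividing the denominators/leading minors of a fixed basis); and $L'''$ is finite because it consists of primes dividing the single nonzero algebraic number $\mathrm{B}_{1,\chi}$. Hence $L=L'\cup L''\cup L'''$ is finite and contains every $\f{p}\in S$ at which $\S_1(N,\chi;\s{O}_\f{p})\rar\S_1(N,\chi;\kappa_\f{p})$ fails to be surjective.

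\textbf{Main obstacle.} I expect the delicate point to be pinning down exactly why an Eisenstein congruence is the \emph{only} remaining obstruction in the contrapositive — i.e., verifying that when $\f{p}\notin L'\cup L''$ the equality $\Red_\f{p}V_{\Lambda,\ell}'(K)=V_{\Lambda,\ell}'(\kappa_\f{p})$ combined with $\f{p}\notin L'''$ genuinely forces $\S_1(N,\chi;\kappa_\f{p})\subseteq V_{\Lambda,\ell}'(\kappa_\f{p})$. This requires a careful mod-$\f{p}$ analysis of the maps $[\lambda_i^{-1}]$ and of the $T_\ell$-action on $\M_1^*(N,\chi;\kappa_\f{p})$: one must check that a mod-$\f{p}$ weight-$1$ cusp form $f$ satisfies $f/\lambda_i\in\Red_\f{p}(U_i)$ for every $i$ (which can fail only if $f\equiv 0$ along a component where some $\lambda_i$ vanishes, forcing a congruence of $f$ with an Eisenstein series) and that $f$ lands in the appropriate $T_\ell$-stable piece. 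Everything else — the induction, the identification with $q$-expansions, and the three finiteness statements — is routine once this structural lemma is in place.
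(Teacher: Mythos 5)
Your treatment of $L'$ and $L''$ is essentially the paper's argument run in the contrapositive direction: the paper starts from a failure of surjectivity, notes $\Red_\f{p}V_{\Lambda,\ell}^{(0)}(K)=V_{\Lambda,\ell}^{(0)}(\kappa_\f{p})$, and locates the least index $j$ at which $\Red_\f{p}V_{\Lambda,\ell}^{(j+1)}(K)\subsetneq V_{\Lambda,\ell}^{(j+1)}(\kappa_\f{p})$, concluding $\f{p}\in L'$; your induction showing that $\f{p}\notin L'\cup L''$ forces $\Red_\f{p}V_{\Lambda,\ell}'(K)=V_{\Lambda,\ell}'(\kappa_\f{p})$ is the same mechanism. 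The finiteness discussion is also fine.

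The genuine gap is exactly where you flag it: the role of $L'''$, and your proposed mechanism for it is the wrong one. First, the containment $\S_1(N,\chi;\kappa_\f{p})\subseteq V_{\Lambda,\ell}'(\kappa_\f{p})$ is \emph{unconditional} for $\f{p}\in S$ (recall $S$ already excludes the primes dividing $\f{b}$, so each $\lambda_i$ is nonzero mod $\f{p}$; a mod-$\f{p}$ cusp form $f$ lies in each $\Red_\f{p}(U_i)$ because $f\lambda_i$ lifts to weight $2$, and it survives $T_\ell$-stabilization). So Eisenstein congruences have nothing to do with whether $\S_1(N,\chi;\kappa_\f{p})$ is captured by the operations — the paper asserts this containment with no hypothesis. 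Second, your deduction of surjectivity rests on the identification $V_{\Lambda,\ell}'(K)=\S_1(N,\chi;K)$, which requires $\mathrm{Z}(\Lambda)$ to contain no cusps; the paper calls that condition ``ideal (but not necessary)'' and the proposition does not assume it. In general Theorem \ref{HSThm2} only gives $V_{\Lambda,\ell}'(K)\subseteq\M_1(N,\chi;K)$, so $\Red_\f{p}V_{\Lambda,\ell}'(K)$ may contain reductions of Eisenstein series. This is precisely where $L'''$ enters in the paper: non-surjectivity of $\S_1(N,\chi;\s{O}_\f{p})\rar\S_1(N,\chi;\kappa_\f{p})$ produces an $f\in\S_1(N,\chi;\kappa_\f{p})$ with no lift to a \emph{cusp} form, and excluding Eisenstein congruence primes upgrades this to ``no lift to any element of $\M_1(N,\chi;\s{O}_\f{p})$'' — otherwise $f$ could be the reduction of an Eisenstein series, hence could lie in $\Red_\f{p}V_{\Lambda,\ell}'(K)$ after all, and the chain would never detect the failure. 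With that corrected, the contradiction is immediate: $f\in V_{\Lambda,\ell}'(\kappa_\f{p})$ but $f\notin\Red_\f{p}V_{\Lambda,\ell}'(K)$ since $V_{\Lambda,\ell}'(K)\cap\s{O}_\f{p}\dblparens{q}\subseteq\M_1(N,\chi;\s{O}_\f{p})$, so some operation must be divided by $\f{p}$. You need to replace your ``delicate point'' paragraph with this lifting argument on the characteristic-zero side rather than a purported obstruction on the mod-$\f{p}$ side.
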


\begin{proof}Suppose that $\S_1(N,\chi;\s{O}_{\f{p}})\rar\S_1(N,{\chi};\kappa_\f{p})$ is not surjective and, without loss, that $\f{p}\notin L''\cup L'''$. Because $\f{p}\notin L'''$, there exists $f\in\S_1(N,\chi;\kappa_\f{p})$ that does not lift to any $F\in\M_1(N,\chi;\s{O}_\f{p})$. Since $V_{\Lambda,\ell}^{(s+t)}(K)$ has good reduction at $\f{p}$, Hecke stability guarantees
\begin{gather*}V_{\Lambda,\ell}^{(s+t)}(K)\cap\s{O}_\f{p}\dblparens{q}\subseteq\M_1(N,\chi;\s{O}_\f{p})\text{ by Theorem \ref{HSThm2}.a, and}\\\S_1(N,\chi;\kappa_\f{p})\subseteq V_{\Lambda,\ell}^{(s+t)}(\kappa_\f{p})\cm\end{gather*}
and it follows that $\Red_\f{p}V_{\Lambda,\ell}^{(s+t)}(K)\subsetneq V_{\Lambda,\ell}^{(s+t)}(\kappa_\f{p})$.

On the other hand, $\Red_{\f{p}}V_{\Lambda,\ell}^{(0)}(K)=V_{\Lambda,\ell}^{(0)}(\kappa_\f{p})$ and---since $\f{p}\notin L''$ and each $\s{F}^{(i)}$ is an $S$-integral subspace operation---$\Red_{\f{p}}V_{\Lambda,\ell}^{(i)}(K)\subseteq V_{\Lambda,\ell}^{(i)}(\kappa_\f{p})$ for all $i$. Hence, there exists a least $j$ satisfying
\[\Red_\f{p} V_{\Lambda,\ell}^{(j)}(K)=V_{\Lambda,\ell}^{(j)}(\kappa_\f{p})\quad\text{and}\quad\Red_\f{p} V_{\Lambda,\ell}^{(j+1)}(K)\subsetneq V_{\Lambda,\ell}^{(j+1)}(\kappa_\f{p})\pd\]
By definition, $V_{\Lambda,\ell}^{(j+1)}(K)=\s{F}^{(j+1)}_0 V_{\Lambda,\ell}^{(j)}(K)$ and $V_{\Lambda,\ell}^{(j+1)}(\kappa_\f{p})=\s{F}_\f{p}^{(j+1)}\Red_\f{p} V^{(j)}_{\Lambda,\ell}(K)$. In summary, $V_{\Lambda,\ell}^{(j)}(K)$ and $\s{F}_0^{(j+1)}V_{\Lambda,\ell}^{(j)}(K)$ both have good reduction at $\f{p}$, but the containment 
\[\Red_\f{p}\s{F}_0^{(j+1)}V_{\Lambda,\ell}^{(j)}(K)\subseteq \s{F}_\f{p}^{(j+1)}\Red_\f{p}V_{\Lambda,\ell}^{(j)}(K)\]
is proper. Schematically,
\[\xymatrix{\cdots\ \ar@{|->}[r]&V_{\Lambda,\ell}^{(j)}(K)\ar@{|->}[r]^{\s{F}_0^{(j+1)}}\ar@{|->}[d]^{\Red_\f{p}}& V_{\Lambda,\ell}^{(j+1)}(K)\ar@{|->}[r]\ar@{|->}[d]^{\Red_\f{p}}&\ \cdots\\
&\Red_\f{p}V_{\Lambda,\ell}^{(j)}(K)\ar@{=}[d]& \Red_\f{p}V_{\Lambda,\ell}^{(j+1)}(K)\ar@{_{(}-->}[d]&&\\
\cdots\ \ar@{|->}[r]&V_{\Lambda,\ell}^{(j)}(\kappa_\f{p})\ar@{|->}[r]_{\s{F}_\f{p}^{(j+1)}}& V_{\Lambda,\ell}^{(j+1)}(\kappa_\f{p})\ar@{|->}[r]&\ \cdots}\]
where the broken hooked arrow indicates proper inclusion.

Hence, $\f{p}$ divides $\s{F}^{(j+1)}$ at $V^{(j)}_{\Lambda,\ell}(K)$, so $\f{p}\in L'$.\end{proof}

\subsection{Precision requirements}\label{S3-precision}

%In this section our goal is to determine exactly how large the precision $P$ must be for our computations to be valid.

%To {\it compute} a finite-dimensional $W\subseteq\kappa\dblparens{q}$ is to give an algorithm which on input $P$ (the desired {\it precision}) produces a basis for $W\modulo(q^P)$.

The subspace operations $\{\s{F}^{(i)}\}_{i}$ in the previous section can be described easily in terms of linear algebra on Laurent series. Since $\S_2(N,\boldsymbol{1};\Z[\tfrac{1}{N}])$ and $\Lambda$ can be computed to arbitrarily high precision, for each $\f{p}$, $V_{\Lambda,\ell}'(\kappa_\f{p})$ can be computed to precision $P$ provided that $P$ is large enough. The goal of this section is to determine exactly how large $P$ must be. For simplicity, we confine ourselves to working over the field $K$; all of these results adapt easily to the mod $\f{p}$ setting.

%
%
% algorithm which computes that space to arbitrary precision.

% on input $P$ produces a generating set for the image of $W$ under {\it truncation} $\kappa\dblparens{q}\rar \kappa\dblparens{q}/(q^P)$. In this section, we determine how large $P$ must be for the computations on modular ratios described in x to be valid.

%To compute with modular forms practically, we must work with {\it truncated} Laurent series: spaces of the form $\kappa\dblparens{q}/(q^P)$. The action of the subspace operations 

Recall the {\it Sturm bound}: $\Sturm_k(N)=\frac{k}{12}[\SL_2(\Z):\Gamma_0(N)]+1$. If $P\ge\Sturm_k(N)$, then truncated $q$-expansion of modular {\it forms} $\M_k(N,\theta;\kappa)\rar \kappa\dblbrackets{q}/(q^P)$ is injective for any field $\kappa$ and any character $\theta:(\Z/N\Z)^\times\rar \kappa^\times$.

\begin{lemma}Let $i,j\in\{0,\ldots,s\}$. If $P\ge\Sturm_3(N)$ and $U_i$ and $U_j$ (as defined above) can be computed to precision $P$, then $U_i\cap U_j$ can be computed to precision $P$.\end{lemma}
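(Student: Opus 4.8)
The plan is to exhibit the intersection algorithmically and then certify correctness via the Sturm bound. Assume $P\ge\Sturm_3(N)$. Using the hypothesis, compute bases for $U_i\modulo(q^P)$ and $U_j\modulo(q^P)$ inside $K\dblparens{q}/(q^P)$, and let the algorithm output a basis for the intersection of these two finite-dimensional subspaces, obtained by ordinary linear algebra. The claim is that this intersection equals the image of $U_i\cap U_j$ in $K\dblparens{q}/(q^P)$, so that the output is exactly a basis for $U_i\cap U_j$ computed to precision $P$. This reduces the lemma to the identity
\[(U_i\modulo(q^P))\cap(U_j\modulo(q^P))=(U_i\cap U_j)\modulo(q^P)\pd\]

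The inclusion $\supseteq$ is immediate. For $\subseteq$, let $\bar v$ lie in the left-hand side and lift it to $f_i\in U_i$ and $f_j\in U_j$ with $f_i\equiv f_j\equiv\bar v$, so that $f_i-f_j\in(q^P)$. Write $f_i=g/\lambda_i$ and $f_j=h/\lambda_j$ with $g,h\in\S_2(N,\boldsymbol{1};K)$. The key step is to clear denominators: since $\lambda_i,\lambda_j$ are holomorphic of weight one, the form $g\lambda_j-h\lambda_i$ is a genuine weight-three modular form in $\M_3(N,\chi^{-1};K)$, and its $q$-expansion equals $(\lambda_i\lambda_j)\cdot(f_i-f_j)$. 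Because the $q$-expansion of $\lambda_i\lambda_j$ lies in $K\dblbrackets{q}$ (with the fixed choice of cusp; see Section \ref{S2-ratios}) and $f_i-f_j\in(q^P)=q^PK\dblbrackets{q}$, we get $g\lambda_j-h\lambda_i\in(q^P)$. As $P\ge\Sturm_3(N)$, truncation at precision $P$ is injective on $\M_3(N,\chi^{-1};K)$, so $g\lambda_j=h\lambda_i$, i.e.\ $f_i=f_j$. Hence $f_i\in U_i\cap U_j$ and it reduces to $\bar v$, proving $\subseteq$.

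The only genuinely substantive point — and the reason the hypothesis is $P\ge\Sturm_3(N)$ rather than a bound attached to weight one — is precisely this collapse of the truncated intersection onto the honest intersection: truncated $q$-expansions of two \emph{ratios} of weight one can agree to arbitrarily high order without the ratios being equal, so no Sturm bound for $\M_1$ is available. Clearing denominators trades the weight-one ratio $f_i-f_j$ for the holomorphic weight-three form $g\lambda_j-h\lambda_i$, which \emph{is} controlled by $\Sturm_3(N)$; the remaining verifications (that $\lambda_i\lambda_j$ contributes no polar part to the $q$-expansion, and that the linear-algebra step returns a basis for the image of $U_i\cap U_j$ modulo $(q^P)$) are routine.
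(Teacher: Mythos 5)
Your proof is correct and follows essentially the same route as the paper's: the paper likewise reduces the claim to injectivity of truncated $q$-expansion on $U_i+U_j$, clears denominators by multiplying by $\lambda_i\lambda_j$ to land in $\M_3(N,\chi^{-1};K)$, and invokes the Sturm bound $\Sturm_3(N)$. No gaps.
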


%\begin{proof}It is clear that we can compute the intersection of $U_i\modulo q^P$ and $U_j\modulo q^P$, but we must verify that this is equal to $(U_i\cap U_j)\modulo q^P$. If $f\in (U_i\modulo q^P)\cap (U_j\modulo q^P)$, then
%
%\end{proof}

\begin{proof}It is clear that we can compute the intersection of $U_i\modulo(q^P)$ with $U_j\modulo(q^P)$ (see Section \ref{S3-subroutines}), but we must verify that this is equal to $(U_i\cap U_j)\modulo(q^P)$. It suffices to show that $U_i+U_j\rar K\dblparens{q}/(q^P)$ is injective.

Suppose that $f,g\in U_i+U_j$ and that $f\equiv g\modulo(q^P)$. Then $\lambda_i\lambda_jf$ and $\lambda_i\lambda_jg$ are elements of $\M_3(N,\chi^{-1};K)$ and $\lambda_i\lambda_jf\equiv \lambda_i\lambda_jg\modulo(q^P)$. Since $P\ge\Sturm_3(N)$, this congruence implies $\lambda_i\lambda_jf=\lambda_i\lambda_jg$, so $f=g$.\end{proof}

\begin{lemma}If $P\ge\Sturm_{\ell+2}(N)$ and a given subspace $W\subseteq V_{\Lambda}'(K)$ can be computed to precision $\ell P$, then $\s{S}^{T_\ell}_0(W)$ can be computed to precision $\ell P$.\end{lemma}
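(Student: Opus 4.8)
The plan is to reduce the claim to a Sturm-type injectivity bound for $q$-expansions and then argue exactly as in the previous lemma. Recall from Section~\ref{S2-ratios} that $T_\ell$ acts on $q$-expansions by
\[
g(q)\ \longmapsto\ \sum_{n}a_{\ell n}(g)\,q^{n}\ +\ \chi(\ell)\sum_{n}a_{n}(g)\,q^{\ell n}\cm
\]
so the expansion of $T_\ell g$ modulo $q^{P}$ is determined by that of $g$ modulo $q^{\ell P}$. Hence from $W$ computed to precision $\ell P$ one can compute $\{T_\ell g:g\in W\}$ to precision $P$, and then, by linear algebra in $K\dblparens{q}/(q^{P})$, the subspace $\s{S}'=\set{g\in W}{(T_\ell g)\bmod q^{P}\in W\bmod q^{P}}$. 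Since $\s{S}'\subseteq W$ and $W$ is known to precision $\ell P$, a basis for $\s{S}'$ can be output to precision $\ell P$. Clearly $\s{S}^{T_\ell}_{0}(W)\subseteq\s{S}'$, so it remains only to prove the reverse inclusion under the hypothesis $P\ge\Sturm_{\ell+2}(N)$.

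For the reverse inclusion it suffices to show that the truncated $q$-expansion map
\[
T_\ell W+W\ \longrightarrow\ K\dblparens{q}/(q^{P})
\]
is injective: if $g\in\s{S}'$ and $T_\ell g\equiv w\pmod{q^{P}}$ with $w\in W$, then $T_\ell g-w$ lies in $T_\ell W+W$ and has $q$-valuation $\ge P$, hence vanishes, so $T_\ell g=w\in W$. The plan for this injectivity is to mimic the weight-$3$ estimate above: exhibit one nonzero $\nu\in\M_{\ell+1}(N;K)$ (of a suitable character) with $\nu\cdot(T_\ell W+W)\subseteq\M_{\ell+2}(N;K)$, a genuine space of modular forms. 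Then for $x\in T_\ell W+W$ with $q$-valuation $\ge P$ one gets $\nu x\in\M_{\ell+2}(N;K)$ with $q$-valuation $\ge P\ge\Sturm_{\ell+2}(N)$, whence $\nu x=0$ by the Sturm bound and $x=0$ since $K\dblparens{q}$ is a domain. (When $\Lambda$ consists of Eisenstein series $\nu$ can be taken with vanishing $q$-valuation at our cusp; a positive valuation is harmless, and cusp poles of elements of $T_\ell W+W$ are excluded as in Lemma~\ref{Lemma-No-Cusps}.)

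The construction of $\nu$ is where the real work is. Fix $\lambda_{0}\in\Lambda$. Every $g\in W\subseteq V_\Lambda'(K)$ has poles only on $\mathrm{Z}(\Lambda)\subseteq\mathrm{Z}(\lambda_{0})$, with $\ord_{\tau}(g)\ge-\ord_{\tau}(\lambda_{0})$ at each such $\tau$ by the definition of $V_\Lambda'(K)$; moreover, by the isogeny-graph description of $T_\ell$ (Section~\ref{S2-graphs}), $(T_\ell g)(\tau)=\infty$ forces $\tau$ to be $\ell$-isogenous to a pole of $g$, with the pole order of $T_\ell g$ at $\tau$ controlled by those of $g$ (and the ramification of the correspondence at elliptic points). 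Thus $\nu$ must vanish, with sufficient multiplicity, both on $\mathrm{Z}(\lambda_{0})$ and on the finitely many $\Gamma_{1}(N)$-structures that are $\ell$-isogenous to it. The natural candidate is the \emph{norm of $\lambda_{0}$ along the degree-$\ell$ Hecke correspondence}, the weight-$(\ell+1)$ modular form whose value on a test object $(E,P,\omega)$ is $\prod_{|H|=\ell}\lambda_{0}(E/H,\phi_{H}(P),\phi_{H*}\omega)$; this is a genuine $\Gamma_{1}(N)$-form whose divisor is the $\ell$-isogeny pull-back of $\div(\lambda_{0})$, tailored to kill precisely the poles that $T_\ell$ can introduce.

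The step I expect to be the main obstacle is the divisor bookkeeping: checking the character, pinning down the exact pole divisor of each $T_\ell g$ from the isogeny-graph picture, and running a Riemann--Roch-type count on $X_{1}(N)$ that shows a single weight-$(\ell+1)$ form can simultaneously dominate those pole orders and the pole orders of the $w\in W$ (which sit on $\mathrm{Z}(\lambda_{0})$), so that $\nu\cdot(T_\ell W+W)\subseteq\M_{\ell+2}(N;K)$ with no residual poles. Once that is settled, the Sturm bound for weight $\ell+2$ closes the argument, and the precision claim follows from the reduction in the first paragraph; the remaining ingredients---evaluating $T_\ell$ on truncated expansions and the linear algebra producing $\s{S}'$---are routine.
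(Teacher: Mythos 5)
Your proposal follows the paper's proof essentially verbatim: the paper likewise reduces the claim to injectivity of the truncated $q$-expansion map on $W+T_\ell W$, multiplies by exactly the form you call the norm of $\lambda$ along the degree-$\ell$ Hecke correspondence --- which it denotes $Q_\ell\lambda$, the $\ell$th multiplicative Hecke operator applied to $\lambda\in\Lambda$ --- to land in $\M_{\ell+2}(N,\chi^{-\ell};K)$, and then invokes the Sturm bound $P\ge\Sturm_{\ell+2}(N)$. The ``divisor bookkeeping'' you flag as the main obstacle is dispatched by the paper in a single asserted sentence ($Q_\ell\lambda\cdot f\in\M_{\ell+2}(N,\chi^{-\ell};K)$ for all $f\in W+T_\ell W$), so your candidate $\nu$ is the intended multiplier and your argument matches the published one.
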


\begin{proof}Given a basis for $W\modulo(q^{\ell P})$ we can compute the image of this basis under $T_\ell=T_\ell^\chi$ to precision $P$ (see \ref{S3-subroutines}). Therefore, we can compute
\[\set{f\in W\modulo(q^{\ell P})}{T_\ell f\in W\modulo(q^P)}\]
to precision $q^{\ell P}$ using linear algebra on formal Laurent series. To show that the space above is equal to $\s{S}_0^{T_\ell}(W)\modulo(q^{\ell P})$ it suffices to prove that the truncated $q$-expansion map $W+T_\ell fW\rar K\dblparens{q}/(q^P)$ is injective.

Let $\lambda\in\Lambda$ and let $Q_\ell$ denote the $\ell$th multiplicative Hecke operator. If $f\in W+T_\ell W$, then $Q_\ell\lambda\cdot f\in\M_{\ell+2}(N,\chi^{-\ell};K)$. Therefore, if $f,g\in W+T_\ell W$ and $f\equiv g\modulo(q^P)$, the congruence $Q_\ell\lambda\cdot f\equiv Q_\ell\lambda\cdot g\modulo(q^P)$ implies $Q_\ell\lambda\cdot f=Q_\ell\lambda\cdot g$ since $P\ge\Sturm_{\ell+2}(N)$, whence $f=g$.\end{proof}

By induction, we obtain the following:

\begin{lemma}\label{Lemma-Precision}If $U_0,\ldots,U_s$ can all be computed to precision $\ell P$ where $P\ge\Sturm_{\ell+2}(N)$, then $V_{\Lambda,\ell}'(K)$ can also be computed to precision $\ell P$.\end{lemma}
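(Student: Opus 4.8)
The plan is to obtain the final statement, Lemma~\ref{Lemma-Precision}, by a straightforward induction on the index $i$ through the composition of subspace operations $\s{F}^{(1)},\ldots,\s{F}^{(s+t)}$ that defines $V_{\Lambda,\ell}'(K)$, combining the two preceding lemmas as the induction step. The induction hypothesis at stage $i$ is: ``if $U_0,\ldots,U_s$ can be computed to precision $\ell P$ (with $P\ge\Sturm_{\ell+2}(N)$), then $V_{\Lambda,\ell}^{(i)}(K)$ can be computed to precision $\ell P$.'' The base case $i=0$ is immediate since $V_{\Lambda,\ell}^{(0)}(K)=U_0$ by definition.

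For the induction step I would split into the two kinds of operations. When $1\le i\le s$, the operation $\s{F}^{(i)}=\s{I}^{U_i}$ is an intersection $W\mapsto W\cap U_i$; here I would apply the first of the two preceding lemmas (intersection can be computed to precision $P$ once $P\ge\Sturm_3(N)$), noting that $\Sturm_3(N)\le\Sturm_{\ell+2}(N)$ since $\ell\ge 2$, so that the ambient precision $\ell P$ certainly suffices. One subtlety: that lemma as stated asserts $U_i\cap U_j$ can be computed to precision $P$, but by monotonicity of the relevant truncated $q$-expansion injectivity one gets the same conclusion at precision $\ell P$, and more generally intersecting with $U_i$ any subspace $W\subseteq V'_\Lambda(K)$ that is already known to precision $\ell P$ is computable to precision $\ell P$; I would remark that the proof of that lemma goes through verbatim replacing $P$ by $\ell P$ and $U_j$ by $W$, since the only input was that $\lambda_i\lambda_j f\in\M_3(N,\chi^{-1};K)$ and the Sturm bound for weight $3$. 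When $i>s$, $\s{F}^{(i)}=\s{S}^{T_\ell^\chi}$ is a $T_\ell$-stabilization, and I would invoke the second preceding lemma directly: given $W=V_{\Lambda,\ell}^{(i-1)}(K)\subseteq V'_\Lambda(K)$ known to precision $\ell P$, the space $\s{S}_0^{T_\ell}(W)$ is computable to precision $\ell P$. Chaining these two cases through all $s+t$ stages yields that $V_{\Lambda,\ell}'(K)=V_{\Lambda,\ell}^{(s+t)}(K)$ is computable to precision $\ell P$, which is exactly the claim.

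I do not expect any genuine obstacle here; the content has already been extracted into the two lemmas just above, and the remaining work is purely the bookkeeping of threading them through the finite composition. The only point requiring a sentence of care is making sure the precision parameter is consistent: the intersection lemma is phrased at precision $P$ and the stabilization lemma at precision $\ell P$, so I would fix the working precision at $\ell P$ throughout and observe that $\ell P\ge P\ge\Sturm_3(N)$ guarantees the intersection steps are valid, while $P\ge\Sturm_{\ell+2}(N)$ is exactly what the stabilization steps need. A secondary bookkeeping point is that each intermediate space $V_{\Lambda,\ell}^{(i)}(K)$ is a subspace of $V'_\Lambda(K)$ once $i\ge s$ (and is built from $U_0$ for $i<s$), so the hypotheses ``$W\subseteq V'_\Lambda(K)$'' in the second lemma are met; I would note this inclusion explicitly. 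With those remarks in place the induction closes and the lemma follows.

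\begin{proof}We prove by induction on $i\in\{0,1,\ldots,s+t\}$ that if $U_0,\ldots,U_s$ can be computed to precision $\ell P$ with $P\ge\Sturm_{\ell+2}(N)$, then $V_{\Lambda,\ell}^{(i)}(K)$ can be computed to precision $\ell P$; the case $i=s+t$ is the assertion of the lemma since $V_{\Lambda,\ell}'(K)=V_{\Lambda,\ell}^{(s+t)}(K)$.

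The base case $i=0$ holds because $V_{\Lambda,\ell}^{(0)}(K)=U_0$. For the inductive step, suppose $V_{\Lambda,\ell}^{(i-1)}(K)$ has been computed to precision $\ell P$; we treat the two kinds of operations separately.

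If $1\le i\le s$, then $V_{\Lambda,\ell}^{(i)}(K)=V_{\Lambda,\ell}^{(i-1)}(K)\cap U_i$. Since $\ell\ge 2$ we have $\ell P\ge P\ge\Sturm_{\ell+2}(N)\ge\Sturm_3(N)$. The proof of the first of the two preceding lemmas applies with $P$ replaced by $\ell P$ and with $U_j$ replaced by $V_{\Lambda,\ell}^{(i-1)}(K)$: the only input was that, for $f,g$ in the relevant sum of spaces, $\lambda_i\lambda_j f$ and $\lambda_i\lambda_j g$ lie in $\M_3(N,\chi^{-1};K)$, on which truncation modulo $q^{\ell P}$ is injective as $\ell P\ge\Sturm_3(N)$. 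Hence $V_{\Lambda,\ell}^{(i-1)}(K)\cap U_i$ can be computed to precision $\ell P$.

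If $i>s$, then $V_{\Lambda,\ell}^{(i)}(K)=\s{S}_0^{T_\ell^\chi}\bigl(V_{\Lambda,\ell}^{(i-1)}(K)\bigr)$, and $V_{\Lambda,\ell}^{(i-1)}(K)\subseteq V_\Lambda'(K)$. Applying the second of the two preceding lemmas with $W=V_{\Lambda,\ell}^{(i-1)}(K)$ (which by hypothesis is computed to precision $\ell P$ and satisfies $P\ge\Sturm_{\ell+2}(N)$), we conclude that $\s{S}_0^{T_\ell^\chi}(W)$ can be computed to precision $\ell P$.

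In either case $V_{\Lambda,\ell}^{(i)}(K)$ is computable to precision $\ell P$, completing the induction.\end{proof}
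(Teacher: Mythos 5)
Your proposal is correct and is precisely the induction the paper intends (the paper merely writes ``By induction, we obtain the following'' after the two preceding lemmas). You have also correctly identified and handled the one point the paper leaves implicit, namely that the intersection lemma must be applied with a general subspace $W\subseteq U_0$ in place of $U_j$, which works because $\lambda_0\lambda_i f\in\M_3(N,\chi^{-1};K)$ for $f\in W+U_i$ and $\ell P\ge\Sturm_3(N)$.
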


%Combining the work of this section we obtain
%
%\begin{theorem}Fix $(N,\chi,\Lambda,\ell)$ and define $S$ as above. There is an algorithm which, on input $(\f{p},P)\in S\times\Z$, outputs the image of a basis of $V_{\Lambda,\ell}'(\kappa_\f{p})$ to precision $P$.
%
%The space $V_{\Lambda,\ell}'(\kappa_\f{p})$ is finite-dimensional, $T_\ell$-stable, and it satisfies
%\[\S_1(N,\chi;\kappa_\f{p})\subseteq V_{\Lambda,\ell}'(\kappa_\f{p})\subseteq\M_1^*(N,\chi;\kappa_\f{p})\pd\]
%Furthermore, if one of the conditions of Theorem \ref{HSThm2} are met, we have $V_{\Lambda,\ell}'(\kappa_\f{p})\subseteq\M_1(N,\chi;\kappa_\f{p})$.\end{theorem}

\subsection{Constituent computations of the Hecke stability method}\label{S3-subroutines}

Let us briefly explain how the constituent computations of the HSM above are performed using linear algebra.

If $\boldsymbol{A}$ is a matrix with entries in a subring $\s{O}\subseteq K$ we say that a prime $\f{p}\subseteq\s{O}$ is a {\it prime divisor} of $\boldsymbol{A}$ if the nullity of $\boldsymbol{A}$ (over the field $K$) increases upon reduction of the matrix modulo $\f{p}$. If $\f{p}$ is a prime divisor of $\boldsymbol{A}$, then $\f{p}$ divides the determinant of any nonsingular minor of $\boldsymbol{A}$. Therefore, in practice, to compute (a list of candidates for) the prime divisors of $\boldsymbol{A}$, we find two nonsingular minors $\boldsymbol{A}_1$ and $\boldsymbol{A}_2$, and then we factor the ideal $\gcd(\det\boldsymbol{A}_1,\det\boldsymbol{A}_2)$ of $\s{O}$.

Suppose that $W$ is a finite-dimensional subspace of $K\dblparens{q}$ where $K$ is a global field. Because there is $d$ large enough so that $q^dW\subseteq K\dblbrackets{q}$, we will assume for simplicity that $W\subseteq K\dblbrackets{q}$. Fix a basis $\{g_1,\ldots,g_r\}$ for $W$.

The table below summarizes the linear-algebraic computations performed by the Hecke stability method with input $W$ (represented by the chosen basis to an appropriate level of precision). In the table, $U$ is another finite-dimensional subspace of $K\dblbrackets{q}$ with basis $\{h_1,\ldots,h_u\}$, and $T$ is a $K$-linear operator on $K\dblbrackets{q}$. The subspace operations $\s{I}^U$ and $\s{S}^T$ were defined in Section \ref{S3-subspace-ops}.

{\small \begin{center}\begin{tabular}{|llll|}\hline target & matrix dim's & $ij$th entry of matrix & computed from matrix $\boldsymbol{A}$ by\\\hline
primes of bad&&&\\
reduction for $W$ & $r\times P$ & $j$th coefficient of $g_i$ & $\subseteq$ prime divisors of $\boldsymbol{A}$\\\hline
 & & $j$th coeff.\ of $i$th entry of & isomorphism $\ker(\boldsymbol{A})\rar\text{target}$\\
$\s{I}_\f{p}^U(W)$&$(r+u)\times P$& $(g_1,\ldots,g_r,h_1,\ldots,h_u)$&$(v_1,\ldots,v_{r+u})\mapsto\sum_{i=1}^rv_ig_i$\\\hline

divisors of & & $j$th coeff.\ of $i$th entry of &\\
$\s{I}^U$ at $W$&$(r+u)\times P$& $(g_1,\ldots,g_r,h_1,\ldots,h_u)$&$\subseteq$ prime divisors of $\boldsymbol{A}$\\\hline

%divisors of & & &\\
%$\s{I}^U$ at $W$&&& $\subseteq$ prime divisors\\\hline
 &  & $j$th coeff.\ of $i$th entry of & isomorphism $\ker(\boldsymbol{A})\rar\text{target}$\\
$\s{S}_\f{p}^T(W)$ & $2r\times P$& $(g_1,\ldots,g_r,Tg_1,\ldots,Tg_r)$&$(v_1,\ldots,v_{2r})\mapsto\sum_{i=1}^{r}v_{i+r}g_i$\\\hline

divisors of & & $j$th coeff.\ of $i$th entry of &\\
$\s{S}^T$ at $W$&$2r\times P$& $(g_1,\ldots,g_r,Tg_1,\ldots,Tg_r)$&$\subseteq$ prime divisors of $\boldsymbol{A}$\\\hline

%divisors of  & & & $\subseteq$ prime divisors of matrix\\
%$\s{S}^T$ at $W$&&& above, taking $\f{p}=0$\\\hline
\end{tabular}\end{center}}

In the first row of the table above, $P$ is taken large enough so that the matrix has rank $r$. Elsewhere, $P$ is taken to be large enough so that the given map from the kernel to the target is an isomorphism when $\f{p}=0$.

\subsection{Certification of Hecke stability hypotheses, examples and remarks}\label{S3-certification}

Given input $(N,\chi,\Lambda,\ell)$ to the Hecke stability method as outlined above, we have for each $\f{p}\in S$ a {\it Hecke stability hypothesis}: the proposition ``$V'(\kappa_\f{p})\subseteq\M_1(N,\chi;\kappa_\f{p})$.'' The truth of the Hecke stability hypothesis at $\f{p}=0$ is guaranteed by (i.) of Theorem \ref{HSThm2}, but for nonzero $\f{p}$ some work must be done to certify such a claim.

Here are four methods for certifying a Hecke stability hypothesis:

\begin{itemize}\item[a.] If the space $V'(\kappa_\f{p})$ is equal to $\Red_\f{p} V_{\Lambda,\ell}'(K)$, then reduction $\S_1(N,\chi;\s{O}_\f{p})\rar\S_1(N,\chi;\kappa_\f{p})$ is surjective and the inclusion $V'(\kappa_\f{p})\subseteq\M_1(N,\chi;\kappa_\f{p})$ holds automatically.

\item[b.] If one can prove that condition (ii.) or condition (iii.) of Theorem \ref{HSThm2} holds with $F=\kappa_\f{p}$, then $V'(\kappa_\f{p})=V'_{\Lambda,\ell}(\kappa_\f{p})\subseteq\M_1(N,\chi;\kappa_\f{p})$.

\item[c.] If one has detailed knowledge of $\mathrm{Z}(\Lambda)$ in advance, more specific arguments using polar conditions on isogeny graphs can be used to prove Hecke stability hypotheses (see Example \ref{Elliptic-Improvements} and Remark \ref{CM-Improvements} below).

\item[d.] We have $V'(\kappa_\f{p})\subseteq\M_1(N,\chi;\kappa_\f{p})$ if for some $k\ge 2$ and every $f\in V'(\kappa_\f{p})$ we have $f^k\in\M_k(N,\chi^k;\kappa_\f{p})$. Because this containment condition is ``non-linear'' it can be used to certify Hecke stability hypotheses (by checking the condition on a basis for $V'(\kappa_\f{p})$) but it cannot be used directly to compute $\M_1(N,\chi;\kappa_\f{p})$. This is an especially convenient certification method when $\chi^2=\boldsymbol{1}$, since the HSM requires that we compute a basis for $\S_2(N,\boldsymbol{1};\kappa_\f{p})$.

\end{itemize}

\begin{remark}\label{Zeros-Remark}For the second and third methods above, it is useful to have some method of computing the zeros of a modular form $\lambda\in\M_1(N,\chi^{-1};\bar{\kappa})$ with an aim towards counting its supersingular zeros. There are several ways to do this, and we outline just one below.

The principal challenge is computing the polynomial $J_\lambda(X)=\prod_{\lambda(\tau)=0}(X-j(\tau))$. Suppose that we know the $q$-expansion of the Atkin--Lehner twist $\lambda^{w}$ of $\lambda$ (if $\lambda$ is an Eisenstein series this is easy); $\lambda\lambda^w$ is a weight $2$ modular form for $\Gamma_0(N)$. Consider the modular ratio
\[H_\lambda(X)=\frac{(12G_4)^3-(G_4^3-G_6^2)X}{\lambda\lambda^w}\]
of weight $10$ for $\Gamma_0(N)$ over the polynomial ring $\kappa[X]$ (here $G_k$ is the normalized weight $k$ Eisenstein series for $\SL_2(\Z)$). Note that for almost all values of ${\alpha}\in \bar{\kappa}$, the negative part of $\div H_\lambda(\alpha)$ is $-\div(\lambda\lambda^w)$. However, if $(\lambda\lambda^w)(\tau)=0$, then the numerator of $H_\lambda(X)$ vanishes at $X=j(\tau)$ so the negative part of $\div H_\lambda(j(\tau))$ is at least $-\div(\lambda\lambda^w)+[\tau]$.

Now, for even $k$ let $M_k=\M_k(N,\boldsymbol{1};\kappa)$. If $k\ge 4$, then for any $\tau'$ we have
\[\dim\set{g\in M_k}{\div g\ge \div(\lambda\lambda^w)-[\tau']}>\dim\set{g\in M_k}{\div g\ge \div(\lambda\lambda^w)}\pd\]
The right hand side is the generic dimension of $H_\lambda(\alpha)M_k\cap M_{k+10}$ as $\alpha$ ranges over $\bar{\kappa}$, while the left hand side is a lower bound on the dimension of $H_\lambda(j(\tau))M_k\cap M_{k+10}$ when $\tau\in\mathrm{Z}(\{\lambda,\lambda^w\})$. In the language of \ref{S3-detection}, determining $J_\lambda(X)J_{\lambda^w}(X)$ reduces to finding the prime ideals of $\kappa[X]$ that divide the intersection operator $\s{I}^{H_\lambda(X)M_k}$ at $M_{k+10}$.

If $\kappa$ is a finite field, we can find these divisors by taking determinants of a matrix with entries in $\kappa[X]$ using polynomial interpolation. When $\kappa$ is a number field, one can perform interpolation over several residue fields and then reconstruct $J_\lambda(X)J_{\lambda^w}(X)$ using the Chinese remainder theorem.

\end{remark}

%To better illustrate how Hecke stability hypotheses cn be certified we will give two examples. We give a third example to 

\begin{example}\label{Elliptic-Improvements}Let $N\ge 1$ and let $\chi:(\Z/N\Z)^\times\rar\Qbar^\times$ be a character of conductor $M$ where $gX_0(M)=0$. In this case, by taking oldforms of type $(M,\chi^{-1})$, we may choose $\Lambda\subseteq\M_1(N,\chi^{-1};K)$ such that $\mathrm{Z}(\Lambda)$ contains {\it only elliptic points}. To prove that $V_{\Lambda,\ell}'(\kappa_\f{p})\subseteq\M_1(N,\chi;\kappa_\f{p})$ for a fixed nonzero prime $\f{p}$, it suffices to prove that there is no polar condition $P$ on $\s{G}_\ell(\SL_2(\Z);\kappa_\f{p})$ satisfying $P\subseteq {W}_1$.

For concreteness, suppose that $j\mathrm{Z}(\lambda)=\{0\}$ (i.e., $M=3$ or $M=7$) and that $\ell=2$ (so since $\ell\nmid N$, $N$ must be odd). We have $\Phi_2(0,Y)=(Y-54000)^3$ and
\[\Phi_2(54000,Y)=Y(Y^2-2835810000Y+6549518250000)\]
where $\Phi_2(X,Y)$ is the second modular polynomial [ref]. When $p\nmid 6549518250000=2^4 \cdot 3^9 \cdot 5^6 \cdot 11^3$, there is a {\it unique} $2$-isogeny from the elliptic curve $E'$ with $j$-invariant $54000$ (mod $p$) to the elliptic curve $E$ with $j$-invariant $0$ (mod $p$). For such $p$, any polar condition on $\s{G}_2(\Gamma(1);\kappa_\f{p})$ that contains the vertex $[E]$ must also contain the vertex $[E']$ (which is distinct from $[E]$). Since $\delta_{N,1}\mathrm{Z}(\Lambda)=\{[E]\}$, it follows that $V_{\Lambda,2}'(\kappa_\f{p})\subseteq\M_1(N,\chi;\kappa_\f{p})$ as long as $\f{p}\cap\Z\notin\{2,3,5,11\}$.

In fact, we have proven that $\set{f\in V_{\Lambda}'(\kappa)}{T_2f\in V_{\Lambda}'(\kappa)}\subseteq \M_1(N,\chi;\kappa)$ in this situation.

\end{example}

\begin{example}Let $(N,\chi)=(651,\varepsilon)$ where $\varepsilon$ is the quadratic character of level $651$ and conductor $31$. Applying the Hecke stability method in characteristic zero with $\Lambda=\{\lambda\}$ where
\[\lambda(q)=3+2\sum_{n=1}^\infty\sum_{d\mid n}\left(\frac{d}{31}\right)q^n\]
proves that $\S_1(651,\varepsilon;\Q)$ is trivial. However, the methods of Section \ref{S3-detection} indicate that mod $337$ reduction $\S_1(651,\varepsilon;\Z[\tfrac{1}{651}])\rar\S_1(651,\varepsilon;\F_{337})$
may not be surjective. Indeed, $V_{\Lambda,2}'(\F_{337})$ is $2$-dimensional. We can verify the Hecke stability hypothesis $V_{\Lambda,2}'(\F_{337})\subseteq\M_1(N,{\varepsilon};\F_{337})$ in three ways:

\begin{itemize}\item Using the method of Remark \ref{Zeros-Remark}, we find that $\lambda(\tau)=0$ only if $j(\tau)$ is a root of
\[3^{32}X^3+394086965048982896640X^2+23574729187315780314726400X\cm\]
which factors as $2X(X-96)(X-241)$ when reduced modulo $337$. The $j$-invariants $0$, $96$, and $241$ are all ordinary over $\F_{337}$, so the Hecke stability hypothesis holds by condition (ii.)\ of Theorem \ref{HSThm2}.

\item Applying (iii.)\ of Theorem \ref{HSThm2} with $M=31$, $\ell=2$, and $r=2$. Here, the inequality $337>\max\{\frac{1024}{31},64\}=64$ guarantees that any polar condition on $\s{G}_2(\Gamma_0(31);\F_{337})$ contains at least $3$ nonelliptic vertices, but $gX_0(31)=2$, so $\lambda$ has at most $2$ nonelliptic zeros on $X_0(31)$. Therefore, $\mathrm{Z}(\Lambda)$ cannot contain a polar condition, so elements of $V'_{\Lambda,2}(\F_{337})$ must be modular forms.

\item Let $\{f_1,f_2\}$ be a basis for $V_{\Lambda,2}(\F_{337})$. To check the Hecke stability hypothesis, it is enough to verify that $f_1^2,f_2^2\in\M_2(651,\boldsymbol{1};\F_{337})$, and this is the case.\end{itemize}

Using any of these methods, we find that $\dim \S_1(651,\varepsilon;\F_{337})=2$. The discrepancy in dimensions between this space over $\Q$ and $\F_{337}$ indicates the existence of nontrivial $337$-torsion in the cohomology $\H^1(X_1(651),\underline{\omega}(-\mathrm{cusps}))$ [ref].\end{example}

\begin{remark}\label{CM-Improvements}Though we will not go into the details, some improvements to the bounds in (iii.)\ of Theorem \ref{HSThm2} can be formulated using the Goren--Lauter lemma when $\delta_{N,M}\mathrm{Z}(\Lambda)$ is known to consist of CM points on $X_0(M)$.\end{remark}

\begin{remark}The author has so far encountered only one family of {\it false} Hecke stability hypotheses: If $\varepsilon$ is the quadratic character of conductor $11$ and $\lambda$ is the unique normalized weight $1$ Eisenstein series of type $(11,\varepsilon)$, then $\S_1(11,\varepsilon;\F_7)$ is trivial, but
\[\dim_{\F_7}V_{\{\lambda\},\ell}'(\F_7)=1\]
for $\ell\in\{5,59,\ldots\}$. Note that in this case, the zeros of $\lambda$ (which lie over $j=-32^3$, the elliptic curve $\C/\Z[\frac{1+\sqrt{-11}}{2}]$) are supersingular, and the inequalities of (iii.)\ in Theorem \ref{HSThm2} fail to obtain.\end{remark}

\begin{proof}[Note] This section concludes the proof of Theorem \ref{DetectionThm}.\end{proof}

%
%
%The space $\S_2(11,\boldsymbol{1};\F_7)$ is $1$-dimensional, generated by the mod $7$ reduction $g$ of the cusp form attached to the elliptic curve $y^2+y=x^3-x^2-10x-20$. Let $\varepsilon$ be the mod $11$ quadratic character. $\S_1(11,\varepsilon;\F_7)$ is trivial, but if one takes $\lambda$ to be the unique normalized Eisenstein series of type $(11,\varepsilon)$, then $g/\lambda$ is a $T_5$-eigenvector: $T_5(g/\lambda)=5g/\lambda$ (in $\F_7$).
%
%Here is an explanation: Even though $g/\lambda$ is not a modular form, all of its poles are supersingular (by Theorem \ref{HSThm1}.b), and $A\cdot g/\lambda$ is an element of $\S_7(11,\varepsilon;\F_7)$ where $A$ is the Hasse invariant. The space $\S_7(11,\varepsilon;\F_7)$ decomposes as a direct sum $M_1\oplus M_2$ of simple Hecke modules, and $A\cdot g/\lambda\in M_1$ where $M_1$ is $4$-dimensional; $A\cdot g/\lambda$ can be obtained as the {\it trace} of (a scalar multiple of) a newform $f\in M_1$ (whose $q$-expansion is in $\F_{7^4}$). The primes $\ell$ such that $A\cdot g/\lambda$ is a $T_\ell$-eigenform are those primes $\ell$ for which $a_\ell(f)\in M_1$; $\ell=5$ is the smallest such prime, and the next is $\ell=59$.\end{example}

\subsection{An informal discussion of complexity}\label{S3-complexity}

We refrain from a detailed account of the complexity of the Hecke stability method, since its complexity depends on the efficiency of other algorithms already in place.

%Though many of the operations involved are ``just linear algebra'' over a number field (determined by the character $\chi$), some of the other computations are more nontrivial from an algorithmic standpoint.

\begin{itemize}\item The HSM requires that we compute a basis for $\S_2(N,\boldsymbol{1};\Z[\frac{1}{N}])$ to high precision: For fixed $\ell$ (which can be assumed to be the smallest prime not dividing $N$) Lemma \ref{Lemma-Precision} requires that we compute a basis for the weight $2$ cusp forms to precision $\ell\Sturm_{\ell+2}(N)=\mathrm{O}(\ell^2 N)$.

Fixing $\ell$, the complexity of computing this space using modular symbols is in $\mathrm{O}(N^{4+\epsilon})$, though a precise reference for this is difficult to find. If $N$ is squarefree, there are GRH-conditional results due to Bruin that reduce the computational complexity to polynomial in $\log N$ \cite{Bruin} (see also \cite{EdixBook}).

\item  Implementing the algorithm of Theorem \ref{DetectionThm} requires that we evaluate the determinants of large matrices over a number field $K$ where $[K:\Q]=\ord\chi$. Fast algorithms for computing the determinant of an $r\times r$ matrix require about $\mathrm{O}(r^3)$ field operations (e.g., Gaussian elimination), but over a number field, these operations are essentially operations on {\it polynomials} of degree $[K:\Q]-1$. The time and memory requirements for the na\"ive adaptation of these methods to $K$ scale poorly as the degree increases.

Some computational tricks using the Chinese remainder theorem and the theory of cyclotomic fields yield noticeable improvements to computing such determinants in practice.

\item Once the determinants of the matrices above are computed, we compute their GCD $\f{a}$ and then factor $\f{a}$ to determine candidates for nonsurjectivity (see Section \ref{S3-subroutines}). Without an estimate of the size of $\f{a}$, it is not clear how difficult this factorization problem is, but it is a necessary step if one wants to completely characterize weight $1$ forms of a given type. Our data suggest that $\mathrm{N}(\f{a})$ grows quickly in the index of $\Gamma_1(N)$ (see Section \ref{S4-growth}).

\end{itemize}

\section{Products of the Hecke stability method}\label{S4}

The Hecke stability method provides us with a systematic method for computing tables of weight $1$ modular forms. Because of the correspondence between mod $p$ modular forms and Galois representations with controlled ramification, these data are relevant to the refined inverse Galois problem for $\PXL_2(\F_q)$. In what follows we comment on some of our more remarkable findings.

All tables produced by the Hecke stability method are currently being integrated into ``The $\mathrm{L}$-functions and modular forms database'' (LMFDB). The known data are also available by request, and some tables are also provided in \cite{Dissertation}.

%\subsection{}

\subsection{Growth of torsion in $\H^1(X_1(N),\underline{\omega}(-\mathrm{cusps}))$}\label{S4-growth}

Extensive computations with the Hecke stability method have produced evidence that the torsion subgroup of $\H^1(X_1(N),\underline{\omega}(-\mathrm{cusps}))$ grows {\it at least} exponentially in the index of $\Gamma_1(N)$.

\begin{conjecture}The limit
\[\liminf_{N\rar\infty}\left(\frac{\log|\H^1(X_1(N),\underline{\omega}(-\mathrm{cusps}))_{\mathrm{tors}}|}{[\SL_2(\Z):\Gamma_1(N)]}\right)\] is nonzero.\end{conjecture}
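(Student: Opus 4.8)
We do not have a proof; the following is a plan.

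The first step would be to translate the conjecture into a statement about weight $1$ cusp forms modulo prime powers. Set $L=\underline{\omega}(-\mathrm{cusps})$ on $X=X_1(N)$ over $\Z[\tfrac1N]$. Since $X$ is flat over $\Z[\tfrac1N]$ with geometrically connected fibres, $\H^0(X,L)=\S_1(N;\Z[\tfrac1N])$ is torsion-free of rank $\rho:=\dim_\Q\S_1(N;\Q)$, and the cohomology sequences of $0\to L\xrightarrow{p^k}L\to L/p^kL\to0$ give, for each prime $p\nmid N$,
\[\mathrm{len}_{\Z_p}\!\big(\H^1(X,L)_{p\text{-tors}}\big)=\lim_{k\to\infty}\Big(\mathrm{len}_{\Z/p^k}\H^0\!\big(X_{\Z/p^k},L\big)-k\rho\Big)\cm\]
so that $\log\big|\H^1(X,L)_{\mathrm{tors}}\big|=\sum_p(\log p)\,\mathrm{len}_{\Z_p}(\H^1(X,L)_{p\text{-tors}})$. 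The conjecture is thus equivalent to a lower bound, linear in $[\SL_2(\Z):\Gamma_1(N)]$, for the total length of these torsion modules; using only the $[p]$-torsion yields the weaker sufficient condition that $\sum_p\big(\dim_{\Fbar p}\S_1(N;\Fbar p)-\rho\big)\log p$ grow linearly in the index.

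The second step would be to produce, for every sufficiently large $N$, a prime $p=p(N)$ (or a collection of such primes) together with many weight $1$ cusp forms over $\Fbar p$ that are not reductions of characteristic $0$ forms --- ideally detected with multiplicity in higher powers of $p$. By the $\Fbar p$-analogue of Deligne--Serre (Coleman--Voloch \cite{CV92}, Wiese \cite{Wiese2}) together with Serre's conjecture \cite{KW09}, the eigensystems occurring in $\S_1(N;\Fbar p)$ but not in characteristic $0$ are exactly those attached to odd irreducible $\bar\rho:\Gal(\Qbar/\Q)\to\GL_2(\Fbar p)$ unramified at $p$, of Artin conductor dividing $N$, and with projective image $\PSL_2(\F_q)$ or $\PGL_2(\F_q)$ for some $q=p^b\ge4$ (by \cite{Dickson} these are precisely the images that do not embed in $\PGL_2(\C)$, hence cannot lift to characteristic $0$ at any level; cf.\ III of the introduction). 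The plan is to take $N$ divisible by many small primes and to exhibit --- by class field theory over suitable solvable subfields, or by a counting argument in the spirit of the mass heuristics of Bhargava and Malle for extensions with prescribed Galois group and ramification --- a number of such $\bar\rho$, ramified only inside $N$, that is exponential in the index, and then to check that they survive in $\H^1(X,L)$. The dihedral forms, although numerous, lift to characteristic $0$ and so do not contribute; the content is in isolating enough {\it exotic} representations and, for the exponential bound, in understanding their contribution to {\it deep} torsion.

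This last point is the main obstacle, and it is the reason the statement is only conjectural. For each fixed $p$ the $[p]$-torsion of $\H^1(X,L)$ has dimension $\dim_{\Fbar p}\S_1(N;\Fbar p)-\rho$, bounded above via the Edixhoven sequence by $\dim\S_p(N;\Fbar p)=\mathrm{O}\!\big(p\,[\SL_2(\Z):\Gamma_1(N)]\big)$ but, experimentally, far smaller, and for each $N$ the primes at which reduction fails are finite in number. Exponential growth in the index cannot, then, come from the $[p]$-torsion of a bounded set of primes: it must be carried either by torsion occurring at a set of primes that itself grows with the index, or --- more plausibly in view of the data --- by {\it deep} torsion, cyclic summands $\Z/p^k$ with $k$ unbounded. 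The latter is governed by the failure of $\H^0\!\big(X_{\Z/p^k},L\big)$ to have length $k\rho$ for large $k$, equivalently by how far, $p$-adically and to all orders, the line bundle $\underline{\omega}(-\mathrm{cusps})$ is from being non-special along a special fibre of $X_1(N)/\Z[\tfrac1N]$. We know of no technique --- geometric, automorphic, or Galois-theoretic --- producing lower bounds of this shape for a line bundle varying in a family, and even granting the expected statistics for exotic $\PXL_2(\F_q)$-representations one does not see why they should force deep rather than merely wide torsion. A genuinely new idea is needed here.
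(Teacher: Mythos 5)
This statement is a \emph{conjecture} in the paper: the author offers no proof, only computational evidence (the table of $t'(N,\varepsilon)$ for $N=3\nu$), so there is no proof of record to compare your proposal against, and your honesty in presenting only a plan is appropriate. Your first step --- converting the torsion of $\H^1(X_1(N),\underline{\omega}(-\mathrm{cusps}))$ into lengths of $\H^0$ over $\Z/p^k$ via the sequences $0\to L\xrightarrow{p^k}L\to L/p^kL\to 0$, with the $[p]$-torsion controlled by $\dim_{\Fbar{p}}\S_1(N;\Fbar{p})-\dim_\Q\S_1(N;\Q)$ --- is the standard and correct translation, and matches the framework the paper itself uses (cf.\ its discussion of surjectivity of reduction and the quantity $t'(N,\varepsilon)$, which is exactly the $[p]$-torsion lower bound). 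Your identification of the non-liftable eigensystems with the exotic $\PXL_2(\F_q)$, $q\ge 4$, representations is likewise consistent with points (I)--(III) of the paper's introduction.

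One structural guess in your plan, however, runs against the paper's own data. You write that the linear-in-index growth is ``more plausibly, in view of the data,'' carried by \emph{deep} torsion (summands $\Z/p^k$ with $k$ unbounded). The paper's table measures only the $[p]$-torsion contribution $\sum_p(\log p)(\dim\S_1(N,\varepsilon;\F_p)-\dim\S_1(N,\varepsilon;\Q))$, and already this quantity grows linearly in the index; the mechanism is that the dimension jumps stay small (almost always $2$, forced by the Atkin--Lehner/dihedral parity argument the paper gives) while the primes $p$ at which reduction fails grow roughly exponentially in the index (the paper notes that ``the primes dividing $t'(N,\varepsilon)$ grow precipitously''). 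So the evidence points to \emph{wide} torsion at a few enormous primes per level, not deep torsion, and a viable attack on the conjecture would need to explain why the exceptional primes themselves are exponentially large --- e.g., as prime divisors of a determinant or resultant of size exponential in the index, in the spirit of the paper's Section on nonsurjectivity of reduction --- rather than why torsion should pile up $p$-adically at a fixed prime. This does not invalidate your plan, but it redirects its hardest step.
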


%In fact, there is reason to believe that asymptotic may be much stronger.
%[Akshay citationx]

The most coherent set of data relevant to the conjecture above is given by the contributions to torsion from $\S_1(N,\varepsilon;\Fbar{p})$ where $N=3\nu$ for a prime $\nu$ and $\varepsilon$ is the quadratic character of conductor $3$. These data are relatively easy to obtain because the forms are defined over $\Z$, one can take $s=1$ and $t=1$ (as defined in Section \ref{S3-HSM}), and the relevant Hecke stability hypotheses over $F$ require no extra certification step when $\mathrm{char}(F)\notin\{2,3,5,11\}$ (see Example \ref{Elliptic-Improvements}).

For $N$ divisible by $3$, set
\[t'(N,\varepsilon)=\prod_{p\nmid N\phi(N)}p^{\dim\S_1(N,\varepsilon;\F_p)-\dim\S_1(N,\varepsilon;\Q)}\pd\]
The exclusion of primes dividing $\phi(N)$ guarantees the equalities
\begin{align*}\dim\S_1(N,\varepsilon;\F_p)-\dim\S_1(N,\varepsilon;\Q)&=\dim\M_1(N,\varepsilon;\F_p)-\dim\M_1(N,\varepsilon;\Q)\\&=|\S_1(N,\varepsilon;\Fbar{p})^{\mathrm{new}}|-|\S_1(N,\varepsilon;\Qbar)^{\mathrm{new}}|\cm\end{align*}
by avoiding certain pathologies (e.g., Hecke modules in characteristics dividing $\phi(N)$ do not always admit bases consisting of Hecke eigenforms). Because the primes dividing $t'(N,\varepsilon)$ grow precipitously, excluding these small primes does not affect the magnitude of $t'(N,\varepsilon)$ overmuch.

%\begin{example}Here is the \end{example}

%that the modular ratios of our initial space $U_0$ vanish at every cusp (mod $p$) by avoiding Eisenstein congruences of type $(2,N,\boldsymbol{1})$. Because $t'(N,\varepsilon)$ is a lower bound for contribution to torsion from forms of a single nebentypus, the quantity we are concerned with replaces the index of $\Gamma_1(N)$ in the ratio of Conjecture x with the index of $\Gamma_0(N)$.
%
%When $p\nmid\phi(N)$, $\dim\S_1(N,\varepsilon;\F_p)-\dim\S_1(N,\varepsilon;\Q)$ is equal to the number of newforms of type $(1,N,\varepsilon)$ over $\Fpbar$ that do not lift to characteristic zero. When $p\mid\phi(N)$ one may see Eisenstein congruences mod $p$ or $\S_1(N,\varepsilon;\Fbar{p})$ may not have a basis of Hecke eigenforms.

Below we give a table for $t'(N,\varepsilon)$ where $N=3\nu$ and $\nu$ ranges over all primes in $[47,600]$. There are three such levels where $t'(N,\varepsilon)=1$, namely $177$, $183$, and $201$---the table skips these levels. For every level in the table, $\dim_\Q\S_1(N,\varepsilon;\Q)=0$

%, consistent with the phenomenon observed in Section \ref{S4-repulsion}.

Finally, it should be mentioned that the Hecke stability hypotheses over $\F_5$ and $\F_{11}$ were certified using method (d.)\ of Section \ref{S3-certification} (all other Hecke stability hypotheses follow from the argument in Example \ref{Elliptic-Improvements}).

\begin{center}\noindent{\footnotesize\begin{tabular}{|rlc|rlc|}\hline $N$ & $t'(N,\varepsilon)$ & {\tiny$\frac{\log t'(N,\varepsilon)}{[\SL_2(\Z):\Gamma_0(N)]}$} & $N$ & $t'(N,\varepsilon)$ & $\frac{\log t'(N,\varepsilon)}{[\SL_2(\Z):\Gamma_0(N)]}$
\\\hline$141$ & $5^2$ & $0.016765$ &  $933$ & $23^2 \cdot 11177^2 \cdot 2036539^2$ & $0.043243$\\
$159$ & $5^2$ & $0.014902$ &  $939$ & $11643684611137^2$ & $0.047907$\\
$213$ & $17^2$ & $0.019675$ &  $951$ & $13^2 \cdot 593^2 \cdot 1427^2 \cdot 1363631^2$ & $0.047703$\\
$219$ & $41^2$ & $0.025092$ &  $993$ & $1399^2 \cdot 5767763143^2$ & $0.044758$\\
$237$ & $5^2$ & $0.010059$ &  $1011$ & $47^2 \cdot 5879^2 \cdot 6004682531^2$ & $0.051842$\\
$249$ & $89^2$ & $0.026718$ &  $1041$ & $5^2 \cdot 297255711552329^2$ & $0.050194$\\
$267$ & $41^2$ & $0.020631$ &  $1047$ & $568492143207481^2$ & $0.048534$\\
$291$ & $73^2$ & $0.021890$ &  $1059$ & $293^2 \cdot 411688918531337^2$ & $0.055553$\\
$303$ & $199^2$ & $0.025948$ &  $1077$ & $61^2 \cdot 1097549^2 \cdot 17174569^2$ & $0.048164$\\
$309$ & $11^2$ & $0.011528$ &  $1101$ & $33377^2 \cdot 167296128361^2$ & $0.049264$\\
$321$ & $73^2$ & $0.019863$ &  $1119$ & $7^2 \cdot 13^2 \cdot 101^2 \cdot 1567^2 \cdot 30449^2 \cdot 75629^2$ & $0.050856$\\
$327$ & $281^2$ & $0.025629$ &  $1137$ & $5^2 \cdot 11^2 \cdot 1811^2 \cdot 7757^2 \cdot 18835005559^2$ & $0.058058$\\
$339$ & $11801^2$ & $0.041123$ &  $1149$ & $17^2 \cdot 587^2 \cdot 7459050493709^2$ & $0.050584$\\
$381$ & $13^2$ & $0.010019$ &  $1167$ & $19^2 \cdot 43531931^2 \cdot 276464041661^2$ & $0.060101$\\
$393$ & $7^2 \cdot 1669^2$ & $0.035477$ &  $1191$ & $23^2 \cdot 8219^2 \cdot 264610601669^2$ & $0.048306$\\
$411$ & $223^2 \cdot 613^2$ & $0.042846$ &  $1203$ & $17^2 \cdot 566759^2 \cdot 83439102139^2$ & $0.051279$\\
$417$ & $37^2 \cdot 227^2$ & $0.032271$ &  $1227$ & $5^2 \cdot 17^6 \cdot 23^2 \cdot 4501099^2 \cdot 106521127^2$ & $0.057376$\\
$447$ & $353^2 \cdot 937^2$ & $0.042364$ &  $1257$ & $90379^2 \cdot 6166483^2 \cdot 27175307^2$ & $0.052576$\\
$453$ & $2417^2$ & $0.025626$ &  $1263$ & $11^4 \cdot 167^4 \cdot 3359^2 \cdot 589072513^2$ & $0.051357$\\
$471$ & $4523^2$ & $0.026636$ &  $1293$ & $67^2 \cdot 19215219958141279^2$ & $0.048263$\\
$489$ & $11^2 \cdot 463^2$ & $0.026023$ &  $1299$ & $17^2 \cdot 116866832706907338779^2$ & $0.056499$\\
$501$ & $191^2 \cdot 859^2$ & $0.035738$ &  $1317$ & $5^2 \cdot 66025499784624707^2$ & $0.045839$\\
$519$ & $257^2 \cdot 523^2$ & $0.033933$ &  $1329$ & $2316157573^2 \cdot 743496822373^2$ & $0.055065$\\
$537$ & $5^2 \cdot 97^2 \cdot 8713^2$ & $0.042380$ &  $1347$ & $2311^2 \cdot 6717077^2 \cdot 1707229020033067^2$ & $0.065044$\\
$543$ & $67^2 \cdot 193^2$ & $0.026009$ &  $1371$ & $227^2 \cdot 751^2 \cdot 455999^2 \cdot 1224778807207^2$ & $0.057763$\\
$573$ & $59^2 \cdot 397^2 \cdot 439^2$ & $0.042047$ &  $1383$ & $47^2 \cdot 3108109^2 \cdot 8284637^2 \cdot 129033803^2$ & $0.057798$\\
$579$ & $67^2 \cdot 15731^2$ & $0.035743$ &  $1389$ & $23^2 \cdot 71166715339905984791^2$ & $0.052637$\\
$591$ & $29^2 \cdot 444151^2$ & $0.041341$ &  $1401$ & $5^2 \cdot 857030122343348247418273^2$ & $0.060595$\\
$597$ & $19^2 \cdot 62617^2$ & $0.034973$ &  $1437$ & $7^2 \cdot 20753^2 \cdot 3413228933067061^2$ & $0.049638$\\
$633$ & $451933^2$ & $0.030711$ &  $1461$ & $1561303^2 \cdot 68396038855089373^2$ & $0.054329$\\
$669$ & $11489^2 \cdot 48883^2$ & $0.044969$ &  $1473$ & $113^2 \cdot 525503753^2 \cdot 981045960949^2$ & $0.053272$\\
$681$ & $163316303^2$ & $0.041472$ &  $1497$ & $347095820483556600660647^2$ & $0.054204$\\
$687$ & $1140121^2$ & $0.030319$ &  $1509$ & $1259^2 \cdot 7689582211^2 \cdot 18300110183^2$ & $0.053107$\\
$699$ & $41^2 \cdot 89^2 \cdot 223^2 \cdot 1109^2$ & $0.044061$ &  $1527$ & $11^2 \cdot 732709^2 \cdot 296442544991039301013^2$ & $0.061805$\\
$717$ & $619^2 \cdot 26921^2$ & $0.034643$ &  $1563$ & $139^2 \cdot 132467773^2 \cdot 189430903^2 \cdot 2365816319^2$ & $0.061571$\\
$723$ & $770297621^2$ & $0.042277$ &  $1569$ & $53^2 \cdot 62196526144813901269147^2$ & $0.053869$\\
$753$ & $23741^2 \cdot 13669147^2$ & $0.052590$ &  $1623$ & $19^2 \cdot 14280291317898094529842523^2$ & $0.056149$\\
$771$ & $97^2 \cdot 3833^2 \cdot 11383^2$ & $0.042957$ &  $1641$ & $53^2 \cdot 73^2 \cdot 1489^2 \cdot 8052073^2 \cdot 36978604703^2$ & $0.050914$\\
$789$ & $23^2 \cdot 101879593^2$ & $0.040861$ &  $1671$ & $7^4 \cdot 71^2 \cdot 2198029^2 \cdot 5826349757405731^2$ & $0.052920$\\
$807$ & $20983^2 \cdot 887059^2$ & $0.043791$ &  $1689$ & $11^2 \cdot 68581518742288026772115454991^2$ & $0.060989$\\
$813$ & $53^2 \cdot 71^2 \cdot 51803^2$ & $0.035089$ &  $1707$ & $5^2 \cdot 277^2 \cdot 154664726857^2 \cdot 1697802067421853871^2$ & $0.065767$\\
$831$ & $13^2 \cdot 59^2 \cdot 79^2 \cdot 311603^2$ & $0.042556$ &  $1713$ & $5^2 \cdot 107^2 \cdot 70663^2 \cdot 101011208101^2 \cdot 127452545273^2$ & $0.059753$\\
$843$ & $709^2 \cdot 1471^2 \cdot 165606751^2$ & $0.058125$ &  $1731$ & $31^2 \cdot 433^2 \cdot 19159873^2 \cdot 36001051344486600557^2$ & $0.061681$\\
$849$ & $421^2 \cdot 4325322613^2$ & $0.049701$ &  $1761$ & $5^2 \cdot 11^2 \cdot 5212393^2 \cdot 121756570064236471668019^2$ & $0.061760$\\
$879$ & $19^2 \cdot 139^2 \cdot 13339^2$ & $0.029553$ &  $1779$ & $1104047325433567^2 \cdot 615184180444752239^2$ & $0.063635$\\
$921$ & $166017730847^2$ & $0.041941$ &  $1797$ & $5^2 \cdot 11^2 \cdot 755809^2 \cdot 14302275115816198137271^2$ & $0.057131$\\\hline\end{tabular}}\end{center}

\hspace{-1cm}\includegraphics[scale=1]{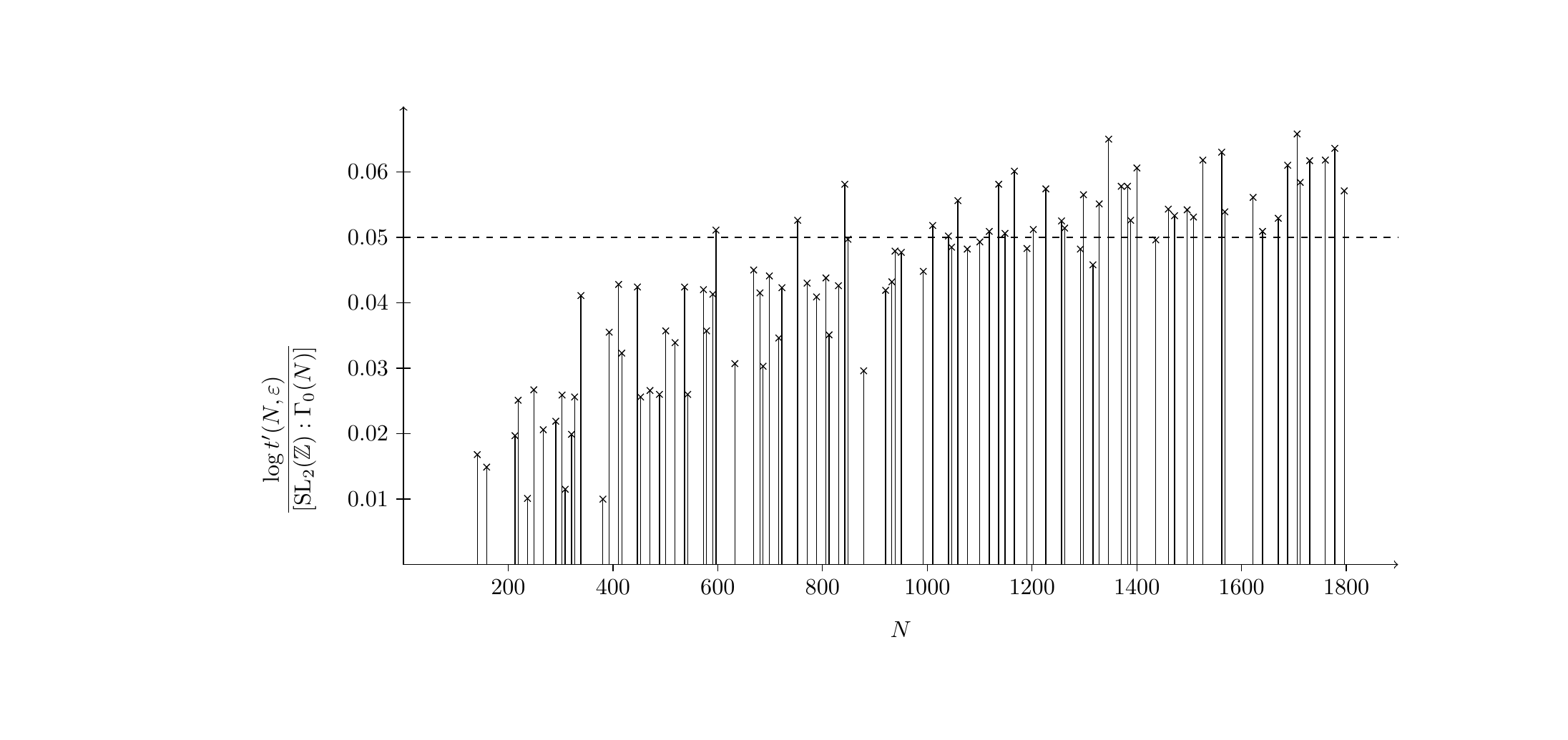}

The reader may observe that $t'(N,\varepsilon)$ is always a square in the table above. This is because the Atkin--Lehner twist is an involution on $\S_1(N,\varepsilon;\Fbar{p})$ whose fixed points are dihedral newforms of type $(N,\varepsilon)$. Because dihedral newforms always lift to characteristic zero (in the same level and character) \cite{Wiese1}, $\dim\S_1(N,\varepsilon;\Fbar{p})-\dim\S_1(N,\varepsilon;\Qbar)$ is always even. This argument generalizes:

\begin{theorem}If $\chi$ is a quadratic character of level $N$ and $p\nmid\phi(N)$, then $\dim_{\F_p}\S_1(N,\chi;\F_p)\equiv\dim_{\Q}\S_1(N,\chi;\Q)\modulo 2$.\end{theorem}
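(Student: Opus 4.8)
The plan is to exploit the involution on cuspidal weight-$1$ newforms given by twisting with the nebentypus, together with Wiese's theorem \cite{Wiese1} that weight-$1$ dihedral newforms lift to characteristic zero with the same level and character. One may assume $\chi$ is odd and nontrivial (otherwise $\S_1(N,\chi;F)=0$ in every characteristic and there is nothing to prove), so in particular $\phi(N)$ is even and $p$ is odd; one may also replace $\Q$ and $\F_p$ by $\Qbar$ and $\Fbar p$, as this changes neither dimension.

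First I would set up the involution. Fix $M$ with $\mathrm{cond}(\chi)\mid M\mid N$ and write $\chi_M$ for $\chi$ viewed modulo $M$. For a cuspidal newform $f\in\S_1(M,\chi_M;\bar F)^{\mathrm{new}}$ let $f\otimes\chi_M$ be the newform whose $\ell$-th Hecke eigenvalue is $\chi_M(\ell)\,a_\ell(f)$ for $\ell\nmid M$. Since $\chi_M$ is quadratic, $f\otimes\chi_M$ again has nebentypus $\chi_M\cdot\chi_M^2=\chi_M$, and a conductor computation at the primes dividing $M$ (Atkin--Li), using the quadraticity of $\chi_M$ and the restricted shape of the local representations attached to a weight-$1$ form, shows it again has level exactly $M$. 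As $\chi_M^2=\boldsymbol{1}$, the map $f\mapsto f\otimes\chi_M$ is an involution of the finite set of newforms of type $(M,\chi_M)$ over $\bar F$; its fixed points are exactly the $f$ with $\rho_f\otimes\chi_M\cong\rho_f$, i.e.\ with $\rho_f$ induced from the quadratic field attached to $\chi$---the \emph{dihedral} newforms---and every newform that is not fixed lies in an orbit of size $2$. Hence, over any such $\bar F$,
\[\dim_{\bar F}\S_1(M,\chi_M;\bar F)^{\mathrm{new}}\;\equiv\;\#\{\text{dihedral newforms of type }(M,\chi_M)\text{ over }\bar F\}\modulo 2.\]

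Then I would assemble the full space and invoke Wiese. Over $\Qbar$, and over $\Fbar p$ when $p\nmid\phi(N)$, the degeneracy maps give the same combinatorial identity $\dim_{\bar F}\S_1(N,\chi;\bar F)=\sum_{\mathrm{cond}(\chi)\mid M\mid N}d(N/M)\dim_{\bar F}\S_1(M,\chi_M;\bar F)^{\mathrm{new}}$, where $d(N/M)$ is the number of divisors of $N/M$; the hypothesis $p\nmid\phi(N)$ is exactly what rules out the characteristic-$p$ pathologies of this decomposition flagged just before the theorem (failure of old/new, absence of eigenform bases). Substituting the congruence above, the parity of $\dim_{\bar F}\S_1(N,\chi;\bar F)$ equals that of $\sum_M d(N/M)\cdot\#\{\text{dihedral newforms of type }(M,\chi_M)\text{ over }\bar F\}$. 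Finally, by \cite{Wiese1} every $\Fbar p$-dihedral newform is the reduction of a $\Qbar$-dihedral newform of the same type, while reduction mod $p$ carries $\Qbar$-dihedral newforms to $\Fbar p$-dihedral newforms injectively and without dimension drops (again because $p\nmid\phi(N)$); so these counts are literally equal for $\bar F=\Qbar$ and $\bar F=\Fbar p$, the two sums coincide, and $\dim_{\F_p}\S_1(N,\chi;\F_p)\equiv\dim_\Q\S_1(N,\chi;\Q)\modulo 2$.

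The argument carries no deep obstacle once \cite{Wiese1} is granted; what requires care is the characteristic-$p$ bookkeeping---the old/new decomposition and the existence of Hecke-eigenform bases for $\S_1(N,\chi;\F_p)$---which is precisely what $p\nmid\phi(N)$ secures (cf.\ the remark preceding the theorem), together with the fact, supplied by \cite{Wiese1}, that the ``extra'' mod-$p$ cusp forms responsible for any growth in dimension are never dihedral and hence always occur in twist-pairs at a common level. The one genuinely new computation is the level-preservation lemma for $f\mapsto f\otimes\chi_M$---a local conductor calculation in which the quadraticity of $\chi$ is essential---and the rest is a parity count.
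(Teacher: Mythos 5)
Your argument is correct and is essentially the proof the paper intends: the paragraph preceding the theorem gives exactly this argument --- the twist-by-$\chi$ (Atkin--Lehner twist) involution on newforms, whose fixed points are the dihedral forms, combined with Wiese's theorem that dihedral newforms lift in the same level and character --- and your old/new bookkeeping over the intermediate levels $M$ supplies the reduction from dimensions to newform counts needed for general $N$. The only cosmetic difference is that the paper realizes the involution geometrically via the Atkin--Lehner operator, which makes level-preservation automatic, whereas your eigenvalue-twist definition requires the Atkin--Li-style conductor computation you flag.
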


\subsection{Galois number fields with small root discriminant}\label{S4-roberts}

Recall that when $K$ is a number field, the quantity $\mathrm{rd}(K)=|\mathrm{disc}(K)|^{[K:\Q]}$ is called the {\it root discriminant} of $K$. Under the generalized Riemann hypothesis,
\[\lim_{n\rar\infty}\inf\set{\mathrm{rd}(K)}{K/\Q}=8\pi e^\gamma\approx 44.76323\pd\]
The constant $8\pi e^\gamma$ is sometimes referred to as the {\it Odlyzko--Serre bound} \cite{SerreOdlyzko}.

Let $\s{E}(G)$ be the set of all Galois number fields $K\subset\C$ such that $\mathrm{rd}(K)\le 8\pi e^\gamma$ and $\Gal(K/\Q)\simeq G$. It is a natural problem to determine each of the finite sets $\s{E}(G)$.
\begin{itemize}\item Jones and Roberts have shown that $\left|\bigcup_{\text{$A$ ab.}}\s{E}(A)\right|=7063$. This led them to conjecture that the set $\bigcup_G\s{E}(G)$ is finite \cite{JonesRoberts}.
\item In her thesis \cite{Wallington}, Wallington developed algorithms for analyzing $\s{E}(G)$ with $G\le\mathrm{S}_6$. Subsequently, Jones and Wallington characterized $\s{E}(G)$ for all solvable transitive $G\le\mathrm{S}_n$ with $n\le 10$, all groups $G$ of order $\le 24$, and all dihedral groups of prime degree \cite{WallingtonJones}.
\item {\it Nonsolvable} $K$ with $\mathrm{rd}(K)\le 8\pi e^\gamma$ have been discovered at a rate of about two per year \cite{Roberts}.
\end{itemize}

If $f\in\S_1(N,\chi;\Fbar{p})$, then $\mathrm{rd}(K_f)=(N/\f{f}_\chi)^{(p-1)/p}\f{f}_\chi^{1/2}$ where $K_f$ is the number field constructed from the Galois representation associated to $f$ and $\f{f}_\chi$ is the conductor of $\chi$. Bearing this formula in mind, the tables of weight $1$ modular forms mod $p$ produced by the Hecke stability method (see Appendix A of \cite{Dissertation}) reveal six new examples of $\PXL_2(\F_7)$- and $\PXL_2(\F_{11})$-extensions of $\Q$ with root discriminant $\le 8\pi e^\gamma$:

%
%In practice, Galois number fields $K$ satisfying $\mathrm{rd}(K)\le\Omega$ are rare. Jones and Roberts have shown that there are exactly $7063$ Abelian number fields satisfying this condition, and (...). Nonsolvable examples have been discovered at a rate of around two per year.
%\[\s{E}=\set{K\le\C}{\text{$K/\Q$ Galois and $|D_K|^{1/[K:\Q]}<8\pi e^\gamma$}}\]
%is actually finite. Thus, it is natural to determine for each finite group $G$, the finite set $\s{E}(G)$ of all number fields $K\in\s{E}$ such that $\Gal(K/\Q)\simeq G$.

%The tables of are relevant to the problem of 
%
%
%have revealed six previously unknown number fields in $\s{E}(G)$ where $G=\PXL_2(\F_p)$ and $p\in\{7,11\}$: If $f\in\S_1(N,\chi;\Fbar{p})$ is a newform with $N$ squarefree, then the root discriminant of the associated number field $K_f$ (as constructed in the introduction) is given by $\mathrm{rd}(K_f)=(N/\f{f}_\chi)^{(p-1)/p}\f{f}_\chi^{1/2}$. Inspection of the tables yields a number of candidates, and with some work (to verify the Galois groups) we 
\begin{center}\begin{tabular}{|lrllrc|}\hline$N$ & $\f{f}_\chi$ & rt.\ disc. & $G$ & $d$ & polynomial\\\hline
$489$ & $163$  & $32.7382$ & $\PGL_2(\F_7)$ & $-163$ & $x^8-2x^7+x^6+x^5+7x^4-14x^3+7x^2+7x+1$\\

$561$ & $187$ & $35.0656$ & $\PGL_2(\F_7)$ & $17$ & $x^8-3x^7+9x^6-21x^5+42x^4-45x^3+57x^2-75x+36$\\

$705$ & $235$ & $39.3093$ & $\PGL_2(\F_7)$ & $-235$ & $x^8-2x^7-2x^6+10x^5+7x^4-11x^3-11x^2-2x+1$ \\

$341$ & $31$ & $43.4814$ & $\PSL_2(\F_7)$ & & $x^8-3x^7-2x^6+12x^5-36x^3+62x^2-51x+18$\\

$615$ & $123$ & $44.0626$ & $\PGL_2(\F_7)$ & $-123$ & 
$x^8+x^7-5x^6-10x^5+20x^4-15x^3-105x^2-275x+15$\\\hline

$681$ & $227$ & $40.9034$ & $\PGL_2(\F_{11})$ & $-227$ & \\\hline\end{tabular}\end{center}
In the above table, $d$ is the discriminant of the quadratic subfield. The polynomials in this table were kindly provided by D.\ Roberts and the corresponding number fields have been recorded in the Number Fields database \cite{Jones}.

\subsection{Nonsolvable Galois number fields ramified at a single prime}\label{S4-gross}
	
For a finite group $G$ and a set $S$ of rational primes, let $\s{K}(G,S)$ be the set of $G$-extensions of $\Q$ unramified at primes outside of $S$.

In the 1970s, Serre proved a ``large image theorem'' for the mod $p$ Galois representations attached to level $1$ newforms. As a corollary, he showed that for every prime $\nu\ge 11$ there is a nonsolvable group $G$ such that $\s{K}(G,\{\nu\})$ is nonempty. That is, for every prime $\nu\ge 11$ there is a nonsolvable number field ramified only at $\nu$.

Gross observed in the 1990s that there were no known number fields with this property for $\nu\le 7$. Examples were subsequently found by Demb\'el\'e for $\nu=2$ \cite{Dembele} (using Hilbert modular forms), Demb\'el\'e--Greenberg--Voight for $\nu=3,5$ \cite{DGV} (using Hilbert modular forms), and Dieulefait for $\nu=7$ \cite{Dieulefait} (using Siegel modular forms).

We can give a novel solution to Gross' problem using weight $1$ modular forms over $\Fbar{p}$. The table below summarizes our solution: Using the HSM we found for each $N$ below a newform $f\in\S_1(N,\chi;\Fbar{p})$ such that the number field $K_f/\Q$ is ramified only at the single prime $\nu\mid N$ and $\Gal(K_f/\Q)\simeq G$.

{\begin{center}\begin{tabular}{|llllll|}\hline
$\nu$&$N$&$p$&$\mathrm{ord}(\chi)$&$\chi(g)$&$G$
\\\hline $2$& $256$ & $374377637683781311$& $64$ &${}^*244174677499476933$&$\mathrm{PGL}_2(\F_p)$\\
%&&&$5\mapsto 244174677499476933$\\
%&&&$+304712536046995623\sqrt{3}$\\
$3$&$243$ & $44056205122990214764331$ & $162$ & ${}^*2893275249056948748386$&$\PSL_2(\F_p)$\\
$5$&$125$ &$199$& $100$ &${}^*79$&$\PSL_2(\F_p)$\\
%X^2+120X+1
$7$&$343$ & $74873$ & $98$ & $16423$ & $\mathrm{PGL}_2(\F_p)$\\\hline
\end{tabular}\end{center}}\bigskip

The $\chi(g)$ column is meant to provide enough information to retrieve the (odd) nebentypus $\chi:(\Z/N\Z)^\times\rar\Fbar{p}^\times$ for each of these forms: In the first row, $g=5$ (determining the character up to parity), and in all other rows, $g$ is the least primitive root for $\Z/N\Z$. An entry of the form $a$ means that $\chi(g)=a$, and an entry of the form ${}^*a$ means that $\chi(g)$ is an element of $\F_{p^2}$ whose trace is equal to $a$ and whose norm is $1$.

The corresponding Hecke stability hypotheses can all be certified using condition (ii.) or (iii.) of Theorem \ref{HSThm2}. In each case, we took $\Lambda=\{\lambda_{\chi^{-1}}\}$ where $\lambda_{\chi^{-1}}$ is the unique normalized weight $1$ Eisenstein series of character $\chi^{-1}$ and we took $\ell$ to be the least prime not dividing $N$.

\begin{itemize}\item When $N=256,243$ the elementary bound $|\delta_{N,N}\mathrm{Z}(\lambda_{\chi^{-1}})|\le gX_0(N)$ suffices to apply condition (iii.) of Theorem \ref{HSThm2} since $p$ is rather large compared to $N$ in these cases.

\item For $N=125,343$, the set $\delta_{N,1}\mathrm{Z}(\lambda_{\chi^{-1}})$ was computed directly using the method of Remark \ref{Zeros-Remark} over $\Fbar{p}$; in both cases, $\lambda_{\chi^{-1}}$ was found to have no supersingular non-elliptic zeros.\end{itemize}

Once newforms of these types were computed to sufficient precision, the Galois groups $G$ were identified in a manner similar to that found in \cite{Buz12}.

%\subsection{Hecke stability in other settings}{S4-extensions}
%
%One advantage of the Hecke stability method is that the basic idea is not limited to the computation of weight $1$ Katz modular forms. 

%%%%%%%%%%%%%%%%%%
%%%%%%%%%%%%%%%%%%
%%%%%%%%%%%%%%%%%%
%%%%%%%%%%%%%%%%%%
%%%%%%%%%%%%%%%%%%
%%%%%%%%%%%%%%%%%%
%%%%%%%%%%%%%%%%%%
%%%%%%%%%%%%%%%%%%
%%%%%%%%%%%%%%%%%%
%%%%%%%%%%%%%%%%%%
%%%%%%%%%%%%%%%%%%
%%%%%%%%%%%%%%%%%%
%%%%%%%%%%%%%%%%%%
%%%%%%%%%%%%%%%%%%
%%%%%%%%%%%%%%%%%%
%%%%%%%%%%%%%%%%%%
%%%%%%%%%%%%%%%%%%
%%%%%%%%%%%%%%%%%%
%%%%%%%%%%%%%%%%%%
%%%%%%%%%%%%%%%%%%
%%%%%%%%%%%%%%%%%%
%%%%%%%%%%%%%%%%%%
%%%%%%%%%%%%%%%%%%
%%%%%%%%%%%%%%%%%%
%%%%%%%%%%%%%%%%%%


\begin{thebibliography}{99}
\addcontentsline{toc}{chapter}{References}




%\bibitem[And09]{Andrianov}A.\ Andrianov. {\sl Introduction to Siegel Modular Forms and Dirichlet Series}. Springer--Verlag, 2009.

%\bibitem[AL78]{AL78}A.\ O.\ L.\ Atkin and W.-C.\ W.\ Li. ``Twists of newforms and pseudo-eigenvalues of $W$-operators.'' {\sl Inv.\ Math.}, {\bf 48} (1978) 221--243.
%
%%\bibitem[BV10]{BV}N. Bergeron, A. Venkatesh. ``The asymptotic growth of torsion homology for arithmetic groups.'' Preprint, 2010.
%
%%\bibitem[Bha10]{Bhargava}M. Bhargava. ``The density of discriminants of quintic rings and fields.'' {\sl Ann. Math.}, {\bf 172} (2010) 1559--1591.
%
%\bibitem[BG09]{BE}M. Bhargava, E. Ghate. ``On the average number of octahedral forms of prime level.'' {\sl Mathematische Ann.}, {\bf 344} (2009) 749--768.
%
%\bibitem[Bis11a]{Bisson1}G. Bisson. ``Computing endomorphism rings of elliptic curves under the GRH.'' {\sl J.\ of Math.\ Crypto.} (2011).
%
%\bibitem[Bis11b]{Bisson2}G. Bisson. {\sl Endomorphism Rings in Cryptography.} PhD Thesis, Eindhoven University of Technology, 2011.
%
\bibitem[BLS10]{BLS}R. Br\"oker, K. Lauter, A. V. Sutherland. ``Modular polynomials via isogeny volcanoes.'' arXiv 1001.0402, 2010.
%
%%\bibitem[BK75]{Antwerp}B.\ Birch, W.\ Kuyk. {\sl Modular functions of one variable IV}. Springer, 1975.
%
%\bibitem[BS11]{BS}G.\ Bisson, A.\ V.\ Sutherland. ``Computing the endomorphism ring of an ordinary elliptic curve over a finite field.'' {\sl J.\ Number Th.}, {\bf 131} (2011) 815--831.
%
%\bibitem[Boy00]{SDTypeCongruences}M.\ Boylan. ``Swinnerton-Dyer Type Congruences
%for certain Eisenstein series'' in {\sl q-series with applications to Combinatorics, Number Theory, and Physics}. {\sl Contemporary Mathematics}, {\bf 291}, Amer.\ Math.\ Soc.
%
%\bibitem[Buh78]{Buhler}J.\ P.\ Buhler. {\sl Icosahedral Galois Representations}. Springer--Verlag, 1978.
%
\bibitem[Buz12]{Buz12}K.\ Buzzard. ``Computing weight one modular forms over $\C$ and $\Fbar{p}$.'' {\sl Proceedings of the Summer School and Conference, ``Computations with Modular Forms 2011}. Springer--Verlag.
%
\bibitem[Bru10]{Bruin}P.\ J.\ Bruin. {\sl Modular curves, Arakelov theory, algorithmic applications}. PhD Thesis, University of Leiden, 2010.
%
%%\bibitem[CV11]{TorsionJL}F. Calegari, A. Venkatesh. {\sl Towards a Torsion Jacquet-Langlands correspondence}. In preparation.
%
%%\bibitem[CE11]{CalEmerton}F. Calegari, M. Emerton. ``Mod-$p$ Cohomology Growth in $p$-adic Analytic Towers of $3$-Manifolds.'' To appear in {\sl Groups, Geometry and Dynamics}.
%
%\bibitem[CGL07]{CGL}D. X.\ Charles, E. Z.\ Goren, K. E.\ Lauter. ``Cryptographic hash functions from expander graphs.'' Technical report, Microsoft R., 2007.
%
%\bibitem[CKW11]{ModuloPrimePowers}. I.\ Chen, I.\ Kiming, G.\ Wiese. ``On modular Galois representations modulo prime powers.'' arXiv 1105.1918, 2011.
%
\bibitem[CV92]{CV92}R. Coleman, J.-F.\ Voloch. ``Companion forms and Kodaira--Spencer theory.'' {\sl Inv. Math.}, {\bf 110} (1992) 263--28.
%
%\bibitem[Con06]{Conrad}B. Conrad. ``Arithmetic Moduli of Generalized Elliptic Curves.'' {\sl J. Inst. Math. Jussieu}, {\bf 6} (2006) 209--278.
%
%\bibitem[Cox89]{Cox}D.\ A.\ Cox. {\sl Primes of Form $x^2+ny^2$}. Wiley \& Sons, 1989.
%
\bibitem[Dem08]{Dembele}L. Demb\'el\'e. ``A non-solvable Galois extension of ramified at $2$ only.'' {\sl Comptes Rendus Math.}, {\bf 347} (2008) 111--116.
%
\bibitem[DGV09]{DGV}L. Demb\'el\'e, M. Greenberg, J. Voight. ``Nonsolvable number fields ramified only at $3$ and $5$.'' Preprint, 2009.
%
\bibitem[Del68]{Del68}P.\ Deligne. ``Formes modulaires et repr\'esentations $\ell$-adiques.'' {\sl Sem.\ Bourbaki}, {\bf 355} (1968--69). {\sl Lecture Notes in Math.}, {\bf 179} (1971) 136--172.
%
%\bibitem[Del74]{DeligneWeilConjectures}P.\ Deligne. ``La conjecture de Weil. I.'' Publ.\ Math.\ I.H.E.S., {\bf 43} (1974) 273--307.
%
\bibitem[DS74]{DS74}P.\ Deligne, J.-P.\ Serre. ``Formes modulaires de poids 1.'' {\sl Annales scientifiques de l'\'E. N. S., 4$^{\text{{\sl e}}}$ s\'erie}, {\bf 7} (1974) 507--530.
%
%\bibitem[DI95]{DiamondIm}F.\ Diamond, J.\ Im. ``Modular forms and modular curves.'' {\sl Seminar on Fermat's Last Theorem}, Amer.\ Math.\ Soc.\ (1995) 39--133.
%
%\bibitem[DR73]{DR}P.\ Deligne, M.\ Rapoport. ``Les sch\'emas de modules des courbes elliptiques.'' Springer Lecture notes in Math. {\bf 349} (1973) 143--316.
%
\bibitem[DS05]{DS05}F.\ Diamond, J.\ M.\ Shurman. {\sl A First Course in Modular Forms}. Springer--Verlag, 2005.
%
\bibitem[Dic01]{Dickson}L.\ E.\ Dicskon. {\sl Linear groups, with an exposition of the Galois field theory}. Teubner, Leipzig, 1901.
%
\bibitem[Die12]{Dieulefait}L.\ V.\ Dieulefait. ``A non-solvable extension of $\Q$ unramified outside $7$.'' {\sl Compositio Math.}, {\bf 148} (2012) 669--674.

\bibitem[Edi06]{Edi06}B.\ Edixhoven. ``Comparison on integral structures of modular forms of weight two, and computation of spaces of forms mod $2$ of weight one. With appendices by Jean-Fran\c{c}ois Mestre and Gabor Wiese.'' {\sl J.\ Inst.\ Math.\ Jussieu}, {\bf 5} (2006) 1--34.
%
\bibitem[EC06]{EdixBook}B.\ Edixhoven, J.-M.\ Couveignes, ed's. {\sl Computational Aspects of Modular Forms and Galois Representations}. Princeton University Press (2011).

%\bibitem[Gek01]{Gekeler}E.-U.\ Gekeler. ``Some observations on the arithmetic of Eisenstein series for the modular group $\SL(2,\Z)$.'' {\sl Arch.\ Math.}, {\bf 77} (2001) 5--21.
%
%%\bibitem[Ghi09]{Ghitza}A.\ Ghitza. ``Some aspects of Siegel modular forms (mod $p$).'' Talk, Universit\"at Siegen, 2009.
%%http://128.250.32.118:8371/files/Some_aspects_of_Siegel_modular_forms_mod_p.pdf
%
%\bibitem[Gor02]{Goren}E.\ Z.\ Goren. {\sl Lectures on Hilbert modular varieties and modular forms}. Centre de Recherches Math\'ematiques, 2002.
%
\bibitem[GL04]{GorenLauter}E.\ Z.\ Goren, K. E.\ Lauter. ``Class invariants for quartic CM fields.'' Technical report, Microsoft R., 2004.
%
%%\bibitem[Hah07]{Hahn}H.\ Hahn. ``On the zeros of Eisenstein series for genus zero Fuchsian groups.'' {\sl Proc. Am. Math. Soc.}, {\bf 135} (2007) 2391--2401.
%
%\bibitem[Hij74]{Hijikata}H.\ Hijikata. ``Explicit formula of the traces of Hecke operators for $\Gamma_0(N)$.'' {\sl J.\ of Math.\ Soc. of Japan}, {\bf 26} (1974) 56--82.
%
%\bibitem[JD11]{Jao}D.\ Jao, L.\ De Feo. ``Towards quantum-resistant cryptosystems from supersingular elliptic curve isogenies.'' 2011.
%
\bibitem[Jones]{Jones}J.\ W.\ Jones. ``Number Fields.'' Tables. {\tt http://hobbes.la.asu.edu/NFDB/}.

\bibitem[JR07]{JonesRoberts}J.\ W.\ Jones, D.\ P.\ Roberts. ``Galois number fields with small root discriminant.'' {\it J.\ Number Theory} {\bf 122} (2007) 379--409.
%
%\bibitem[Kan11]{Kani}E.\ Kani. ``Binary theta series and modular forms with
%complex multiplication.'' Preprint, 2011.
%

\bibitem[JW10]{WallingtonJones}J.\ W.\ Jones, R.\ Q.\ Wallington. ``Number fields with solvable Galois groups and small Galois root discriminants.'' {\it Math.\ Comp.} {\bf 81} (2012) 555--567.

\bibitem[Kat72]{Katz}N.\ M.\ Katz. {\sl $p$-adic Properties of Modular Schemes and Modular Forms}. International Summer School on Modular Forms. Antwerp, 1972.
%
%\bibitem[KM85]{KatzMazur}N.\ M.\ Katz, B.\ Mazur. {\sl Arithmetic Moduli of Elliptic Curves}. Princeton University Press, 1985.
%
\bibitem[Kha06]{Kha06}C.\ Khare. ``Serre's modularity conjecture: The level one case,'' {\sl Duke.\ Math.\ J.}, {\bf 134} (2006) 557--589.

\bibitem[Kha07]{Kha07}C.\ Khare. ``Modularity of Galois representations and motives with good reduction properties.'' {\sl J.\ Ramanujan Math.\ Soc.}, {\bf 22} (2007) 1--26.
%
\bibitem[KW09]{KW09}C.\ Khare, J.-P.\ Wintenberger. ``Serre's modularity conjecture.'' {\sl Inv.\ Math.}, {\bf 178} (2009) 485--586.
%
%\bibitem[Khu09]{KhuriMakdisi}K.\ Khuri-Makdisi. {\sl Moduli interpretation of Eisenstein series}. Preprint, 2009.
%
%\bibitem[Kil08]{Kilford}L.\ Kilford. {\sl Modular Forms: A Classical and Computational Introduction}. Imperial College Press, 2008.
%
%\bibitem[Koh96]{Kohel}D.\ Kohel. {\sl Endomorphism Rings of Elliptic Curves over Finite Fields}. PhD Thesis, University of California, Berkeley, 1996.
%
%\bibitem[LC04]{CharlesLauter}K.\ Lauter, D.\ Charles. ``Computing Modular Polynomials.'' Technical report, Microsoft Research, 2004.
%
%\bibitem[Mar06]{FrancoisMartin}F.\ Martin. ``P\'eriodes de formes modulaires de poids $1$.'' {\sl J.\ Number Th.}, {\bf 116} (2006) 399--442.
%
%\bibitem[Mas91]{Massey}W.\ S.\ Massey. {\sl A basic course in algebraic topology}. Springer--Verlag, 1991.
%
\bibitem[Mes11]{Mes11}J.-F.\ Mestre. ``The Method of Graphs. Examples and Applications.'' Notes, 2011. Translated by A.\ Jorza.
%
%\bibitem[MV02]{MV}P.\ Michel, A.\ Venkatesh. ``On the dimension of the space of cusp forms associated to $2$-dimensional complex Galois representations.'' {\sl Int. Math. Res. Not.}, {\bf 38} (2002) 2021--2027.
%
%%\bibitem[Mie07]{Shizegumi}Tsuyoshi Miezaki, Hiroshi Nozaki, \& Junichi Shizegumi. ``On the zeros of Eisenstein series for $\Gamma_0^{*}(2)$ and $\Gamma_0^{*}(3)$.'' {\sl J. Math. Soc. Japan}, {\bf 59} (2007) 693--706.
%
%\bibitem[Mil97]{Milne}J.\ S.\ Milne. {\sl Modular Functions and Modular Forms}. Notes.
%
%\bibitem[NP00]{Nekovar}J.\ Nekov\'a\v r, A.\ Plater. ``On the parity of ranks of Selmer groups.'' {\sl Asian J.\ Math.}, {\bf 4} (2000) 437--498.
%f8
%\bibitem[Pra95]{Prasad}D.\ Prasad. ``Ribet's Theorem: Shimura--Taniyama--Weil implies Fermat.'' {\sl Seminar on Fermat's Last Theorem}, Can.\ Math.\ Soc.\ Conference Proceedings {\bf 17} (1995) 155--178.
%
%\bibitem[Ram16]{Ramanujan}S.\ Ramanujan. ``On certain arithmetical functions.'' {\sl Trans.\ Cambridge\ Philos.\ Soc.}, {\bf 9} (1916) 159-Ð184.
%
%\bibitem[RS70]{RS}F.\ K.\ C.\ Rankin, H.\ P.\ F.\ Swinnerton-Dyer. ``On the zeros of Eisenstein series.'' {\sl Bull. London Math.}, {\bf 2} (1970) 169--170
%
%\bibitem[RS11]{RibetStein}K.\ A.\ Ribet, W.\ A.\ Stein. {\sl Lectures on Modular Forms and Hecke Operators}. 2011.
%
%\bibitem[Rob11]{Robert}D. Robert. ``Abelian varieties, theta functions and
%cryptography.'' Talk, Luminy 2011.
%
\bibitem[Rob08]{Rob08}D.\ P.\  Roberts. ``Chebyshev covers and exceptional number fields.'' Preprint, 2008.

\bibitem[Rob13]{Roberts}D.\ P.\  Roberts. Personal communication, November 2013.
%
%\bibitem[Sar90]{Sarnak}P. Sarnak. {\sl Some applications of modular forms}. Cambridge U. Press (1990).
%
\bibitem[Sch12]{Dissertation}G.\ J.\ Schaeffer. {\sl The Hecke Stability Method and Ethereal Forms}. PhD Thesis, University of California, Berkeley, 2012.
%
\bibitem[Ser72]{Serre}J.-P. Serre. ``Propri\'et\'es galoisiennes des points d'ordre fini des courbes elliptiques.'' {\sl Invent. Math.}, {\bf 15} (1972) 259--331.

\bibitem[Ser75]{SerreOdlyzko}J.-P. Serre. ``Minoration de discriminants,'' note of October 1975. {\sl \OE uvres}, Springer--Verlag. Volume III, 1972--1984.
%
%\bibitem[Ser81]{SerreQuelques}J.-P. Serre. ``Quelques applications du th\'eor\`eme de densit\'e de Chebotarev.'' {\sl Publ.\ Math.\ I.\ H.\ E.\ S.},	 {\bf 54} (1981) 123--201.
%
%\bibitem[Ser03]{Trees}J.-P. Serre. {\sl Trees}. Springer--Verlag, 2003.

\bibitem[Shi71]{Shi71}G. Shimura. {\sl Introduction to the arithmetic theory of automorphic functions}. Publ. of Math. Soc. of Japan, {\bf 11} (1971).

%\bibitem[Sil94]{SilvermanII}J.\ H.\ Silverman. {\sl Advanced Topics in the Arithmetic of Elliptic Curves}. Springer--Verlag, 1994.
%
%\bibitem[Sil09]{Silverman}J.\ H.\ Silverman. {\sl The Arithmetic of Elliptic Curves}. Springer--Verlag, 2009.
%
%%\bibitem[Sor10]{Sorensen}C.\ M.\ Sorensen. ``Galois representations attached to Hilbert-Siegel modular forms.'' {\sl Doc.\ Math.}, {\bf 15} (2010) 623--670.
%
\bibitem[Ste05]{Ste05}W.\ Stein. ``Computing with modular forms.'' Course notes, Harvard University, 2004. 

\bibitem[Wal09]{Wallington}R.\ Q.\ Wallington. {\sl Number fields with solvable Galois groups and small Galois root discriminants}. PhD Thesis, Arizona State University, 2009.


%
%\bibitem[Ste07]{Stein2}W.\ Stein. {\sl Modular Forms: A Computational Approach}. AMS Grad. Studies in Math., 2007.
%
%\bibitem[{\sc Sage}]{Sage}W.\ Stein, et al. {\sc Sage} mathematics software (Version 4.8), The {\sc Sage} Development Team, 2012, {\tt http://www.sagemath.org}.
%
%\bibitem[VE10]{VE}A. Venkatesh, J. S.\ Ellenberg. ``Statistics of number fields and function fields.'' {\sl Proceedings of the International Congress of Mathematicians}, Hyderabad, India, 2010.
%
\bibitem[Wie04]{Wiese1}G. Wiese. ``Dihedral Galois representations
and Katz modular forms.'' {\sl Doc. Math.}, {\bf 9} (2004) 123--133.

\bibitem[Wie11]{Wiese2}G. Wiese. ``On Galois representations of weight one.'' arXiv 1102.2302, 2011.
%
%\bibitem[Wil95]{Wiles}A. Wiles. ``Modular elliptic curves and Fermat's last theorem.'' {\sl Ann. Math.}, {\bf 141} (1995) 443--551.

\end{thebibliography}
\end{document}